\documentclass[12pt,reqno]{amsart}

\newcommand\version{October 20, 2026}

\usepackage{amsmath,amsfonts,amsthm,amssymb,amsxtra}
\usepackage{hyperref}
\usepackage{bbm} % to get \1
\usepackage{bm} % to get \bm
\usepackage{accents} % to get double tilde
\newtheorem{theorem}{Theorem}[section]
\newtheorem{proposition}[theorem]{Proposition}
\newtheorem{lemma}[theorem]{Lemma}

\newtheorem*{problem*}{Problem}

\theoremstyle{definition}

\theoremstyle{remark}

\newtheorem{remark}[theorem]{Remark}

\numberwithin{equation}{section}

\newcommand{\B}{\mathcal{B}}

\newcommand{\C}{\mathbb{C}}

\newcommand{\D}{\mathcal{D}}

\renewcommand{\epsilon}{\varepsilon}

\newcommand{\N}{\mathbb{N}}

\renewcommand{\phi}{\varphi}
\newcommand{\R}{\mathbb{R}}

\DeclareMathOperator{\ad}{ad}

\DeclareMathOperator{\im}{Im}

\DeclareMathOperator{\ran}{ran}
\DeclareMathOperator{\re}{Re}
\DeclareMathOperator*{\slim}{s-lim}

\DeclareMathOperator{\supp}{supp}

\def\ce{\mathcal{E}}

\newcommand{\me}[1]{\mathrm{e}^{#1}}
\newcommand{\one}{\mathbf{1}}

\begin{document}

\title[Maximal velocity estimates for Hardy operators --- \version]{Maximal velocity estimates for Hardy operators}

\author[A. Ishida]{Atsuhide Ishida}
\address[Atsuhide Ishida]{Katsushika Division, Institute of Arts and Sciences, Tokyo University of Science, 6-3-1 Niijuku, Katsushika-ku, Tokyo 125-8585, Japan}
\email{aishida@rs.tus.ac.jp}

\author[K. Merz]{Konstantin Merz}
\address[Konstantin Merz]{Institute for Theoretical Physics, ETH Zurich, Wolfgang-Pauli-Strasse 27, 8093 Zurich, Switzerland, and Institut f\"ur Analysis und Algebra, Technische Universit\"at Braunschweig, Universit\"atsplatz 2, 38106 Braun\-schweig, Germany}
\email{k.merz@tu-bs.de, k.merz@phys.ethz.ch}

% 35Q40 PDEs in connection with quantum mechanics
% 46N50 Applications of functional analysis in quantum physics
% 47A40 Scattering theory of linear operators 
% 35P25 Scattering theory for PDEs 
% 81U05 2-body potential quantum scattering theory

\subjclass[2020]{35Q40,46N50,47A40,35P25,81U05}
\keywords{Hardy inequality, Hardy operator, scattering theory, wave operator, asymptotic completeness}

\date{\version}

\begin{abstract}
  We prove maximal velocity estimates for the fractional Laplacian with an added subcritical Hardy potential, called Hardy operator, and possibly external potentials. As an application, we give another proof of the nonexistence of the wave operator intertwining Hardy operators with and without long-range perturbations with Coulomb decay.
\end{abstract}

\maketitle
% \setcounter{tocdepth}{1}
% \tableofcontents

%%%%%%%%%%%%%%%%%%%%%%%%%%%%%%%%%%%%%%%%
%%%%%%%%%%%%%%%%%%%%%%%%%%%%%%%%%%%%%%%%

\section{Introduction and main results}

Let $d\in\N=\{1,2,...\}$, $\alpha\in(0,d)\cap(0,2]$, and $D=-i\nabla$. Let
\begin{align}
  \kappa_{\rm c} = \kappa_{\rm c}(d,\alpha) = \frac{2^\alpha\Gamma\left(\frac{d+\alpha}{4}\right)^2}{\Gamma\left(\frac{d-\alpha}{4}\right)^2}
\end{align}
be the sharp constant in the Hardy--Kato--Herbst inequality \cite{Kato1966,Herbst1977}, for short Hardy inequality, which states
\begin{align}
  |D|^\alpha \geq \kappa_{\rm c}|x|^{-\alpha}
\end{align}
in the sense of quadratic forms on $L^2(\R^d)$; see also \cite{Kovalenkoetal1981,Yafaev1999,Franketal2008H,FrankSeiringer2008,Bogdanetal2016,BogdanMerz2024} for different proofs. For $\kappa\leq\kappa_{\rm c}$, we consider the Friedrichs extension
\begin{align}
  L_\kappa = |D|^{\alpha}-\frac{\kappa}{|x|^\alpha} \quad \text{in} \ L^2(\R^d)
\end{align}
of the corresponding expression on $C_c^\infty(\R^d)$. When $\kappa<\kappa_{\rm c}$, then its form domain is the inhomogeneous Sobolev space $H^{\alpha/2}(\R^d)$.
We call $\kappa/|x|^\alpha$ the Hardy potential and $L_\kappa$ the Hardy operator. If $\alpha=1$, $L_\kappa$ is an important model in relativistic quantum mechanics, where it describes, e.g., electrons of a heavy atom in the vicinity of the nucleus, see, e.g., \cite{Franketal2008,Franketal2020P}, or in the study of gravitational collapse of stars, see, e.g., \cite{Chandrasekhar1931,LiebThirring1984,LiebYau1987,LiebYau1988,FrohlichLenzmann2007D}.

In \cite{IshidaMerz2026}, we study the time-dependent wave operators intertwining $L_0$ and $L_\kappa$ with possibly added external perturbations. In particular, we show non-existence of wave operators when the external potentials of the ``perturbed'' Hamiltonian involve long-range perturbations compared to the ``reference'' or ``free'' Hamiltonian. Our first attempts used a (time-integrated) {\em maximal velocity estimate}. As we eventually found a more streamlined approach and the proof of this maximal velocity estimate is not short, we did neither formally state, nor prove it in \cite{IshidaMerz2026}. However, as this estimate might be useful in other aspects of scattering theory for $L_\kappa$, in particular for three or more particles---see the foundational works~\cite{SigalSoffer1987,Graf1990,Derezinski1993} in the local (nonrelativistic) case $\alpha=2$---we shall state and prove it in this paper. We will also show how it can be used to prove the nonexistence of wave operators intertwining $L_\kappa$ with and without potentials decaying like the Coulomb potential. Stronger results using different propagation estimates are proved in \cite{IshidaMerz2026}.

To state our results, we introduce some notation and assumptions. We write $\langle x\rangle:=(1+|x|^2)^{1/2}$ for $x\in\R^d$ or a selfadjoint operator. Since all potentials considered here are real-valued, we just write $V\in L^\infty(\R^d)$ instead of $V\in L^\infty(\R^d;\R)$.
Some of our statements use the following smoothness assumptions
\begin{align}
  \label{ass5}
  |\partial_x^\beta V(x)| \lesssim \langle x\rangle^{-\rho-|\beta|}, \quad \rho>0, \ \beta\in\N_0^d.
\end{align}
Here and below, we write $A\lesssim B$ for functions $A,B\geq0$ to indicate that there is a constant $c$ such that $A\leq c B$. If $c$ depends on a parameter $\tau$, we may write $A\lesssim_\tau B$, but the dependence on $d,\alpha,\kappa$ is usually ignored in the notation. The notation $A\sim B$ means $A\lesssim B\lesssim A$. Furthermore, $\|A\|$ denotes the operator norm of a linear operator $A$ on a Hilbert space.
Further, we denote the characteristic function of a set $\Omega$ by
\begin{align*}
  x \mapsto
  \one_\Omega(x) :=
  \begin{cases}
    1 & \quad \text{for} \ x \in \Omega, \\
    0 & \quad \text{for} \ x \notin \Omega.
  \end{cases}
\end{align*}
For a selfadjoint operator, we use the same notation for the corresponding spectral projection.
For $x,\theta\in\R$, we sometimes write $\one(x\leq\theta)$ or $\one_{x\leq\theta}$ to mean $\one_{(-\infty,\theta]}(x)$.

%%%%%%%%%%%%%%%%%%

\subsection{Maximal velocity estimates}

We now state our first main result.

\begin{theorem}
  \label{the7}
  Let $d\in\N$, $\alpha\in[1,2]\cap(2-d/2,d)$, and $\kappa<\kappa_{\rm c}$. If $\alpha=1$, assume in addition $d\geq5$. Assume $V\in L^\infty(\R^d)$ obeys \eqref{ass5} for $\beta=0$ and some $\rho>0$. Let $H_\kappa=L_\kappa-V$.
  Then for every $f\in C_c^\infty(0,\infty)$, there is $\Theta_0=\Theta_0(d,\alpha,\kappa,\rho,f,V)>0$ such that for all $\gamma>1/2$, $\Theta \geq \Theta_0$, and $\phi \in\D(\log(2+\langle x\rangle)^\gamma)$,
  \begin{equation}
    \label{eq:the7}
    \int_1^\infty \left\|\one\left(\frac{|x|}{t}\geq\Theta\right) f(H_\kappa) \me{-i t H_\kappa}\phi\right\|^2 \, \frac{dt}{t}
    \lesssim_{\Theta_0,\gamma} \left\|\left(\log(2+\langle x\rangle)\right)^\gamma \phi \right\|^2.
  \end{equation}
  Moreover, for all $\Theta\geq2\Theta_0$,
  \begin{equation}
    \label{eq:the7cor}
    \lim_{t\rightarrow\infty}\left\|\one\left(\frac{|x|}{t}\geq\Theta\right)f(H_\kappa)\me{-itH_\kappa}\phi\right\|=0, \quad \phi\in L^2(\R^d).
  \end{equation}
\end{theorem}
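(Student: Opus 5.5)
The plan is the standard Sigal--Soffer/Skibsted propagation-observable argument, adapted to the nonlocal operator $L_\kappa$. Fix $\Theta\ge\Theta_0$ and a smooth nondecreasing $\chi:\R\to[0,1]$ with $\chi=0$ on $(-\infty,1/2]$ and $\chi=1$ on $[1,\infty)$. For $s>0$, put $\chi_s(r)=\chi(r)\,(1+\log\langle r s\rangle)^{-2\gamma}$, or a comparable logarithmic weight. Consider the time-dependent observable
\begin{align*}
  \Phi(t) = f(H_\kappa)\,\chi\Bigl(\frac{\langle x\rangle}{\Theta t}\Bigr)\,f(H_\kappa),
\end{align*}
possibly dressed by a factor $\log(2+\langle x\rangle/t)^{-2\gamma}$, as explained below. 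Compute the Heisenberg derivative
\begin{align*}
  \mathbb{D}\Phi = \partial_t\Phi + i[H_\kappa,\Phi].
\end{align*}
The time derivative gives the negative term $-\frac{\langle x\rangle}{\Theta t^2}\chi'$. The commutator $i[|D|^\alpha,\chi(\langle x\rangle/\Theta t)]$ is, to leading order, $\frac{1}{\Theta t}\,\chi'^{1/2}\,\bigl(\tfrac{x}{\langle x\rangle}\cdot\nabla_\xi|\xi|^\alpha\bigr)^{\rm sym}\,\chi'^{1/2}$, plus errors of order $t^{-2}$. The potentials $V$ and $\kappa|x|^{-\alpha}$ are multiplication operators and commute with $\chi$, so they only enter through $f(H_\kappa)$.

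The key estimate is then
\begin{align*}
  f(H_\kappa)\,\bigl|\nabla_\xi|D|^\alpha\bigr|\,f(H_\kappa)\le C_f,
\end{align*}
where $|\nabla_\xi|\xi|^\alpha| = \alpha|\xi|^{\alpha-1}$. The group velocity is therefore bounded on the energy window. To prove this I will use $\alpha\ge1$ and show that $|D|^{\alpha-1}f(H_\kappa)$ is bounded. Since $|D|^{\alpha-1}\lesssim 1+|D|^\alpha$, it suffices to have $\||D|^\alpha f(H_\kappa)\|<\infty$. That is, the operator domain of $H_\kappa$ must be contained in $H^\alpha$, which amounts to $|x|^{-\alpha}$ being $|D|^\alpha$-bounded in operator sense. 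This is exactly where the restrictions $\alpha>2-d/2$ and $d\ge5$ for $\alpha=1$ should enter. Presumably the paper has earlier results on $\D(L_\kappa)=H^\alpha$, or on equivalence of Sobolev norms $\||D|^{s}\phi\|\sim\|L_\kappa^{s/\alpha}\phi\|$ for $s$ in a range, as in Frank--Merz--Siedentop; I will invoke such a result. With this, $\mathbb{D}\Phi\le -\frac{c}{t}\,f\,\tilde\chi\,f + O(t^{-1-\epsilon})$ for $\Theta\ge\Theta_0:=2C_f+1$, where $\tilde\chi=\frac{\langle x\rangle}{\Theta t}\chi'\ge0$.

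The main obstacle is controlling the nonlocal commutator errors. For $\alpha<2$, $[|D|^\alpha,\chi]$ is not a first-order differential operator, so I would use a Helffer--Sjöstrand or Fourier-type expansion. Writing $|D|^\alpha=(D^2)^{\alpha/2}$ via the Balakrishnan integral $\int_0^\infty \lambda^{\alpha/2-1}D^2(D^2+\lambda)^{-1}\,d\lambda$ and commuting resolvents with $\chi(\langle x\rangle/\Theta t)$, the first commutator gives the leading term. The double commutators are $O(t^{-2})$ once they are sandwiched by $f(H_\kappa)$, which absorbs the high frequencies. One also needs $[f(H_\kappa),\chi]=O(t^{-1})$, again via Helffer--Sjöstrand and the boundedness of $[H_\kappa,\chi](H_\kappa+i)^{-1}$. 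These $O(t^{-2})$ errors are integrable, but producing the logarithmic weight requires more care. Dividing the observable by a slowly varying factor such as $\log(2+t)^{-2\gamma}$ fails. Instead I would take a family $\chi_\epsilon$ and integrate $\langle\Phi\rangle$ against $\frac{dt}{t}$, using the initial value $\langle\phi,\Phi(1)\phi\rangle$. This initial value is controlled by $\|\chi(\langle x\rangle/\Theta)^{1/2}f\phi\|^2$, which only gives $\|\phi\|^2$. The $\log^\gamma$ weight then arises from summing a dyadic decomposition in time, $t\in[2^k,2^{k+1}]$, with $\gamma>1/2$ ensuring $\sum k^{-2\gamma}<\infty$. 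I would therefore aim for the dyadic statement
\begin{align*}
  \int_{2^k}^{2^{k+1}}\|\dots\|^2\frac{dt}{t}\lesssim \bigl\|\one(|x|\gtrsim 2^k)\phi\bigr\|^2+2^{-\epsilon k}\|\phi\|^2,
\end{align*}
and bound $\sum_k\|\one(|x|\gtrsim2^k)\phi\|^2\cdot k^{2\gamma}k^{-2\gamma}$ by Cauchy--Schwarz, producing $\|\log(2+\langle x\rangle)^\gamma\phi\|^2$. Integrating $\mathbb{D}\Phi$ over time then yields \eqref{eq:the7}, after replacing $\one(|x|/t\ge\Theta)$ by $\tilde\chi$ with slightly smaller $\Theta$.

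For \eqref{eq:the7cor}, note that $\langle \phi_t,\Phi(t)\phi_t\rangle$ is bounded, and its derivative is the sum of a nonpositive term and an $L^1(dt)$ term. Hence the limit exists for $\phi$ in the dense set $\D(\log^\gamma)$. By \eqref{eq:the7}, applied with $\Theta_0$ in place of $2\Theta_0$ to the slightly larger cutoff, $\int\|\dots\|^2\,dt/t<\infty$, so the limit must be $0$. The factor $2$ gives room between $\chi$ and the indicator. Density and uniform boundedness of the operators then extend the result to all $\phi\in L^2$.
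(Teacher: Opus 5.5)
Your proposal has genuine gaps. The main one is structural. Your observable $F\chi(\langle x\rangle/\Theta t)F$ is a single switch equal to $1$ at infinity, so its drift term $-t^{-1}F\tilde\chi F$ controls only the transition shell $\{\Theta t/2\le\langle x\rangle\le\Theta t\}$. For $\alpha<2$ the commutator $[|D|^\alpha,\chi]$ is not even supported there. Your closing step, ``replace $\one(|x|/t\ge\Theta)$ by $\tilde\chi$ with slightly smaller $\Theta$'', is false, since $\tilde\chi$ vanishes on the far exterior.

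Indeed, with your weight-free $O(t^{-1-\epsilon})$ errors and initial value $\lesssim\|\phi\|^2$, that step would prove \eqref{eq:the7} with no weight at all. The translated states $\psi(\cdot-Re_1)$ of Appendix~\ref{s:sharp} rule this out.

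The paper reaches the whole exterior with dyadic velocity shells $\chi_n(B_t)=q(|x|/(R_nt))$, $R_n=2^n$, and switches $X_n(s)=\int_0^s\chi_n^2$. The drift $-\frac2tB_t\chi_n(B_t)^2\lesssim-\frac{R_n^2}{t}\chi_n(B_t)^2$ beats the velocity term, which is only $O(R_n/t)$, once $n\ge n_0$. Summing over $n$ with weights $R_n^{-2}$ and using $X_n(s)\le s$ turns the initial values into $S^{-2}\|xFU(S)\phi\|^2$; this is where a moment of $\phi$ becomes unavoidable.

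Your logarithmic step does not repair this. The dyadic-in-time bound by $\|\one(|x|\gtrsim2^k)\phi\|^2+2^{-\epsilon k}\|\phi\|^2$ is only asserted. Nothing in your argument produces it: integrating from $t=1$ with initial value $\lesssim\|\phi\|^2$ cannot localize the right side to $\{|x|\gtrsim 2^k\}$. Moreover, summing it in $k$ would directly give the endpoint $\gamma=1/2$, so your Cauchy--Schwarz step is misplaced. The paper instead proceeds in three steps:
\begin{itemize}
\item it splits $\phi=\sum_j\phi_j$ into spatial shells $\langle x\rangle\sim2^j$;
\item it applies the late-start bound of Lemma~\ref{lem:late-start} from $S_j=2^{p(j+1)}$, where $S_j^{-2}$ and $S_j^{1-\beta}$ compensate the polynomial moments of $\phi_j$;
\item it bounds $t\in[1,S_j]$ trivially via orthogonality of the $\phi_j$, at cost $\log S_j\sim j$, and only then uses $\sum_j(j+1)^{-2\gamma}<\infty$.
\end{itemize}

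Second, your error analysis fails at low frequencies, not high ones. The double commutator carries $\partial_{\xi_j}\partial_{\xi_k}|\xi|^\alpha(D)\sim|D|^{\alpha-2}$, which is singular at $\xi=0$. When $\kappa\neq0$ or $V\neq0$, $f(H_\kappa)$ gives no low-frequency localization, so sandwiching by it does not yield $O(t^{-2})$.

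The paper bounds $\partial_{\xi_j}\partial_{\xi_k}|\xi|^\alpha(D)\langle x\rangle^{\alpha-2}$ by Hardy's inequality in Fourier space; this is where $\alpha>2-d/2$ enters. The weight $\langle x\rangle^{2-\alpha}$ costs $t^{2-\alpha}$ along the flow (Lemma~\ref{lem9}), leaving decay $t^{-\alpha}$, which is not integrable at $\alpha=1$. For $\alpha=1$ the paper needs an additional step. It splits $v_t=g(|D|^\alpha)v_t+Kv_t$ with $v_t=f(H_\kappa)\me{-itH_\kappa}\phi$ and $K=g(H_\kappa)-g(|D|^\alpha)$. It then applies Hardy's inequality to the order-$(\alpha-3)$ multipliers $[\partial_{\xi_j}\partial_{\xi_k}|\xi|^\alpha(D),x_\ell]$, and this is where $d\ge5$ enters.

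You attribute these hypotheses to $\D(H_\kappa)\subset H^\alpha$, which is mistaken. That inclusion fails in general, e.g.\ for $\kappa>0$ close to $\kappa_{\rm c}$, where domain functions carry an $|x|^{-\delta}$ singularity at the origin that is not in $H^\alpha$. It is also unnecessary: $|D|^{\alpha-1}f(H_\kappa)$ is bounded simply because $\alpha-1\le\alpha/2$ and the form domain is $H^{\alpha/2}$.

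Your argument for \eqref{eq:the7cor} (monotone plus integrable, then $\int dt/t=\infty$) would work once such error bounds hold on a dense set. The paper's mean-value argument is lighter: it needs only a first-order commutator bound of order $t^{-1}$ on the time derivative.
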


In Appendix~\ref{s:sharp}, we show that \eqref{eq:the7} cannot hold uniformly in the stated class with an exponent $\gamma<1/2$. On the other hand, we do not know if $\gamma>1/2$ is optimal.

\smallskip
In \cite{Soffer2016}, maximal and minimal escape velocity bounds for $|D|-V$ in $d=3$ are proved under the assumption that $V$, $x\cdot\nabla V$, and $\langle x\rangle\Delta V$ decay faster than $\langle x\rangle^{-2}$. The maximal velocity estimate there is energy-localized down to zero and assumes that zero is neither a resonance nor an eigenvalue. Moreover, it requires finiteness of $\|\langle x\rangle^{1+\epsilon}\phi\|$ ($\epsilon>0$). On the other hand, Theorem~\ref{the7} incorporates subcritical Hardy singularities and applies to states which are energy-localized away from zero with $\|\left(\log(2+\langle x\rangle)\right)^\gamma\phi\|<\infty$ ($\gamma>1/2$). However, our result at $\alpha=1$ requires $d\geq5$.
We also mention the recent maximal velocity estimates in \cite{SigalWu2025} of the form $\|\chi_X\me{-itH}\chi_Y\|\lesssim\me{-\mu(d_{XY}-c|t|)}$ for disjoint sets $X,Y\subset\R^d$ and a broad family of Schr\"odinger-type operators $H=T(-i\nabla)-V(x)$. It excludes, however, symbols like $T=|D|^\alpha$ with $0<\alpha<2$ as they are not analytic at $\xi=0$ and, moreover, have unbounded group velocity when $\alpha\neq1$.
The estimate applies, e.g., to $(m^2-\Delta)^{1/2}-V(x)$ with $m^2>0$ and $V$ relatively form-bounded with respect to the kinetic energy with form bound less than $1$.
The estimates in \cite{SigalWu2025} are uniform off-diagonal exponential light-cone bounds, i.e., of a different nature from the estimate in Theorem~\ref{the7}.

\smallskip
Our proof of Theorem~\ref{the7} proceeds by controlling the time evolution of a ``switch function'', which turns on when $|x|/t \gtrsim \Theta$.
To this end, we compute its Heisenberg derivative, which involves a commutator with $H_\kappa$. Controlling it is not easy since $|D|^\alpha$ is nonlocal for $\alpha<2$.
To estimate the commutator, we use the Helffer--Sj\"ostrand formula, which reduces the problem to estimating commutators of $H_\kappa$ with resolvents of $|x|^2/t^2$.
A fundamental difficulty is that the commutator produces lower-order terms involving the velocity $\alpha|D|^{\alpha-2}D$. The assumption $\alpha\geq1$ ensures that the corresponding symbol is bounded at low frequencies. Note that for $\alpha<1$, even a spectral localization with respect to $H_\kappa$ does not guarantee control of $|D|^{\alpha-2}D$ at low frequencies. Treating this regime requires a separate low-frequency analysis, which we have not attempted.
In estimating the commutator terms, we are actually facing second-order commutators, which produce terms of the form $\partial_{\xi_j}\partial_{\xi_k}|\xi|^\alpha(D)\langle x\rangle^{\alpha-2}$. To control them, we use Hardy's inequality (in Fourier space), which requires $\alpha>2-d/2$. These commutator estimates yield a time-decay of order $t^{-\alpha}$, which is integrable at large times when $\alpha>1$.
To treat $\alpha=1$, we produce further time-decay by exploiting the energy-localization more efficiently. Our approach involves commutators of position operators with order-$\alpha-2$ Fourier multipliers, which thus requires estimating terms of the form $|D|^{\alpha-3}\langle x\rangle^{\alpha-3}$. Doing so is possible by using Hardy's inequality with $\alpha>3-d/2$, i.e., $d\geq5$.
These assumptions on $\alpha$ are unrelated to the Hardy potential. In fact, since we assume $\kappa<\kappa_{\rm c}$, the Hardy potential can be completely absorbed by the kinetic energy in the sense of quadratic forms.

%%%%%%%%%%%%%%%%%%

\subsection{Application: Nonexistence of a wave operator involving Coulomb-type decay}

We will use Theorem~\ref{the7} to give another proof of nonexistence of wave operators involving long-range potentials. Recall that for two selfadjoint operators $H$ and $H_0$ on a Hilbert space $\mathcal H$, the wave operators are defined as the strong limits
\begin{align}
  W_\pm(H,H_0) := \slim_{t\to\pm\infty}\me{itH}\me{-itH_0}P_{\rm ac}(H_0)
\end{align}
provided these limits exist. Here, $P_{\rm ac}(H_0)$ denotes the spectral projection on the absolutely continuous spectral subspace of $H_0$, denoted by $\mathcal H_{\rm ac}(H_0)$. As the analysis of $W_-(H,H_0)$ is under our assumptions the same as that of $W_+(H,H_0)$, we will from now on only consider the latter and denote it by $W(H,H_0)$. If $W(H,H_0)$ exists, then $\ran(W(H,H_0))$ is a closed subspace of $\mathcal H_{\rm ac}(H)$, and we have the intertwining property $\phi(H)W(H,H_0)=W(H,H_0)\phi(H_0)$ for any bounded Borel function $\phi$.

\begin{theorem}
  \label{asympcomplhardyplusperturbationnonexistence}
  Let $d\in\N$, $\alpha\in[1,2]\cap(2-d/2,d)$, and $\kappa<\kappa_{\rm c}$.
  If $\alpha=1$, assume in addition $d\geq5$.
  Assume $V^L \in C^2(\R^d)$ satisfies for some $K_1,K_2>0$,
  \begin{subequations}
    \label{eq:basicassumplongrange}
    \begin{align}
      \label{ass1}
      K_1\langle x\rangle^{-1} \leq V^L(x) \leq K_2 \langle x\rangle^{-1} \quad \text{or} \\
      \label{ass2}
      K_1\langle x\rangle^{-1} \leq -V^L(x) \leq K_2 \langle x\rangle^{-1},
    \end{align}
  \end{subequations}
  and \eqref{ass5} for $\rho=1$ and all multi-indices $\beta$ with $|\beta|\leq2$. Then $W(L_\kappa - V^L,L_\kappa)$ does not exist.
\end{theorem}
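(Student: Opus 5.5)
We argue by contradiction. Suppose $W:=W(L_\kappa-V^L,L_\kappa)$ exists, and write $H:=L_\kappa-V^L$. We treat case \eqref{ass1}; case \eqref{ass2} is identical up to a sign. The spectrum of $L_\kappa$ is purely absolutely continuous on $[0,\infty)$ by scaling. Fix $0\neq\phi\in C_c^\infty(\R^d\setminus\{0\})$ with Fourier support in an annulus, and put $\psi:=f(L_\kappa)\phi$ for some $f\in C_c^\infty(0,\infty)$ with $f\phi\neq0$. By the intertwining property, $W\psi=f(H)W\phi$. Then
\begin{align*}
  \me{itH}\me{-itL_\kappa}\psi-W\psi
  = i\int_t^\infty \me{isH}V^L\me{-isL_\kappa}\psi\,ds
\end{align*}
should tend to zero. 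The plan is to show that the Coulomb tail forces $\int_1^\infty\langle \me{-isH}W\psi,V^L\me{-isL_\kappa}\psi\rangle\,ds$ to diverge like $\int ds/s$, which contradicts convergence.

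\textbf{Step 1 (lower bound on the free side).} For the free Hardy evolution we need a minimal velocity bound: $\|\one(|x|\le\theta s)\me{-isL_\kappa}\psi\|\to0$ for small $\theta$ when $\psi$ is energy-localized away from zero. This follows from the Mellin/dilation structure of $L_\kappa$ (homogeneity of degree $\alpha$ under dilations), for instance via a Mourre estimate with the generator of dilations $A$, since $i[L_\kappa,A]=\alpha L_\kappa$ is positive on $\ran f(L_\kappa)$. We also need a maximal velocity bound. Theorem~\ref{the7} with $V=0$ gives
\begin{align*}
  \int_1^\infty\|\one(|x|\ge\Theta s)\me{-isL_\kappa}\psi\|^2\,\frac{ds}{s}<\infty,
\end{align*}
and \eqref{eq:the7cor} gives pointwise decay. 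Hence $\me{-isL_\kappa}\psi$ concentrates in the shell $\theta s\le|x|\le\Theta s$, where $V^L\sim s^{-1}$ by \eqref{ass1}.

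\textbf{Step 2 (the same for the perturbed side).} Apply Theorem~\ref{the7} to $H$. This is allowed because $V^L$ is bounded and satisfies \eqref{ass5} with $\beta=0$ and $\rho=1$. It gives localization of $f(H)\me{-isH}W\phi$ in $|x|\le\Theta s$, in the time-integrated $ds/s$ sense. Since $\me{-isH}W\psi-\me{-isL_\kappa}\psi\to0$, the perturbed state is asymptotically also in the shell.

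\textbf{Step 3 (divergence).} Consider the positive operator $B_s:=\one(\theta s\le|x|\le\Theta s)V^L\one(\theta s\le|x|\le\Theta s)\ge cK_1 s^{-1}\one(\theta s\le|x|\le\Theta s)$. Replacing $\me{-isH}W\psi$ by $\me{-isL_\kappa}\psi$ up to $o(1)$ in norm, the inner product satisfies
\begin{align*}
  \re\langle \me{-isH}W\psi,V^L\me{-isL_\kappa}\psi\rangle \ge \frac{cK_1}{s}\bigl(\|\psi\|^2-o(1)\bigr)-\frac{C}{s}\epsilon(s),
\end{align*}
where $\epsilon(s)$ collects the contributions outside the shell. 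These have finite $\int ds/s$ by Steps 1 and 2 together with Cauchy--Schwarz, since $|V^L|\lesssim s^{-1}$ on $|x|\ge\theta s$ and $V^L$ is bounded near the origin. Integrating over $s\in[t,T]$ gives growth of order $\log(T/t)$. This contradicts the Cauchy property of $\me{itH}\me{-itL_\kappa}\psi$, which is equivalent to the integral tending to zero when tested against $W\psi$.

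\textbf{Main obstacle.} The delicate point is the $o(1)$ replacement inside the divergent $1/s$ integral. Because $o(1)\cdot s^{-1}$ need not be integrable, we first fix $t_0$ so large that $\|\me{-isH}W\psi-\me{-isL_\kappa}\psi\|\le\|\psi\|^2/(4\|\psi\|)$ for all $s\ge t_0$. The error is then at most half the main term, and this works. The genuinely new input beyond Theorem~\ref{the7} is the minimal velocity estimate for $L_\kappa$ in Step 1; I would obtain it by writing $\me{-isL_\kappa}=\me{-is L_\kappa}$ in the dilation picture, using $\me{iA\tau}L_\kappa\me{-iA\tau}=\me{\alpha\tau}L_\kappa$ and standard Mourre propagation estimates.
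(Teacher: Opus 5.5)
The gap is in Step~3, in the cross term. Write $u_s=\me{-isL_\kappa}\psi$ and $\delta_s=\me{-isH}W\psi-u_s$. The diagonal term $\langle u_s,V^Lu_s\rangle$ is only of size $1/s$. Incidentally, it needs no minimal velocity bound at all: $\langle x\rangle^{-1}\geq(\sqrt2\Theta s)^{-1}$ on all of $\{|x|<\Theta s\}$, so Theorem~\ref{the7} for $L_\kappa$ suffices, and Step~2 is not needed either. To absorb $\re\langle\delta_s,V^Lu_s\rangle$ using only $\sup_{s\geq t}\|\delta_s\|\to0$, you need
\[
  \int_t^T\|V^Lu_s\|\,ds\leq C\log(T/t)+C'
\]
with $C,C'$ independent of $t,T$. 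On $\{|x|\geq\theta s\}$ this holds, since $|V^L|\leq K_2/(\theta s)$ there. On $\{|x|<\theta s\}$, however, the potential is $O(1)$, and that contribution is bounded only by $K_2\|\delta_s\|\,\|\one(|x|<\theta s)u_s\|$. Step~1 gives only $\|\one(|x|<\theta s)u_s\|\to0$, so this is $o(1)$. Its integral over $[t,T]$ is then $o(T)$, which swamps the $\log(T/t)$ coming from the main term. Hence your claim that the contributions outside the shell have finite $\int\epsilon(s)\,ds/s$ does not follow from Steps~1 and~2. What you need is an integrable-in-time minimal velocity bound, e.g.\ $\int^\infty\|\one(|x|<\theta s)u_s\|\,ds<\infty$. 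Your ``main obstacle'' paragraph treats a harmless point instead. (There, $\|\delta_s\|$ must be a small multiple of $K_1\theta\|\psi\|/(K_2\Theta)$ rather than $\|\psi\|/4$, but that is fixable since $\|\delta_s\|\to0$.) Smallness of $\delta_s$ buys nothing without this rate.

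The paper supplies the rate in dilation space rather than position space. Theorem~\ref{the6} with $V=0$ and $\nu=2$ gives $\|\one(A/t\leq\theta)u_t\|\lesssim t^{-2}\|\langle A\rangle^2\phi\|$; this is why the test vector is taken in $\D(\langle x\rangle^2)\cap\D(\langle A\rangle^2)$ (Lemma~\ref{lem:wave-test-vector}). Converting $A$-localization into $1/t$ decay of the potential term is a second idea you are missing. Via intertwining, the paper writes $W\phi=R_\lambda W(\lambda+L_\kappa)\phi$ with $R_\lambda=(\lambda+H)^{-1}$, so the cross term becomes $\epsilon\int\|R_\lambda V^Lu_t\|\,dt$. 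On the range of $\chi(A/t)$ it then uses $\|\chi(A/t)R_\lambda V^Lu_t\|\leq\frac{2}{\theta t}\|AR_\lambda V^Lf(L_\kappa)\|\,\|\phi\|$ together with the $O(1/t)$ commutator bounds of Lemma~\ref{lem:commutators-spectral-cutoff}. The resolvent supplies the left factor $\langle D\rangle^{-\alpha/2}$ that makes $AV^L$ bounded using only form-domain regularity $H^{\alpha/2}$ (Lemma~\ref{lem:AV-sandwich} and \eqref{eq:ARVf}). Without it you would need $f(L_\kappa)$ to map into $H^1$, which you do not have in general for $\alpha<2$, $\kappa\neq0$. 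The price is the extra term $\langle V^Lu_t,R_\lambda V^Lu_t\rangle$, whose sign in case \eqref{ass2} is controlled by taking $\lambda$ large. A position-space minimal velocity bound with rate $O(s^{-1-\delta})$ would close your argument directly. However, Mourre-type propagation estimates give bounds in terms of $A$, and you do not supply the phase-space conversion to $\{|x|<\theta s\}$ with a rate.
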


%%%%%%%%%%%%%%%%%%

\subsection*{Organization and notation}
Section~\ref{s:technicalpreliminaries} contains technical preliminaries, including the Helffer--Sj\"ostrand formula and weighted estimates.
In Sections~\ref{s:maximalvelocity} and~\ref{s:proofthe7cor}, we prove \eqref{eq:the7} and \eqref{eq:the7cor} in Theorem~\ref{the7}.
In Section~\ref{s:proofnonexistence}, we prove Theorem~\ref{asympcomplhardyplusperturbationnonexistence}. Displayed constants like $C$ in estimates may change from estimate to estimate, but we continue to use the same letter by an abuse of notation.

%%%%%%%%%%%%%%%%%%%%%%%%%%%%%%%%%%%%%%%%
%%%%%%%%%%%%%%%%%%%%%%%%%%%%%%%%%%%%%%%%

\section{Technical preliminaries}
\label{s:technicalpreliminaries}

We prepare some technical results, partly taken from \cite{IshidaMerz2026}.
We denote the generator of dilations by
\begin{align}
  A := \frac12\left(x\cdot D + D\cdot x\right).
\end{align}

%%%%%%%%%%%%%%%%%%

\subsection{Commutator classes}

The following notation will be helpful in the ensuing commutator expansions.
For a selfadjoint operator $A:\D(A)\to\mathcal H$ in a Hilbert space $\mathcal H$ and $B\in\mathcal B(\mathcal H)$ (the space of linear bounded operators on $\mathcal H$), we write $B\in C^1(A)$ if the map $\R\ni t\mapsto \me{-itA}B\me{itA}$ is strongly continuously differentiable. Equivalently, $B\mathcal D(A)\subset\mathcal D(A)$ and the commutator
$$
[B,A]=BA-AB,
\qquad
\mathcal D([B,A])=\mathcal D(A),
$$
extends to a bounded operator on $\mathcal H$.
Moreover, for $k\in\N$, we write $B\in C^k(A)$ if
$$
\R\ni t\mapsto\me{-itA}B\me{itA}
$$
is strongly $C^k$. It will also be useful to use the iterated commutator notation
\begin{align}
  {\rm ad}_{A}^{0}(H) = H
  \quad \text{and} \quad
  i^n{\rm ad}_{A}^n(H) = i[i^{n-1} {\rm ad}_{A}^{n-1}(H),A]
  \quad \text{for } n\in\N.
\end{align}

If $H$ is selfadjoint (not necessarily bounded), we write $H\in C^k(A)$ if, for one (and hence every) $z\in\rho(H)$ (the resolvent set of $H$), we have $(z-H)^{-1}\in C^k(A)$.
Whenever the commutator $[H,A]$ is represented by an operator for which the following products are meaningful, one has
$$
[(z-H)^{-1},A] = (z-H)^{-1}[H,A](z-H)^{-1}.
$$

%%%%%%%%%%%%%%%%%%

\subsection{Helffer--Sj\"ostrand formulae and commutator expansions}
\label{ss:functionalcalculus}

We recall the Helffer--Sj\"ostrand formula \cite[Proposition~7.2]{HelfferSjostrand1989}. It allows us to control functions of selfadjoint operators in terms of their resolvents. To state the formula, we recall the notation
\begin{align}
  \bar\partial
  = \frac{\partial}{\partial\overline z}
  = \frac12\left(\frac{\partial}{\partial x} + i \frac{\partial}{\partial y}\right),
  \qquad
  z=x+iy \in \C,
\end{align}
and $dz\wedge d\overline z=-2i dx\,dy$, which is $-2i$ times the Lebesgue measure on $\C$.

\begin{proposition}
  \label{prop:analyticextension}
  Let $f\in C_c^\infty(\R)$. Then there exists $\tilde f\in C_c^\infty(\C)$, called an {\em almost analytic extension of $f$} such that
  $$
  \tilde f\big|_{\R} = f
  $$
  and for every $N\in\N$ there is a constant $C_N>0$ such that
  $$
  \left| \frac{\partial \tilde f}{\partial\overline z}(z) \right|
  \leq C_N\, |\im(z)|^{N},  \quad z\in\C.
  $$
  Moreover, for every selfadjoint operator $A$ and every such $f$, we have the Helffer--Sj\"ostrand formula
  \begin{align}
    \label{eq:hs}
    f(A) = \frac{1}{2\pi i} \int_{\C}\frac{\partial\tilde f}{\partial \overline z}(z)(z-A)^{-1} \, dz\wedge d\overline z,
  \end{align}
  where the integral converges absolutely in operator norm.
  More generally, for $n\in\N$,
  \begin{align}
    \label{eq:hsderivative}
    f^{(n)}(A) = \frac{n!}{2\pi i}\int_{\C}\frac{\partial\tilde f}{\partial \overline z}(z)(z-A)^{-n-1}\, dz\,\wedge d\overline z.
  \end{align}
\end{proposition}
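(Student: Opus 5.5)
The statement is the standard Helffer--Sj\"ostrand calculus. I would prove it in three steps: construct the almost analytic extension, prove the scalar identity via Cauchy--Pompeiu, and transfer it to operators with the spectral theorem. The only points needing care are that one $\tilde f$ works for every $N$ simultaneously, and that the integrals converge near the real axis.

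\emph{Construction of $\tilde f$.} Fix $\chi\in C_c^\infty(\R)$ with $\chi=1$ on $[-1,1]$ and $\supp\chi\subset[-2,2]$. Following Mather/Borel, I set
\begin{align*}
  \tilde f(x+iy) := \chi(y)\sum_{n=0}^\infty f^{(n)}(x)\frac{(iy)^n}{n!}\,\chi(\lambda_n y),
\end{align*}
with $\lambda_n\ge1$ increasing fast. Choosing $\lambda_n\ge 2^n(1+\|f^{(n)}\|_{C^n})$ makes the series converge in every $C^k$ norm, so $\tilde f\in C_c^\infty(\C)$ and $\tilde f|_{\R}=f$. Applying $\bar\partial=\frac12(\partial_x+i\partial_y)$ termwise, the contributions in which $\partial_y$ hits $(iy)^n$ telescope against the $\partial_x$ contributions of the previous term. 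What remains is:
\begin{itemize}
  \item terms carrying $\chi'(y)$, which vanish for $|y|<1$;
  \item terms carrying $\lambda_n\chi'(\lambda_n y)$, supported in $|y|\sim\lambda_n^{-1}$;
  \item mismatch terms $f^{(n+1)}(x)\frac{(iy)^n}{n!}\bigl(\chi(\lambda_n y)-\chi(\lambda_{n+1}y)\bigr)$.
\end{itemize}
Every term of index $n\ge N$ is $O(|y|^N)$, uniformly by the choice of $\lambda_n$. Every term with $n<N$ vanishes for $|y|<\lambda_{N}^{-1}$, and is trivially bounded by $C|y|^N$ on the region $|y|\ge\lambda_N^{-1}$. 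This gives $|\bar\partial\tilde f|\le C_N|\im z|^N$ for every $N$.

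\emph{Scalar formula.} For $\lambda\in\R$ I apply the Cauchy--Pompeiu formula to the compactly supported smooth function $g=\tilde f$:
\begin{align*}
  f(\lambda)=\tilde f(\lambda) = \frac{1}{2\pi i}\int_\C \frac{\partial\tilde f}{\partial\bar z}(z)\,(z-\lambda)^{-1}\,dz\wedge d\bar z.
\end{align*}
To verify it, excise the disc $\{|z-\lambda|<\epsilon\}$, use Stokes with $d\bigl(\tilde f\,(z-\lambda)^{-1}dz\bigr)$, and let $\epsilon\to0$. The boundary circle then contributes $2\pi i\,\tilde f(\lambda)$, with orientation giving the signs above. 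The integrand is bounded by $C_1|y|\,|y|^{-1}=C_1$ on a compact set, so it is absolutely integrable.

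\emph{Derivatives.} Differentiating $n$ times in $\lambda$ under the integral gives the factor $n!(z-\lambda)^{-n-1}$. This is legitimate by dominated convergence, since $|\bar\partial\tilde f|\,|\im z|^{-n-1}\le C_{n+1}$.

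\emph{Operator version.} For selfadjoint $A$ we have $\|(z-A)^{-k}\|\le|\im z|^{-k}$. Hence the operator-valued integrands in \eqref{eq:hs} and \eqref{eq:hsderivative} are bounded in norm by $C_k$ on the compact set $\supp\tilde f$, and the integrals converge absolutely in operator norm. To identify the limit, take $u,v\in\mathcal H$ and the complex spectral measure $\mu_{u,v}$ of $A$. Then
\begin{align*}
  \langle u,(z-A)^{-k}v\rangle=\int_\R (z-\lambda)^{-k}\,d\mu_{u,v}(\lambda).
\end{align*}
Fubini applies because $\int\!\!\int|\bar\partial\tilde f|\,|\im z|^{-k}\,d|\mu_{u,v}|\,dx\,dy<\infty$. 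Interchanging the order of integration and inserting the scalar formula yields $\langle u,f^{(n)}(A)v\rangle$.

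I expect the main obstacle to be the first step: getting the bound $|\bar\partial\tilde f|\le C_N|\im z|^N$ for all $N$ from a single extension. This is why I use the Borel-type choice of $\lambda_n$ rather than a finite Taylor polynomial. Everything after that is routine bookkeeping.
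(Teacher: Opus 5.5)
Your proof is correct. It differs from the paper mainly in how $\tilde f$ is constructed.

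The paper gives no proof of its own. It cites \cite[Proposition~7.2]{HelfferSjostrand1989}, and the remark that follows uses only the finite-order extension $\widetilde f_N(z)=\bigl(\sum_{r=0}^N f^{(r)}(x)(iy)^r/r!\bigr)\eta(y/\langle x\rangle)$. Its $\bar\partial$ equals $\tfrac12 f^{(N+1)}(x)(iy)^N/N!\,\eta(y/\langle x\rangle)$ plus terms supported in $\langle x\rangle\le|y|\le2\langle x\rangle$. This is $\mathcal O(|\im z|^N)$ for that one $N$ only. That suffices everywhere downstream, because the paper always takes an extension ``of sufficiently large order $N$'' (e.g.\ $\widetilde Y_{n,N}$, $\widetilde\chi_{n,N}$) and needs only finitely many negative powers of $|\im z|$. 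The explicit form also makes bounds such as \eqref{eq_the7_3} immediate.

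Your Borel/Mather-type series with cutoffs $\chi(\lambda_n y)$ is what the statement literally requires: one $\tilde f$ with $|\bar\partial\tilde f|\le C_N|\im z|^N$ for all $N$ at once. Your bookkeeping of the three residual families is sound: the $\chi'(y)$ terms, the $\lambda_n\chi'(\lambda_n y)$ terms, and the mismatch terms that vanish for $|y|<\lambda_{n+1}^{-1}$. Compared with the paper's construction, yours costs explicitness but removes the need to track $N$.

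Your remaining steps are the standard argument, as in \cite{Davies1996,DimassiSjostrand1999}, which the paper delegates to the literature:
\begin{itemize}
  \item Cauchy--Pompeiu for the scalar identity;
  \item differentiation under the integral, justified by $|\bar\partial\tilde f|\,|\im z|^{-n-1}\le C_{n+1}$;
  \item transfer to operators through the spectral measure and Fubini.
\end{itemize}
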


\begin{remark}
  One construction of an almost analytic extension of $f\in C^\infty(\R:\,\C)$ of order $N$ is
  $$
  \tilde f(z) := \left(\sum_{r=0}^N f^{(r)}(x)(iy)^r/r!\right) \eta\left(\frac{y}{\langle x\rangle}\right),
  $$
  where $\eta \in C_c^\infty(\R;[0,1])$ satisfies $\eta(s)=1$ for $|s|\leq1$, $\eta(s)=0$ for $|s|>2$, and is smooth in between; see \cite{Davies1996,DimassiSjostrand1999}. The precise values of $N$ and choice of $\eta$ are not relevant.
  It is sometimes useful to indicate the order $N$ of the above construction, in which case we write more explicitly $\widetilde f_N(z)$ instead of just $\tilde f(z)$.
  Note that $\tilde f(z)$ is smooth as a function of $x$ and $y$, but not analytic, and that $\tilde f(z)|_\R = f(x)$. It follows from this construction that
  $$
  \left|\frac{\partial \tilde f}{\partial\overline z}\right|
  = \mathcal{O}(|\im(z)|^N) \quad \text{as} \ y \to 0 \ \text{for all} \ x\in\R.
  $$
  In particular,
  $$
  \frac{\partial \tilde f}{\partial\overline z}=0, \quad z\in \R,
  $$
  which is why one calls $\tilde f$ an almost analytic extension of $f$.
\end{remark}

We will use the following two commutator estimates in the proof of nonexistence of wave operators.

\begin{lemma}[{\cite[Lemma~5.7]{IshidaMerz2026}}]
  \label{lem:commutators-spectral-cutoff}
  Let $\alpha\in[1,2]\cap(0,d)$, $\kappa<\kappa_{\rm c}$, and $\rho=1$.
  Assume $V^L \in C^2(\R^d)$ obeys \eqref{eq:basicassumplongrange} and \eqref{ass5} for all multi-indices $\beta$ with $|\beta|\leq2$. Let $H_\kappa=L_\kappa-V^L$ and $\chi\in C^\infty(\R)$ satisfy
  $$
  \chi(s)=0 \quad\text{for} \ s \leq \frac{\theta}{2},
  \qquad
  \chi(s)=1 \quad\text{for} \ s \geq \theta.
  $$
  Let $\lambda\in\R$ be such that $H_\kappa+\lambda\geq\delta>0$ for some $\delta>0$. Then,
  \begin{align}
    \label{eq:chi-resolvent-commutator}
    \|[\chi(A/t),(\lambda+H_\kappa)^{-1}]\| + \|[\chi(A/t),V^L]\| & \lesssim \frac1t, \quad t>0.
  \end{align}
\end{lemma}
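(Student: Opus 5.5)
The plan is to prove the two bounds in \eqref{eq:chi-resolvent-commutator} separately, by writing $\chi(A/t)$ through the Helffer--Sj\"ostrand formula \eqref{eq:hs} and reducing each commutator with $\chi(A/t)$ to commutators with the resolvent $(z-A/t)^{-1}$. Since $\chi$ is not compactly supported, first write $\chi=1-\chi_0$ with $\chi_0\in C^\infty$, $\chi_0=1$ near $-\infty$ and $\chi_0=0$ on $[\theta,\infty)$. Because $[1,\cdot]=0$, only $\chi_0$ matters. I would then either use the standard extension of \eqref{eq:hs} to symbols of order zero, where $|\bar\partial\tilde\chi(z)|\lesssim_N|\im z|^N\langle z\rangle^{-N-1}$ on the support and the integral converges absolutely, or split $\chi_0$ into a compactly supported smooth part plus a part handled by a Fourier representation.

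For any operator $B$ with $[B,A]$ defined, this gives
\begin{align*}
  [\chi(A/t),B]=-\frac{1}{2\pi i}\int_{\C}\bar\partial\tilde\chi_0(z)\,(z-A/t)^{-1}\,\frac{[A,B]}{t}\,(z-A/t)^{-1}\,dz\wedge d\overline z .
\end{align*}
Together with $\|(z-A/t)^{-1}\|\le|\im z|^{-1}$, this yields
\begin{align*}
  \|[\chi(A/t),B]\|\lesssim t^{-1}\|[A,B]\|
\end{align*}
whenever $[A,B]$ extends to a bounded operator.

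For $B=V^L$ we have $i[A,V^L]=-x\cdot\nabla V^L$. By \eqref{ass5} with $|\beta|=1$ and $\rho=1$, this is bounded by $\langle x\rangle^{-1}\lesssim1$, which gives the second bound. (Only $|\beta|\le1$ is needed here.)

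For $B=(\lambda+H_\kappa)^{-1}$ we write
\begin{align*}
  [A,(\lambda+H_\kappa)^{-1}]=-(\lambda+H_\kappa)^{-1}[A,H_\kappa](\lambda+H_\kappa)^{-1}.
\end{align*}
The key structural input is the scaling identity $i[L_\kappa,A]=\alpha L_\kappa$, since both $|D|^\alpha$ and $|x|^{-\alpha}$ are homogeneous of degree $-\alpha$ under dilations. Hence
\begin{align*}
  i[H_\kappa,A]=\alpha L_\kappa+x\cdot\nabla V^L=\alpha(H_\kappa+\lambda)-\alpha\lambda+\alpha V^L+x\cdot\nabla V^L .
\end{align*}
Sandwiching between resolvents, the first term gives $\alpha(\lambda+H_\kappa)^{-1}$. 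The remaining terms are bounded multiplication operators, by \eqref{ass1}/\eqref{ass2} and \eqref{ass5}, multiplied by $\|(\lambda+H_\kappa)^{-1}\|^2\le\delta^{-2}$. So $[A,(\lambda+H_\kappa)^{-1}]$ is bounded, and the Helffer--Sj\"ostrand bound above gives the first estimate, $O(1/t)$.

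The main obstacle is making these commutator manipulations rigorous on domains. We need $(\lambda+H_\kappa)^{-1}\in C^1(A)$, with the identity for $[A,H_\kappa]$ holding as forms on $H^{\alpha/2}$, the form domain for $\kappa<\kappa_{\rm c}$. I would prove this via the dilation group $U_s=\me{isA}$. One has $U_s^*L_\kappa U_s=\me{-\alpha s}L_\kappa$ on the form domain, and $U_s^*V^LU_s=V^L(\me{s}x)$. Differentiating
\begin{align*}
  U_s^*(\lambda+H_\kappa)^{-1}U_s=(\lambda+\me{-\alpha s}L_\kappa-V^L(\me{s}\cdot))^{-1}
\end{align*}
at $s=0$ in norm gives the commutator formula directly, using the second resolvent identity and the uniform boundedness of the resolvents. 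This makes the $C^1(A)$ property and the formal computation above justified.
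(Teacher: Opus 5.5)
Your proposal is correct and matches the argument the paper relies on. The paper gives no proof of its own and cites \cite[Lemma~5.7]{IshidaMerz2026}, but it mentions the $S^0(\R)$ Helffer--Sj\"ostrand commutator lemma for bounded operators \cite[Lemma~5.3]{IshidaMerz2026}, and after Lemma~\ref{lem:AV-sandwich} it records that $[A,(\lambda+H_\kappa)^{-1}]$ is bounded because $[A,H_\kappa]=i\alpha L_\kappa+ix\cdot\nabla V^L$. Your route is the same: the bound $\|[\chi(A/t),B]\|\lesssim t^{-1}\|[A,B]\|$ for $B\in C^1(A)$, applied to $B=V^L$ and $B=(\lambda+H_\kappa)^{-1}$.

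The only blemishes are harmless sign slips. The correct formula is $i[A,V^L]=+x\cdot\nabla V^L$, as your own formula for $i[H_\kappa,A]$ implies. Your dilation identities correspond to $U_s=\me{-isA}$ rather than $U_s=\me{isA}$. Neither affects any bound.
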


We also need the boundedness of $AV^L$ for $V^L$ decaying like $|x|^{-1}$ at infinity between Sobolev spaces. Such boundedness was shown in the proof of \cite[Theorem~1.6(3)]{IshidaMerz2026}. We state this result in the following lemma and reproduce its proof.

\begin{lemma}
  \label{lem:AV-sandwich}
  Assume $\alpha\in[1,2]\cap(0,d)$, $\kappa<\kappa_{\rm c}$, and $V^L\in C^2(\R^d)$ obeys \eqref{ass5} for $\rho=1$ and all multi-indices $\beta\in\N_0^d$ with $|\beta|\leq2$. Then,
  \begin{align}
    \label{eq:AV-sandwich}
    \langle D\rangle^{-\alpha/2}AV^L \langle D\rangle^{-\alpha/2}\in\B(L^2).
  \end{align}
  Let $H_\kappa=L_\kappa-V^L$. If $\lambda+H_\kappa \geq \mu>0$ and $f\in C_c^\infty((0,\infty))$, then
  \begin{align}
    \label{eq:RAVf}
    (\lambda + H_\kappa)^{-1}AV^Lf(L_\kappa) \in \B(L^2).
  \end{align}
\end{lemma}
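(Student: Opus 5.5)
For \eqref{eq:AV-sandwich}, I would compute $AV^L$ explicitly. Since $A=\frac12(x\cdot D+D\cdot x)=x\cdot D-\frac{i d}{2}$, we have
\begin{align*}
  AV^L = V^L A + [A,V^L] = V^L\Big(x\cdot D - \frac{id}{2}\Big) - i\,x\cdot\nabla V^L .
\end{align*}
The last two pieces, $V^L$ and $x\cdot\nabla V^L$, are bounded functions by \eqref{ass5} with $\rho=1$ and $|\beta|\le1$. They stay bounded after sandwiching with $\langle D\rangle^{-\alpha/2}$.

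The main term is $\langle D\rangle^{-\alpha/2}V^L x\cdot D\langle D\rangle^{-\alpha/2}$. Writing $V^L x_j = W_j$, we get $|W_j|\lesssim 1$ and $|\partial W_j|\lesssim\langle x\rangle^{-1}$, so $W_j$ is a bounded $C^1$ symbol of order zero in $x$. The operator $D_j\langle D\rangle^{-\alpha/2}$ has order $1-\alpha/2\le 1/2\le\alpha/2$, since $\alpha\ge1$. I would therefore split it as
\begin{align*}
  \langle D\rangle^{-\alpha/2}W_jD_j\langle D\rangle^{-\alpha/2}
  = \langle D\rangle^{-\alpha/2}W_j\langle D\rangle^{\alpha/2}\cdot \langle D\rangle^{-\alpha/2}D_j\langle D\rangle^{-\alpha/2}.
\end{align*}
The second factor is bounded because $\alpha\ge1$. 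For the first factor, I would commute $W_j$ through $\langle D\rangle^{\alpha/2}$ and show that $[\langle D\rangle^{\alpha/2},W_j]$ is bounded, because $\nabla W_j$ is bounded and $\alpha/2\le1$. A convenient route is complex interpolation between $s=0$ and $s=1$: multiplication by $W_j$ is bounded on $L^2$, and it is bounded on $H^1$ since $W_j,\nabla W_j\in L^\infty$. Hence $W_j$ is bounded on $H^{\alpha/2}$, which is exactly the boundedness of $\langle D\rangle^{\alpha/2}W_j\langle D\rangle^{-\alpha/2}$. Its adjoint then gives boundedness of the first factor. I expect this step to be routine. Only the $\beta$ with $|\beta|\le1$ are needed here; second derivatives enter only in the alternative symmetric treatment.

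For \eqref{eq:RAVf}, I would write
\begin{align*}
  (\lambda+H_\kappa)^{-1}AV^Lf(L_\kappa) = (\lambda+H_\kappa)^{-1}\langle D\rangle^{\alpha/2}\cdot\big[\langle D\rangle^{-\alpha/2}AV^L\langle D\rangle^{-\alpha/2}\big]\cdot\langle D\rangle^{\alpha/2}f(L_\kappa).
\end{align*}
The middle factor is bounded by \eqref{eq:AV-sandwich}.

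For the right factor, $f$ has compact support, so $f(L_\kappa)=(1+L_\kappa)^{-1/2}g(L_\kappa)$ with $g$ bounded. Since $\kappa<\kappa_{\rm c}$, Hardy's inequality gives $1+L_\kappa\gtrsim\langle D\rangle^\alpha$ as forms with form domain $H^{\alpha/2}$. Hence $\langle D\rangle^{\alpha/2}(1+L_\kappa)^{-1/2}$ is bounded.

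For the left factor, I take adjoints and need $\langle D\rangle^{\alpha/2}(\lambda+H_\kappa)^{-1}$ to be bounded. Here $V^L$ is bounded and the Hardy term is absorbed, so the form domain of $H_\kappa$ is still $H^{\alpha/2}$. Together with $\lambda+H_\kappa\ge\mu>0$, this gives $\lambda+H_\kappa\gtrsim\langle D\rangle^\alpha$. Therefore $\langle D\rangle^{\alpha/2}(\lambda+H_\kappa)^{-1/2}$ is bounded, and $(\lambda+H_\kappa)^{-1/2}$ is trivially bounded.

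The only delicate point is justifying these products on a common core, for instance $C_c^\infty$ or the Sobolev spaces involved. I would handle this by density.
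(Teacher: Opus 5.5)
Your proposal is correct and follows the paper's proof essentially verbatim: the same expansion $AV^L=\sum_j (x_jV^L)D_j - i\,x\cdot\nabla V^L-\frac{id}{2}V^L$; the same factorization of the first-order term into $\langle D\rangle^{-\alpha/2}a_j\langle D\rangle^{\alpha/2}$ (bounded via the Lipschitz bound on $a_j=x_jV^L$, which your $L^2$--$H^1$ interpolation makes explicit) times the multiplier $\xi_j\langle\xi\rangle^{-\alpha}$; and the same insertion of $(\lambda+H_\kappa)^{-1}\langle D\rangle^{\alpha/2}$ and $\langle D\rangle^{\alpha/2}f(L_\kappa)$ for \eqref{eq:RAVf}. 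Your remark that only derivatives with $|\beta|\le1$ are actually used is also accurate.
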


\begin{proof}
  Since $A=x\cdot D-id/2$,
  $$
  A V^L = \sum_{j=1}^d (x_jV^L)D_j -i\,x\cdot\nabla V^L-\frac{id}{2}V^L.
  $$
  All coefficients in this expression, together with the required derivatives, are bounded. Let $a_j=x_jV^L$ and write
  $$
  \langle D\rangle^{-\alpha/2}a_j D_j \langle D\rangle^{-\alpha/2}
  = \left(\langle D\rangle^{-\alpha/2}a_j \langle D\rangle^{\alpha/2}\right) \left(\langle D\rangle^{-\alpha/2}D_j \langle D\rangle^{-\alpha/2}\right).
  $$
  The first factor is bounded by the Lipschitz norm of $a_j$, and the second is bounded since $\alpha\geq1$. The zeroth-order terms are easier. This proves \eqref{eq:AV-sandwich}.
  Since $\kappa<\kappa_{\rm c}$,
  $$
  (\lambda + H_\kappa)^{-1}\langle D\rangle^{\alpha/2}\in\B(L^2),
  \qquad
  \langle D\rangle^{\alpha/2}f(L_\kappa)\in\B(L^2).
  $$
  Inserting these two factors on the sides of \eqref{eq:AV-sandwich} proves \eqref{eq:RAVf}.
\end{proof}

Let us also record that for $V^L\in C^2(\R^d)$ satisfying \eqref{ass5} with $\rho=1$ for all multi-indices $\beta\in\N_0^d$ with $|\beta|\leq2$, and $H_\kappa=L_\kappa-V^L$,
$$
[A,(\lambda+H_\kappa)^{-1}] = -(\lambda+H_\kappa)^{-1}[A,H_\kappa](\lambda+H_\kappa)^{-1} \in\B(L^2)
$$
since
$$
[A,H_\kappa]=i\alpha L_\kappa +ix\cdot\nabla V^L
$$
and $x\cdot\nabla V^L$ is bounded. Hence, for $f\in C_c^\infty$,
\begin{align}
  \label{eq:ARVf}
  A(\lambda+H_\kappa)^{-1}V^L f(L_\kappa) \in \B(L^2)
\end{align}
by Lemma~\ref{lem:AV-sandwich} and the boundedness of $V^L$.

%%%%%%%%%%%%%%%%%%

\subsection{Weighted estimates}
\label{s:weightedmaximalvelocity}

We first state a weighted resolvent estimate, which is useful for the Helffer--Sj\"ostrand functional calculus.

\begin{lemma}[{\cite[Lemma~6.6]{IshidaMerz2026}}]
  \label{lem8}
  Let $d\in\N$, $\alpha\in[1,2]\cap(0,d)$, $\kappa<\kappa_{\rm c}$, and $\rho>0$.
  Assume $V \in L^\infty(\R^d)$ satisfies $|V(x)| \lesssim \langle x\rangle^{-\rho}$.
  Let $H_\kappa=L_\kappa-V$, $z\in\C\setminus\R$, and $\mu\in[0,1]$.
  Then there exists a positive function $Z(z,\mu)$, which grows at most polynomially in $\langle z\rangle$ and $|\im(z)|^{-1}$, such that
  \begin{align}
    \label{eq_lem8_1}
    \|\langle x\rangle^\mu(z-H_\kappa)^{-1}\langle x\rangle^{-\mu}\|
    & \lesssim Z(z,\mu)
  \end{align}
  holds for $0\leq\mu\leq1$.
  Moreover, this statement also holds for $1<\mu\leq2$ whenever $\alpha\geq1$ if $d\geq3$ and $\alpha>1$ if $d=2$.   
\end{lemma}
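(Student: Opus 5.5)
The statement in question is Lemma~\ref{lem8}, quoted from \cite[Lemma~6.6]{IshidaMerz2026}. The plan is to commute the weight through the resolvent. For $0\leq\mu\leq1$ write
\begin{align*}
\langle x\rangle^\mu(z-H_\kappa)^{-1}\langle x\rangle^{-\mu}
= (z-H_\kappa)^{-1} + [\langle x\rangle^\mu,(z-H_\kappa)^{-1}]\langle x\rangle^{-\mu}.
\end{align*}
Expanding the commutator gives
\begin{align*}
[\langle x\rangle^\mu,(z-H_\kappa)^{-1}] = (z-H_\kappa)^{-1}[\langle x\rangle^\mu,H_\kappa](z-H_\kappa)^{-1}.
\end{align*}
Since $V$ and the Hardy potential commute with $\langle x\rangle^\mu$, only $[\langle x\rangle^\mu,|D|^\alpha]$ survives. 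To make this rigorous, I would replace $\langle x\rangle^\mu$ by the regularized weights $\langle x\rangle^\mu\langle\epsilon x\rangle^{-\mu}$, which are bounded with uniformly bounded derivatives. I would derive bounds uniform in $\epsilon$ and then let $\epsilon\to0$ by monotone convergence on $\langle x\rangle^{-\mu}\psi$.

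\textbf{Key step: the kinetic commutator.} I need to show that $[\langle x\rangle^\mu,|D|^\alpha]\langle x\rangle^{-\mu}$ is bounded from $H^{\alpha/2}$ to $H^{-\alpha/2}$, or better. For $\alpha\in[1,2]$, the symbol-calculus heuristic is
\begin{align*}
[\langle x\rangle^\mu,|D|^\alpha]\approx i\nabla\langle x\rangle^\mu\cdot\alpha|D|^{\alpha-2}D+\dots
\end{align*}
Here $\nabla\langle x\rangle^\mu\,\langle x\rangle^{-\mu}$ is bounded for $\mu\le1$, and $|D|^{\alpha-2}D$ has order $\alpha-1\leq\alpha$. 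I would make this precise via the Helffer--Sjöstrand formula (Proposition~\ref{prop:analyticextension}) or the Balakrishnan representation
\begin{align*}
|D|^\alpha=c_\alpha\int_0^\infty s^{\alpha/2-1}D^2(s+D^2)^{-1}\,ds,
\end{align*}
which reduces matters to commutators $[\langle x\rangle^\mu,(s+D^2)^{-1}]$. These involve only first and second derivatives of the weight, and the $s$-integral converges because $\alpha\geq1$ keeps the low-frequency symbol $|\xi|^{\alpha-1}$ bounded.

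\textbf{Closing the estimate for $\mu\le1$.} Since $\kappa<\kappa_{\rm c}$, the form domain of $H_\kappa$ is $H^{\alpha/2}$. Therefore
\begin{align*}
\|\langle D\rangle^{\alpha/2}(z-H_\kappa)^{-1}\|\lesssim \frac{\langle z\rangle}{|\im z|}
\end{align*}
with polynomial dependence, and the same bound holds on the other side by duality. Combining this with the commutator bound gives \eqref{eq_lem8_1} with $Z(z,\mu)$ polynomial in $\langle z\rangle$ and $|\im z|^{-1}$. The one subtlety is the factor $\langle x\rangle^{-\mu}$ on the right: the commutator produces $(\nabla\langle x\rangle^\mu)$, which I move past $(z-H_\kappa)^{-1}$ by the case $\mu'=\mu-1\le0$. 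That case is trivial, or follows from a bootstrap in $\mu$ in small steps.

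\textbf{The case $1<\mu\le2$, the main obstacle.} I would iterate: write $\langle x\rangle^\mu=\langle x\rangle^{\mu-1}\langle x\rangle$ and apply the first-order argument twice. This creates the second-order commutator
\begin{align*}
[[\langle x\rangle^\mu,|D|^\alpha],\langle x\rangle^{\cdot}],
\end{align*}
whose leading term is of the form $\partial_{\xi_j}\partial_{\xi_k}|\xi|^\alpha(D)$ times a weight like $\langle x\rangle^{\mu-2}$. This symbol behaves like $|\xi|^{\alpha-2}$, which is singular at $\xi=0$ when $\alpha<2$. To control it, I would use Hardy's inequality in Fourier space, $|\xi|^{2(\alpha-2)}\lesssim$ a $(-\Delta_\xi)^{2-\alpha}$-type bound, which converts the singularity into $\langle x\rangle^{2-\alpha}$ weights that the remaining $\langle x\rangle^{-\mu}$ absorbs. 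This requires $2(2-\alpha)<d$. That condition holds for $d\ge3$, $\alpha\ge1$, and for $d=2$ exactly when $\alpha>1$, which matches the hypotheses. I expect this low-frequency step to be the hardest part of the proof.
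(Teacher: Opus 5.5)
The paper only quotes Lemma~\ref{lem8} from \cite{IshidaMerz2026} and contains no proof of it, so your proposal is judged on its own terms. The architecture is the natural one: commute the weight through the resolvent, reduce to a kinetic commutator bounded $H^{\alpha/2}\to H^{-\alpha/2}$, and use Fourier-space Hardy for the second commutator when $\mu>1$. You also match $2(2-\alpha)<d$ correctly with the hypotheses.

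\textbf{The gap: the kinetic commutator step.} The Balakrishnan argument does not give the bound on $[\langle x\rangle^\mu,|D|^\alpha]$, and the reason you give for convergence is not correct. With $w=\langle x\rangle^\mu$ and $c_\alpha=\sin(\pi\alpha/2)/\pi$,
\begin{align*}
  [w,|D|^\alpha] = c_\alpha\int_0^\infty s^{\alpha/2}(s+D^2)^{-1}\bigl(2i\nabla w\cdot D+\Delta w\bigr)(s+D^2)^{-1}\,ds .
\end{align*}
Since $\nabla w$ and $\Delta w$ sit between the two resolvents, the outer factors $\langle D\rangle^{-\alpha/2}$ give nothing at low frequencies. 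As $s\to0$, norm bounds give an integrand of size $s^{(\alpha-3)/2}$ for the first-order piece and $s^{\alpha/2-2}$ for the $\Delta w$ piece. The latter diverges for every $\alpha\le2$. So the low-frequency singularity of $\partial_\xi^2|\xi|^\alpha$ already enters at $\mu\le1$, not only for $\mu>1$. The Helffer--Sj\"ostrand variant meets the same problem.

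Using $|\partial^\beta w|\lesssim\langle x\rangle^{\mu-|\beta|}$ together with Hardy's inequality improves both pieces to $s^{(\alpha-\mu)/2-1}$. So your route can be completed only when $\alpha>\mu$. The case $\alpha=\mu=1$ is included in the lemma, and your iteration to $\mu\in(1,2]$ needs it as the base case when $\alpha=1$. There the method fails outright. $[\langle x\rangle,|D|]$ is a Calder\'on commutator with kernel $c(\langle y\rangle-\langle x\rangle)|x-y|^{-d-1}$, of size $|x-y|^{-d}$ at every scale. Its boundedness rests on cancellation that such norm bounds cannot see.

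Two smaller points. Moving $\nabla\langle x\rangle^\mu$ past the resolvent is unnecessary for $\mu\le1$, since $\langle x\rangle^{-\mu}\le1$. And the negative-exponent case is not trivial: by duality it is equivalent to the positive one with $\bar z$.

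\textbf{A repair.} Use polynomial weights at the endpoints $\mu\in\{1,2\}$ and interpolate.

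For $\mu=1$, use $\|\langle x\rangle u\|^2=\|u\|^2+\sum_j\|x_ju\|^2$ and the exact commutator $[x_j,|D|^\alpha]=i\alpha|D|^{\alpha-2}D_j$. It maps $H^{\alpha-1}$ to $L^2$ precisely because $\alpha\ge1$. Regularize by $a_{L,j}=x_j\eta(x/L)$ as in Steps~1--2 of Lemma~\ref{lem:refined-first-moment}. There \eqref{eq:first-moment-Fourier-bound} and the scaling $\||\zeta|^s\widehat a_{L,j}\|_1=L^{1-s}\||\zeta|^s\widehat a_{1,j}\|_1$ make $\|[a_{L,j},(z-H_\kappa)^{-1}]\|$ bounded uniformly in $L$, polynomially in $\langle z\rangle$ and $|\im z|^{-1}$. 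Then let $L\to\infty$.

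For $\mu=2$, use $\langle x\rangle^2=1+\sum_jx_j^2$. Commuting $x_j$ twice through $R=(z-H_\kappa)^{-1}$ terminates exactly. It produces $|D|^{\alpha-2}D_j$-terms, handled by the case $\mu=1$, and the second commutator $[x_j,[x_j,|D|^\alpha]]=-M_{jj}$. For the latter,
\[
M_{jj}R\langle x\rangle^{-2}=\bigl(M_{jj}\langle x\rangle^{\alpha-2}\bigr)\bigl(\langle x\rangle^{2-\alpha}R\langle x\rangle^{-(2-\alpha)}\bigr)\langle x\rangle^{-\alpha},
\]
which closes by \eqref{eq:Mjk-Hardy-dyadic} and the first part, since $2-\alpha\le1$. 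Intermediate $\mu$ then follow by Stein--Weiss interpolation of weighted $L^2$ spaces.

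This also removes a second weakness in your $\mu>1$ step. With the weight $\langle x\rangle^\mu$ the commutator expansion does not terminate. Unless you keep the remainder exact, it produces multipliers of order $\alpha-3$. In the analogous computation (Step~4 of Lemma~\ref{boundr12}) such terms require the stronger condition $\alpha>3-d/2$.
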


Next, we recall the following weighted propagation estimate.

\begin{lemma}[{\cite[Lemma~6.7]{IshidaMerz2026}}]
  \label{lem9}
  Let $d\in\N$, $\alpha\in[1,2]\cap(0,d)$, $\kappa<\kappa_{\rm c}$, $\rho>0$, $\mu\in[0,1]$, and $f\in C_c^\infty(0,\infty)$.
  Assume $V \in L^\infty(\R^d)$ satisfies $|V(x)| \lesssim \langle x\rangle^{-\rho}$.
  Then, for $H_\kappa=L_\kappa-V$,
  \begin{equation}
    \label{eq_lem9_1}
    \|\langle x\rangle^\mu f(H_\kappa) \me{-i t H_\kappa}\langle x\rangle^{-\mu}\|
    \lesssim_f \langle t\rangle^\mu, \quad t\in\R.
  \end{equation}
  Moreover, \eqref{eq_lem9_1} also holds for $1<\mu\leq2$ for all $d\geq3$, or $d=2$ and $\alpha>1$.
\end{lemma}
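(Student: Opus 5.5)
The plan is to prove the endpoint cases $\mu=1$ and $\mu=2$ directly and obtain intermediate $\mu$ by complex interpolation. Consider the analytic family $z\mapsto \langle t\rangle^{-z}\langle x\rangle^{z} f(H_\kappa)\me{-itH_\kappa}\langle x\rangle^{-z}$ on the strip $0\leq\re z\leq 1$, resp.\ $1\leq \re z\leq 2$. On each vertical line, $\langle x\rangle^{i\tau}$ is unitary, so Stein interpolation reduces \eqref{eq_lem9_1} to the two integer endpoints. The case $\mu=0$ is trivial, since $\|f(H_\kappa)\me{-itH_\kappa}\|\leq\|f\|_\infty$.

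For $\mu=1$, fix $g\in C_c^\infty(0,\infty)$ with $gf=f$ and write
$$
\langle x\rangle f(H_\kappa)\me{-itH_\kappa}\langle x\rangle^{-1}
= f(H_\kappa)\me{-itH_\kappa} + [\langle x\rangle, f(H_\kappa)\me{-itH_\kappa}]\langle x\rangle^{-1}.
$$
The commutator splits into two pieces.
\begin{itemize}
\item The piece $[\langle x\rangle,f(H_\kappa)]\me{-itH_\kappa}$ is handled by the Helffer--Sj\"ostrand formula \eqref{eq:hs}. It reduces to $(z-H_\kappa)^{-1}[\langle x\rangle,H_\kappa](z-H_\kappa)^{-1}$, where $[\langle x\rangle,H_\kappa]=[\langle x\rangle,|D|^\alpha]$ because the Hardy term and $V$ commute with $\langle x\rangle$.
\item The piece $f(H_\kappa)[\langle x\rangle,\me{-itH_\kappa}]$ is handled by Duhamel's formula,
$$
[\langle x\rangle,\me{-itH_\kappa}]=-i\int_0^t \me{-i(t-s)H_\kappa}[\langle x\rangle,|D|^\alpha]\me{-isH_\kappa}\,ds.
$$
After inserting $g(H_\kappa)$ on both sides, this is bounded by $|t|$ times $\sup_s\|g(H_\kappa)[\langle x\rangle,|D|^\alpha]g(H_\kappa)\|$, once the commutator of $g(H_\kappa)$ with $\langle x\rangle$ is again handled by Helffer--Sj\"ostrand.
\end{itemize}
The key input for both pieces is the bound
$$
[\langle x\rangle,|D|^\alpha]\langle D\rangle^{1-\alpha}\in\B(L^2),
$$
a Calder\'on-type commutator estimate. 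To first order, the commutator is $\nabla\langle x\rangle\cdot\alpha|D|^{\alpha-2}D$, and its symbol is bounded at low frequencies precisely because $\alpha\geq1$. Combining this with $\langle D\rangle^{\alpha-1}(z-H_\kappa)^{-1}\in\B(L^2)$, which holds since $\alpha-1\leq\alpha/2$ and $\kappa<\kappa_{\rm c}$ gives form domain $H^{\alpha/2}$, yields resolvent bounds polynomial in $|\im z|^{-1}$. These are integrable against $\bar\partial\tilde f$, and the total is $\lesssim_f\langle t\rangle$.

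For $\mu=2$ I would repeat the argument with $\langle x\rangle^2$ in place of $\langle x\rangle$. The new first-order term is $x\cdot|D|^{\alpha-2}D$, which still carries a factor of $x$. I would write it as $\langle x\rangle\cdot(\text{bounded})$ and iterate the $\mu=1$ bound inside the Duhamel integral, which gives $\int_0^t\langle s\rangle\,ds\lesssim\langle t\rangle^2$. Moving weights through resolvents uses Lemma~\ref{lem8} with $\mu\in(1,2]$.

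The main obstacle is the second-order remainder, whose symbol $\partial_{\xi_j}\partial_{\xi_k}|\xi|^\alpha\sim|\xi|^{\alpha-2}$ is singular at $\xi=0$ when $\alpha<2$. I would control it through Hardy's inequality in Fourier space, bounding $|D|^{\alpha-2}\langle x\rangle^{\alpha-2}$. This is exactly where the hypotheses $d\geq3$, or $d=2$ with $\alpha>1$, enter.
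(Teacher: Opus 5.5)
Your outline is sound. Note that this paper does not prove Lemma~\ref{lem9}; it quotes it from \cite{IshidaMerz2026}. The closest in-paper argument is the proof of Lemma~\ref{lem:refined-first-moment}, which establishes a sharpened $\mu=1$ bound, and your $\mu=1$ step is essentially that proof: Helffer--Sj\"ostrand for the commutator of the weight with $f(H_\kappa)$, Duhamel for the commutator with $\me{-itH_\kappa}$, and a bound $\|[|D|^\alpha,a]u\|\lesssim\|u\|_{H^{\alpha-1}}$ combined with $\alpha-1\leq\alpha/2$.

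The paper works with $a_{L,j}=x_j\eta(x/L)$ rather than $\langle x\rangle$. This costs nothing, since $\|\langle x\rangle u\|^2=\|u\|^2+\sum_j\|x_ju\|^2$, and it buys two things.
\begin{itemize}
\item \emph{An elementary key estimate.} Here \eqref{eq:first-moment-Fourier-bound} and the scaling $\||\zeta|^s\widehat a_{L,j}\|_1=L^{1-s}\||\zeta|^s\widehat a_{1,j}\|_1$ give uniformity in $L$. For your weight this absolute-value Fourier argument fails: $\widehat{\langle x\rangle}\sim|\zeta|^{-d-1}$ near $0$, so for $\langle x\rangle\eta(x/L)$ the constant grows like $\log L$. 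Your estimate $[\langle x\rangle,|D|^\alpha]\langle D\rangle^{1-\alpha}\in\B(L^2)$ is true, but it genuinely requires Calder\'on's commutator theorem.
\item \emph{Rigorous domain handling.} The weights are bounded, so one checks via the form representation theorem that they preserve $\D(H_\kappa)$. This domain is not explicit because of the Hardy term. One then runs Duhamel and removes the cutoff by Fatou. You manipulate $[\langle x\rangle,\me{-itH_\kappa}]$ and $[\langle x\rangle^2,\me{-itH_\kappa}]$ directly and need such a regularization too.
\end{itemize}

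For $\mu=2$ your structure is right. The Hardy bound on $|D|^{\alpha-2}\langle x\rangle^{\alpha-2}$, which requires $2-\alpha<d/2$, is exactly where the extra hypotheses come from; they match those of Lemma~\ref{lem8} for $\mu>1$.

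One step is too loose. The operator $\frac{x}{\langle x\rangle}\cdot|D|^{\alpha-2}D$ is unbounded for $\alpha>1$. So ``$\langle x\rangle\cdot(\text{bounded})$'' only makes sense once a spectral cutoff stands between $M_j:=|D|^{\alpha-2}D_j$ and the evolved state. A clean way is to put the cutoff on the right:
\begin{align*}
  \langle x\rangle^2 f(H_\kappa)\me{-itH_\kappa}\langle x\rangle^{-2}
  & = \me{-itH_\kappa}\langle x\rangle^2 f(H_\kappa)\langle x\rangle^{-2} \\
  & \quad - i\int_0^t \me{-i(t-s)H_\kappa}[\langle x\rangle^2,|D|^\alpha]f(H_\kappa)\me{-isH_\kappa}\langle x\rangle^{-2}\,ds .
\end{align*}
The first term is bounded by Lemma~\ref{lem8} with $\mu=2$ and \eqref{eq:hs}. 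Since $[\langle x\rangle^2,|D|^\alpha]=i\alpha\sum_j(2x_jM_j-[x_j,M_j])$, split, with $gf=f$,
\[
x_jM_jf(H_\kappa)=M_jg(H_\kappa)\,x_jf(H_\kappa)+M_j[x_j,g(H_\kappa)]f(H_\kappa)+[x_j,M_j]f(H_\kappa).
\]
\begin{itemize}
\item The first piece is $\lesssim\langle s\rangle$ by the $\mu=1$ bound.
\item The second is bounded; up to sign it is the adjoint of $[x_j,G]M_j$, which is shown bounded in the proof of Lemma~\ref{the7aux3}.
\item The third is your order-$(\alpha-2)$ remainder, controlled by Hardy together with the bound for $\mu=2-\alpha$.
\end{itemize}
The rule is to commute cutoffs of $H_\kappa$ past $x_j$ only, never past $M_j$. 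The commutator $[g(H_\kappa),M_j]$ brings in $[\kappa|x|^{-\alpha}+V,M_j]$, and $V$ is merely bounded.

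With the cutoff on the left of the Duhamel integrand, as in your $\mu=1$ decomposition, this is harmless at $\mu=1$, because $f(H_\kappa)[\langle x\rangle,|D|^\alpha]$ is bounded by duality. At $\mu=2$, however, it steers you toward exactly that bad commutator.

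The interpolation step is fine if you apply the three-lines lemma to matrix elements between compactly supported vectors.
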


We will also use the following refinement of Lemma~\ref{lem9}.

\begin{lemma}
  \label{lem:refined-first-moment}
  Let $\alpha\in[1,2]\cap(0,d)$, $\kappa<\kappa_{\rm c}$, and $V\in L^\infty(\R^d)$. Let $H_\kappa=L_\kappa-V$ and $f\in C_c^\infty(\R)$. Then,
  \begin{align}
    \label{eq:refined-first-moment}
    \|\langle x\rangle f(H_\kappa) \me{-itH_\kappa}\phi\|
    \leq C_{f,V} \left(\|\langle x\rangle\phi\|+|t|\|\phi\|\right),
    \quad t\in\R, \quad \phi\in\D(\langle x\rangle).
  \end{align}
\end{lemma}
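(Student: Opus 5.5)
We prove the estimate through the Heisenberg picture and the commutator $[\langle x\rangle,f(H_\kappa)]$; no time-integration argument is needed.

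\textbf{Step 1: commutator form.} Write $\phi_t=\me{-itH_\kappa}\phi$ and set
$$
\psi(t):=\me{itH_\kappa}\langle x\rangle f(H_\kappa)\me{-itH_\kappa}\phi .
$$
Formally,
$$
\frac{d}{dt}\psi(t)=\me{itH_\kappa}\,i[H_\kappa,\langle x\rangle]f(H_\kappa)\me{-itH_\kappa}\phi .
$$
Hence
$$
\|\langle x\rangle f(H_\kappa)\phi_t\|\le\|\langle x\rangle f(H_\kappa)\phi\|+|t|\sup_s\|[H_\kappa,\langle x\rangle]f(H_\kappa)\phi_s\|.
$$
The claim therefore reduces to two bounds: $\langle x\rangle f(H_\kappa)\langle x\rangle^{-1}\in\B(L^2)$, and $[H_\kappa,\langle x\rangle]f(H_\kappa)\in\B(L^2)$. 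Since $V$ is a multiplication operator, $[V,\langle x\rangle]=0$, so only $[L_\kappa,\langle x\rangle]=[|D|^\alpha,\langle x\rangle]$ contributes. The Hardy term also commutes with $\langle x\rangle$. To justify the differentiation, we regularize with $\langle x\rangle_\epsilon=\langle x\rangle(1+\epsilon\langle x\rangle)^{-1}$ and let $\epsilon\to0$, using uniform bounds.

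\textbf{Step 2: the commutator is bounded after energy localization.} Write $[|D|^\alpha,\langle x\rangle]$ in Fourier space: it is a commutator of the multiplier $|\xi|^\alpha$ with a Lipschitz function. Its leading part is $-i\,\alpha|D|^{\alpha-2}D\cdot\nabla\langle x\rangle$ plus a symmetrized remainder. The velocity symbol $\alpha|\xi|^{\alpha-2}\xi$ has size $|\xi|^{\alpha-1}$, which is bounded at low frequencies because $\alpha\ge1$. At high frequencies it is controlled by $\langle D\rangle^{\alpha-1}\lesssim\langle D\rangle^{\alpha}$. A cleaner route is
$$
\|[|D|^\alpha,\langle x\rangle]u\|\lesssim\|\langle D\rangle^{\alpha-1}u\|+\|u\|,
$$
which follows from Calderón commutator estimates for the Lipschitz function $\langle x\rangle$. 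For $\alpha=2$ it is explicit: $[-\Delta,\langle x\rangle]=-2\nabla\langle x\rangle\cdot\nabla-\Delta\langle x\rangle$. Since $\kappa<\kappa_{\rm c}$, $f(H_\kappa)$ maps $L^2$ into $\D(L_\kappa)$. From the form domain being $H^{\alpha/2}$ we get $\langle D\rangle^{\alpha/2}f(H_\kappa)\in\B$. Moreover $\alpha-1\le\alpha/2$ holds exactly when $\alpha\le2$. Together these give $[H_\kappa,\langle x\rangle]f(H_\kappa)\in\B(L^2)$.

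\textbf{Step 3: initial-time term.} Use the Helffer--Sjöstrand formula \eqref{eq:hs}:
$$
\langle x\rangle f(H_\kappa)\langle x\rangle^{-1}=\frac{1}{2\pi i}\int\bar\partial\tilde f\,\langle x\rangle(z-H_\kappa)^{-1}\langle x\rangle^{-1}\,dz\wedge d\bar z .
$$
Lemma~\ref{lem8} with $\mu=1$ bounds the integrand by $Z(z,1)$, which grows only polynomially in $|\im z|^{-1}$. Choosing $N$ large in $\tilde f_N$ makes the integral converge, so $\|\langle x\rangle f(H_\kappa)\phi\|\lesssim\|\langle x\rangle\phi\|$. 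Lemma~\ref{lem8} assumes decay of $V$, whereas here only $V\in L^\infty$ is given. We avoid this by arguing directly: we commute $\langle x\rangle$ through the resolvent,
$$
[\langle x\rangle,(z-H_\kappa)^{-1}]=(z-H_\kappa)^{-1}[\langle x\rangle,L_\kappa](z-H_\kappa)^{-1},
$$
and bound this by Step 2 with a factor $|\im z|^{-2}\langle z\rangle$. This requires only $V$ bounded.

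\textbf{Main obstacle.} The delicate point is the rigorous version of Step 2 for nonlocal $|D|^\alpha$. This means a uniform-in-$\epsilon$ bound for $[|D|^\alpha,\langle x\rangle_\epsilon]$ relative to $\langle D\rangle^{\alpha-1}$, together with domain issues at the Hardy singularity. For the bound we first try the explicit integral representation
$$
|\xi|^\alpha=c\int_0^\infty\bigl(1-\me{-s|\xi|^2}\bigr)s^{-1-\alpha/2}\,ds .
$$
This reduces the commutator to the heat-kernel commutators $[\me{s\Delta},\langle x\rangle_\epsilon]$, each of norm $\lesssim s^{1/2}$.
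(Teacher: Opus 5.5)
Your architecture is the paper's. You propagate with the bounded localized commutator $[H_\kappa,\cdot\,]F$ via Duhamel/Heisenberg. You treat the initial time by commuting the weight through $(z-H_\kappa)^{-1}$ in the Helffer--Sj\"ostrand formula, which, as you note, needs only $V\in L^\infty$; this is Step~2 of the paper. Finally you remove a regularization by monotone convergence.

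The real difference is the weight, and it moves all the difficulty into your Step~2. The paper never commutes with $\langle x\rangle$. It bounds $\|x_jF\me{-itH_\kappa}\phi\|$ for each $j$, with the truncation $a_{L,j}=x_j\eta(x/L)=L\,a_{1,j}(x/L)$. Since $\nabla a_{L,j}$ is a rescaled Schwartz function, $\||\zeta|^s\widehat a_{L,j}\|_1=L^{1-s}\||\zeta|^s\widehat a_{1,j}\|_1$ for $s\in\{1,\alpha\}$. So \eqref{eq:P-commutator} and Young's inequality give $\|[P,a_{L,j}]u\|\lesssim\|u\|_{H^{\alpha-1}}$ uniformly in $L$, by Plancherel alone. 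The paper also checks, via the form representation theorem, that $a_{L,j}$ preserves $\D(H_\kappa)$, which justifies Duhamel.

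For $\langle x\rangle_\epsilon$ this route is closed. Between the scales $1$ and $\epsilon^{-1}$, $\nabla\langle x\rangle_\epsilon$ looks like $x/|x|$, so $\|\widehat{\nabla\langle x\rangle_\epsilon}\|_1\sim\log(1/\epsilon)$. You therefore genuinely need a commutator theorem. Your version would cover any smooth weight with bounded first and second derivatives, but at the price of Calder\'on's theorem, whereas the paper's argument is elementary.

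At $\alpha=1$ you have $[|D|,a]=\sum_j\bigl(R_j[D_j,a]+[R_j,a]D_j\bigr)$. Boundedness of $[R_j,a]D_j$ for Lipschitz $a$ is exactly Calder\'on's commutator theorem, so citing it closes your proof.

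The heat-kernel reduction you propose for the main obstacle does not close it. With only $\|[\me{s\Delta},\langle x\rangle_\epsilon]\|\lesssim s^{1/2}$, the subordination integral is $\int_0^\infty s^{-(1+\alpha)/2}\,ds$, which diverges for every $\alpha$.
\begin{itemize}
\item At small $s$ you can repair it. Interpolate with $\|[\me{s\Delta},a]u\|\lesssim s\|u\|_{H^1}$ and use $u\in H^{\alpha/2}$, which $F$ provides; with only $H^{\alpha-1}$ you get a borderline $\int_0 s^{-1}\,ds$. This works for $\alpha<2$.
\item At $\alpha=1$ the large-$s$ part is $\int^\infty s^{-1}\,ds$. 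No scale-by-scale bound repairs this; summing the scales needs precisely Calder\'on's cancellation.
\end{itemize}

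For $1<\alpha<2$ there is a direct argument. In the kernel of $[|D|^\alpha,a]$, split $a(x)-a(y)=\nabla a(x)\cdot(x-y)+r(x,y)$. The first part gives a multiple of $\nabla a\cdot|D|^{\alpha-2}D$. Since $|r|\lesssim\min\{|x-y|^2,|x-y|\}$, the remainder kernel is Schur-bounded. This uses bounded second derivatives, so "Lipschitz" is the wrong hypothesis for $\alpha>1$; at $\alpha=2$ the term $(\Delta a)u$ appears.

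The cheapest fix is to replace $\langle x\rangle$ by the coordinates $x_j$, since $\langle x\rangle\le 1+\sum_j|x_j|$. For these, $[|D|^\alpha,x_j]=-i\alpha|D|^{\alpha-2}D_j$ is an explicit multiplier, and you can regularize as the paper does.
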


\begin{proof}
  Set $P=|D|^\alpha$, $H=H_\kappa$, and $F=f(H)$. Since $\kappa<\kappa_{\rm c}$ and $V$ is bounded, $H$ is bounded from below and its form domain is $H^{\alpha/2}(\R^d)$. Since $\alpha-1\leq\alpha/2$,
  \begin{align}
    \label{eq:first-moment-spectral-regularity}
    \|Fu\|_{H^{\alpha-1}}\leq C_f\|u\|.
  \end{align}
  Choose $\eta\in C_c^\infty(\R^d;[0,1])$ equal to $1$ near the origin. For $L\geq1$ and $j\in\{1,...,d\}$, define
  $$
  a_{L,j}(x)=x_j\eta(x/L).
  $$

  \medskip
  \noindent
  {\bf Step 1: $a_{L,j}$ preserves $\D(H)$.}
  For a Schwartz function $a$, Plancherel's theorem and Young's inequality yield
  \begin{align}
    \label{eq:first-moment-Fourier-bound}
    \|[P,a]u\| \leq C_\alpha\left(\left\||\zeta|\widehat a(\zeta)\right\|_1\|u\|_{H^{\alpha-1}} + \left\||\zeta|^\alpha\widehat a(\zeta)\right\|_1\|u\|\right)
  \end{align}
  which follows from the following bound in Fourier space,
  \begin{align}
    \label{eq:P-commutator}
    \left||\xi|^\alpha-|\xi-\zeta|^\alpha\right|
    \leq C_\alpha|\zeta| \left(\langle\xi-\zeta\rangle^{\alpha-1} + |\zeta|^{\alpha-1}\right), \quad \alpha>0.
  \end{align}
  For $s\in\{1,\alpha\}$, we have $\left\||\zeta|^s\widehat a_{L,j}(\zeta)\right\|_1 = L^{1-s}\left\||\zeta|^s\widehat a_{1,j}(\zeta)\right\|_1$. Thus,
  \begin{align}
    \label{eq:coordinate-cutoff-commutator}
    \|[P,a_{L,j}]u\|\leq C\|u\|_{H^{\alpha-1}},
  \end{align}
  uniformly in $L$. Multiplication by $a_{L,j}$ preserves the form domain $H^{\alpha/2}(\R^d)$ for fixed $L$, and \eqref{eq:coordinate-cutoff-commutator} also shows that $a_{L,j}$ preserves $\D(H)$: indeed, for $u\in\D(H)$, the right-hand side of the form identity
  $$
  Ha_{L,j}u = a_{L,j}Hu + [P,a_{L,j}]u
  $$
  is in $L^2$, and the representation theorem for the closed form implies $a_{L,j}u\in\D(H)$.

  \medskip
  \noindent
  {\bf Step 2: Estimate for $\|a_{L,j}F\phi\|$.}
  Let $R(z)=(z-H)^{-1}$. For $z$ in a fixed compact set and $\im z\neq0$, $\alpha-1\leq\alpha/2$ and the spectral theorem yield
  $$
  \|R(z)\|_{L^2\to H^{\alpha-1}} \leq C|\im z|^{-1}.
  $$
  Therefore,
  $$
  [a_{L,j},R(z)] = R(z)[a_{L,j},H]R(z),
  \qquad
  \|[a_{L,j},R(z)]\| \leq C|\im z|^{-2},
  $$
  uniformly in $L$. Thus, by the Helffer--Sj\"ostrand formula with an almost-analytic extension of sufficiently high order,
  \begin{align*}
    \sup_{L\geq1}\|[a_{L,j},F]\|\leq C_f.
  \end{align*}
  Thus, for $\phi\in\D(\langle x\rangle)$,
  \begin{align}
    \label{eq:position-cutoff-spectral-cutoff}
    \|a_{L,j}F\phi\| \leq \|f\|_\infty\|x_j\phi\|+C_f\|\phi\|.
  \end{align}
  
  \medskip
  \noindent
  {\bf Step 3: Estimate for $\|a_{L,j}FU(t)\phi\|$.}
  By \eqref{eq:first-moment-spectral-regularity} and \eqref{eq:coordinate-cutoff-commutator},
  \begin{align}
    \label{eq:energy-commutator-spectrally-localized}
    \sup_{L\geq1}\|[H,a_{L,j}]F\|\leq C_f.
  \end{align}
  By Duhamel's formula and Step~1,
  \begin{align*}
    a_{L,j}\me{-itH}F\phi
    = \me{-itH}a_{L,j}F\phi + i\int_0^t\me{-i(t-s)H}[H,a_{L,j}]F\me{-isH}\phi\,ds
  \end{align*}
  holds for all $\phi\in L^2$. Thus, by \eqref{eq:position-cutoff-spectral-cutoff} and \eqref{eq:energy-commutator-spectrally-localized},
  \begin{align*}
    \|a_{L,j}F\me{-itH}\phi\| \leq \|f\|_\infty\|x_j\phi\| + C_f(1+|t|)\|\phi\|.
  \end{align*}

  \medskip
  \noindent
  {\bf Step 4: Conclusion.}
  Letting $L\to\infty$ and using Fatou's lemma proves the same estimate with $a_{L,j}$ replaced with $x_j$. Summing over $j$, adding the $L^2$ bound for $F\me{-itH}$, and using $\|\phi\|\leq\|\langle x\rangle\phi\|$ proves \eqref{eq:refined-first-moment}.
\end{proof}

The proof of Theorem~\ref{asympcomplhardyplusperturbationnonexistence} also rests on the following minimal escape velocity estimate which we obtained as a corollary of abstract results in \cite{Skibsted1991}. In the following, $\sigma_{\rm p}(H)$ denotes the point spectrum of a self-adjoint operator $H$.

\begin{theorem}[{\cite[Theorem~1.5]{IshidaMerz2026}}]
  \label{the6}
  Let $d\in\N$, $\alpha\in(0,2]\cap(0,d)$, $\kappa<\kappa_{\rm c}$, and $\rho>0$.
  Assume $V\in C^\infty(\R^d)$ satisfies \eqref{ass5} for all multi-indices $\beta$.
  Let $H_\kappa=L_\kappa-V$ and $f\in C_c^\infty(0,\infty)$ with $\supp f \cap \sigma_{\rm p}(H_\kappa) = \emptyset$.
  Then, for all $\nu\geq1$ there are $\theta>0$ and $T>0$ such that
  \begin{equation}
    \label{eq_the6_1}
    \left\| \one\left(\frac{A}{t}\leq\theta\right)f(H_\kappa) \me{-i t H_\kappa}\phi\right\|
    \lesssim_{\theta,\nu,f,T,V} t^{-\nu}\|\langle A\rangle^\nu\phi\|, \quad t\geq T,
  \end{equation}
  holds for all $\phi\in\D(\langle A\rangle^\nu)$.
\end{theorem}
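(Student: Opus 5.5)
The plan is to obtain \eqref{eq_the6_1} from Skibsted's abstract minimal velocity theorem \cite{Skibsted1991}, with the dilation generator $A$ as conjugate operator. That theorem says roughly the following. Suppose $H\in C^{k}(A)$ for $k$ large depending on $\nu$, with suitably bounded iterated commutators. Suppose also that a strict Mourre estimate $g(H)\,i[H,A]\,g(H)\geq c_0\, g(H)^2$ holds on the support of an energy cutoff $g$. Then $\|\one(A/t\leq\theta)g(H)\me{-itH}\langle A\rangle^{-\nu}\|\lesssim t^{-\nu}$ for every $\theta<c_0$. So the work consists of verifying these two hypotheses for $H_\kappa=L_\kappa-V$, and then patching together energy-localized pieces to treat a general $f$.

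\textbf{Regularity.} The key structural fact is that $L_\kappa$ is homogeneous of degree $-\alpha$ under the dilation group: $\me{isA}L_\kappa\me{-isA}=\me{-\alpha s}L_\kappa$ (up to the sign convention). This holds because both $|D|^\alpha$ and $|x|^{-\alpha}$ scale the same way, so the Hardy singularity causes no problem here. Consequently $i[L_\kappa,A]=\alpha L_\kappa$ exactly, and $\ad_A^n(L_\kappa)$ is a constant multiple of $L_\kappa$. For the potential, $\ad_A^n(V)$ is a multiple of $(x\cdot\nabla)^nV$, which is bounded (indeed $\lesssim\langle x\rangle^{-\rho}$) by \eqref{ass5} for all $\beta$. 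Hence every $\ad_A^n(H_\kappa)$ is $H_\kappa$-bounded, and the group $\me{isA}$ preserves the form domain $H^{\alpha/2}$ and the domain $\D(H_\kappa)$. This gives $H_\kappa\in C^k(A)$ for all $k$, via the resolvent identity recorded before \eqref{eq:ARVf}.

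\textbf{Mourre estimate.} We have $i[H_\kappa,A]=\alpha L_\kappa+x\cdot\nabla V=\alpha H_\kappa+\alpha V+x\cdot\nabla V$. Since $V$ and $x\cdot\nabla V$ decay, the operator $g(H_\kappa)(\alpha V+x\cdot\nabla V)g(H_\kappa)$ is compact: it is a decaying function times an $H^{\alpha/2}$-smoothing operator. Fix $E\in\supp f$ with $E\notin\sigma_{\rm p}(H_\kappa)$. For $g$ supported in a small interval $I\ni E$ we get
\[
g(H_\kappa)\,i[H_\kappa,A]\,g(H_\kappa)\geq(\alpha\inf I)\,g(H_\kappa)^2+K,\qquad K\ \text{compact}.
\]
Because $\sigma_{\rm p}(H_\kappa)\cap\supp f=\emptyset$, shrinking $I$ makes $\|\one_I(H_\kappa)K\one_I(H_\kappa)\|$ as small as we like, since $\one_I(H_\kappa)\to\one_{\{E\}}(H_\kappa)=0$ strongly. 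This yields a strict estimate with $c_0=\alpha\inf I/2>0$. We then cover the compact set $\supp f\subset(0,\infty)$ by finitely many such intervals, take a subordinate partition $f=\sum_m f g_m$, and set $\theta$ to be the minimum of the corresponding constants times $1/2$. Applying Skibsted's theorem to each piece and summing gives \eqref{eq_the6_1} for $t\geq T$.

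\textbf{Main obstacle.} I expect the delicate step to be checking the precise hypotheses of \cite{Skibsted1991}. That theorem requires higher commutators of $H$ with $A$ in a form adapted to the phase-space localization $\one(A/t\leq\theta)$, with bounds relative to $\langle H\rangle$ up to order roughly $\nu+1$. One must also replace $f$ by a smooth cutoff compatible with its energy-localization scheme; in particular $\one(A/t\le\theta)$ has to be smoothed, which is harmless since $\theta$ can be decreased slightly. The exact homogeneity of $L_\kappa$ reduces all of this to the smoothness assumption \eqref{ass5} on $V$ for all $\beta$, and this is why the statement assumes $V\in C^\infty$.
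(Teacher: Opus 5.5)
Your proposal is correct and takes the same route as the paper, which obtains this estimate (in \cite{IshidaMerz2026}) as a corollary of Skibsted's abstract minimal velocity theorem with the dilation generator $A$ as conjugate operator. The ingredients you verify are what that corollary needs: exact homogeneity $i[L_\kappa,A]=\alpha L_\kappa$ and boundedness of $(x\cdot\nabla)^nV$ (giving $H_\kappa\in C^k(A)$ for all $k$); a strict Mourre estimate near each point of $\supp f$, obtained from compactness of $g(H_\kappa)(\alpha V+x\cdot\nabla V)g(H_\kappa)$ and $\supp f\cap\sigma_{\rm p}(H_\kappa)=\emptyset$; and a finite energy partition of unity together with smoothing of $\one(A/t\leq\theta)$.
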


Our proof of Theorem~\ref{asympcomplhardyplusperturbationnonexistence} uses Theorem~\ref{the6} with $V=0$.

%%%%%%%%%%%%%%%%%%%%%%%%%%%%%%%%%%%%%%%%
%%%%%%%%%%%%%%%%%%%%%%%%%%%%%%%%%%%%%%%%

\section{Proof of (\ref{eq:the7}) in Theorem~\ref{the7}}
\label{s:maximalvelocity}

In this section, we prove the time-integrated maximal velocity estimate \eqref{eq:the7} in Theorem~\ref{the7}.
First note that by decomposing $f$ into its real and imaginary parts, it suffices to consider real-valued $f\in C_c^\infty(0,\infty)$, which we assume from now on.

The proof principally relies on the analysis of the time-evolution of a suitable propagation observable: It should measure whether $|x|/t$ is above a certain threshold for energies belonging to a given interval. To make this precise, we need some preparation. Fix $q\in C_c^\infty((1/2,4);[0,1])$ such that
$$
q(r)=1 \quad\text{for }1\leq r\leq2.
$$
For $n\in\N$, set $R_n:=2^n$
and define
\begin{align}
  \chi_n(s) :=
  \begin{cases}
    q\left(\sqrt{s}/R_n\right), & s\geq0, \\
    0, & s<0.
  \end{cases}
\end{align}
Since $q$ vanishes in a neighborhood of the origin, $\chi_n\in C_c^\infty(\R_+)$. Moreover,
\begin{align}
  \label{eq:dyadic-cutoff-plateau}
  \chi_n(s)=1
  \quad\text{if}\quad
  R_n^2\leq s\leq R_{n+1}^2
\end{align}
and
\begin{align}
  \label{eq:dyadic-cutoff-support}
  \supp\chi_n \subset \left[\frac{R_n^2}{4},16R_n^2\right].
\end{align}
Equivalently,
\begin{align}
  \label{eq:dyadic-spatial-cutoff}
  \chi_n\left(\frac{|x|^2}{t^2}\right) = q\left(\frac{|x|}{R_n t}\right).
\end{align}
In particular, for every multi-index $\beta\in\N_0^d$,
\begin{align}
  \label{eq:dyadic-spatial-derivatives}
  \left\| \partial_x^\beta \chi_n\left(\frac{|x|^2}{t^2}\right) \right\|_\infty \lesssim_\beta
  (R_n t)^{-|\beta|},
\end{align}
uniformly in $n$ and $t>0$.
Thus, with $\chi_n$ we measure whether $|x|/t$ belongs to a dyadic shell at scale $R_n$. Next, we introduce the switch function $\R\ni s\mapsto X_n(s)$, defined by
$$
X_n(s) = \int_0^s\chi_n(r)^2\,dr \quad \text{for} \ s\geq0,
\qquad
X_n(s) = 0 \quad \text{for} \ s<0.
$$
It measures whether $|x|/t$ is above the threshold $R_n$. We have
$$
0\leq X_n(s)\leq s \quad \text{for} \ s\geq0,
\qquad
X_n'(s)=\chi_n(s)^2,
$$
and
\begin{align}
  \label{eq:dyadic-X-derivatives}
  \|X_n^{(k)}\|_\infty \lesssim_k R_n^{2-2k}, \quad k\geq1.
\end{align}
We now introduce the relevant propagation observable
\begin{align}
  \Phi_n(t) := F X_n\left(B_t\right) F, 
\end{align}
where
$$
F := f(H_\kappa),
\qquad
B_t := \frac{|x|^2}{t^2}.
$$

One of the  main difficulties here, compared with the analysis of local Schr\"odinger operator like $-\Delta+V$, is the nonlocality of $|D|^\alpha$ when $\alpha<2$. Indeed, for $\alpha=2$, commutators of $-\Delta$ with a non-compactly supported switch function (like $X_n$ here) involve only derivatives of the switch function. Thus, even if the switch is one at  infinity, its commutator with $-\Delta$ is supported in the bounded transition region of the switch. This allows to directly control the whole region $|x|/t\geq\Theta$, without introducing an upper cutoff. For $\alpha<2$, this localization property is lost since $[|D|^\alpha,X_n]$ or $[|D|^\alpha,\chi_n^2]$ are nonlocal, hence the remainder terms arising in the commutator expansions require more care.
The Hardy potential, while only being singular at the origin, also still needs to be treated carefully. Furthermore, the commutator expansions generate Fourier multipliers of the order $\alpha-1$, which explains why the case $\alpha<1$ is even more difficult and requires a separate low-frequency analysis.
The decomposition of the outgoing region into dyadic shells and the energy localization allow us to obtain estimates which are summable in $n$ and prove the maximal velocity estimate still without upper cutoff.

%%%%%%%%%%%%%%%%%%

\subsection{Preparations to control the time evolution of $\Phi_n(t)$}

We control the time evolution of $\Phi_n(t)$ by considering its Heisenberg derivative
\begin{align}
  \mathbb{D}_{H_\kappa} \Phi_n(t)
  = \frac{\partial \Phi_n(t)}{\partial t} + i[H_\kappa, \Phi_n(t)].
\end{align}
In the following, commutators containing $H_\kappa$ or unbounded Fourier multipliers are interpreted first as quadratic forms. Since $\kappa<\kappa_{\rm c}$ and $V$ is bounded, the form domain of $H_\kappa$ is $H^{\alpha/2}(\R^d)$ and $C_c^\infty(\R^d\setminus\{0\})$ is a form core. All commutator identities below are first established on $C_c^\infty(\R^d\setminus\{0\})$.

The function $X_n$ is bounded and smooth, and all its derivatives are compactly supported. However, since $X_n$ is not compactly supported, we cannot use the ordinary Helffer--Sj\"ostrand formula \eqref{eq:hs}; likewise the version in \cite[Lemma~5.3]{IshidaMerz2026} which applies to $S^0(\R)$-symbols is unavailable since it requires that the commutator is taken with a bounded operator, which is not the case here. Thus, to proceed, we note that the commutator $[|D|^\alpha,X_n(B_t)]$ does not change if we subtract a constant from $X_n(B_t)$. Hence, instead of $X_n(B_t)$, we are going to consider $Y_n(B_t)$ with
\begin{align}
  Y_n(s) := X_n(s)-c_n\eta_n(s),
\end{align}
where $c_n:=\lim_{s\to\infty}X_n(s)=\int_0^\infty\chi_n(r)^2\,dr$, $\eta_n(s)=\eta_*(s/R_n^2)$, and $\eta_*\in C^\infty(\R;[0,1])$ is such that
$$
\eta_*(s)=0\quad\text{for}\ s\leq-2,
\qquad\text{and}\qquad
\eta_*(s)=1\quad\text{for}\ s\geq-1.
$$
Since $X_n$ vanishes on $(-\infty,0]$ and is constant for sufficiently large positive $s$, we have $Y_n \in C_c^\infty(\R)$. Moreover, with $X_*(s)=R_n^{-2}X_n(R_n^2s)$ and $c_*:=R_n^{-2}c_n$, we see
$$
Y_n(s)=R_n^2Y_*(s/R_n^2),
\qquad
Y_*(s):=X_*(s)-c_*\eta_*(s),
$$
where $Y_*$ is independent of $n$. Since $B_t=|x|^2/t^2\geq0$,
\begin{align}
  \label{eq:Yn-Xn-exact-repair}
  \eta_n(B_t) = 1,
  \qquad
  X_n(B_t) = Y_n(B_t) + c_n,
  \qquad
  Y_n'(B_t) = X_n'(B_t) = \chi_n(B_t)^2.
\end{align}
Thus,
$$
F[H_\kappa,X_n(B_t)]F = F[H_\kappa,Y_n(B_t)]F
$$
and, likewise,
$$
\partial_t X_n(B_t) = \partial_t Y_n(B_t).
$$
Thus, it is just $Y_n$ that enters into the Helffer--Sj\"ostrand calculus. To apply it, let $\widetilde Y_{*,N}$ be a compactly supported almost-analytic extension of $Y_*$ of order $N$, and set
$$
\widetilde Y_{n,N}(z) := R_n^2 \widetilde Y_{*,N}(z/R_n^2).
$$
To work with the Helffer--Sj\"ostrand formula, we need good control on $\widetilde Y_{n,N}(z)$. First,
$$
|\bar\partial\widetilde Y_{*,N}(u+iv)| \lesssim_N \langle u\rangle^{-N-1}|v|^N \one_{|v|\leq2\langle u\rangle},
$$
and therefore,
\begin{align}
  \label{eq_the7_3}
  \begin{split}
  |\bar\partial\widetilde Y_{n,N}(z)|
  & \lesssim_N \left|\frac{\im z}{R_n^2}\right|^N \left\langle\frac{\re z}{R_n^2}\right\rangle^{-N-1} \cdot \one\left( |\im z| \leq 2R_n^2 \left\langle\frac{\re z}{R_n^2}\right\rangle \right).
  \end{split}
\end{align}
Thus, for $k\in\N_0$ and $k+1<m \leq N$, we obtain
\begin{align}
  \label{eq:Yn-AA}
  \int_{\mathbb C} |\overline\partial\widetilde Y_{n,N}(z)| |\im z|^{-m} \langle z\rangle^k\,d^2z
  \lesssim_{N,m,k} R_n^{4-2m+2k}
\end{align}
with $2d^2z$ being the total variation of $dz\,\wedge d\overline z$.
Introducing the unscaled functions
\begin{align}
  \chi_*(s) :=
  \begin{cases}
    q(\sqrt{s}), & s\geq0,\\
    0, & s<0,
  \end{cases}
\end{align}
we can write
\begin{align}
  \label{eq:dyadic-X-scaling}
  \chi_n(s) = \chi_*\left(\frac{s}{R_n^2}\right).
\end{align}
Similarly as before, choose a compactly supported almost analytic extension $\widetilde\chi_{*,N}$ of $\chi_*$ and define
$$
\widetilde\chi_{n,N}(z) := \widetilde\chi_{*,N}\left(\frac{z}{R_n^2}\right).
$$
Then, for $k\in\N_0$ and $k+1<m\leq N$,
\begin{align}
  \label{eq:dyadic-chi-AA-integral}
  \int_{\C} |\bar\partial\widetilde\chi_{n,N}(z)| |\im z|^{-m} \langle z\rangle^k \,d^2z
  \lesssim_{N,m} R_n^{2-2m +2k}.
\end{align}

To apply the Helffer--Sj\"ostrand formula for $F[H_\kappa,Y_n(B_t)]F$, note that since $f$ is compactly supported, $H_\kappa F=FH_\kappa\in\mathcal B(L^2)$. For $M\in\mathcal B(L^2)$, we use the notation
\begin{align}
  \label{eq:localized-H-commutator-the7aux2}
  F[H_\kappa,M]F := (H_\kappa F)MF-FM(H_\kappa F).
\end{align}
This defines a bounded operator without assuming that $M$ preserves $\D(H_\kappa)$. When both $M$ and $M^*$ preserve the form domain, it agrees with the spectrally localized form commutator. Thus, with $R_t(z) := (z-B_t)^{-1}$, we have for the integrand in the Helffer--Sj\"ostrand formula,
\begin{align}
  \label{eq:sandwiched-resolvent-commutator-the7aux2}
  F[H_\kappa,R_t(z)]F = F R_t(z)[|D|^\alpha,B_t]R_t(z)F
\end{align}
in the sense of quadratic forms.
Indeed, $R_t(z)$ and $B_tR_t(z)$, together with their adjoints, preserve $H^{\alpha/2}(\R^d)$: these multipliers and their first derivatives are bounded for fixed $z\notin\R$ and $t>0$. Let $\mathfrak h$ denote the closed form of $H_\kappa$. Then, for $u,v\in H^{\alpha/2}(\R^d)$,
\begin{align*}
  \mathfrak h[R_t(\bar z)u,B_tR_t(z)v] - \mathfrak h[B_tR_t(\bar z)u,R_t(z)v] 
  = \mathfrak h[u,R_t(z)v] -\mathfrak h[R_t(\bar z)u,v],
\end{align*}
by $B_tR_t(z)=zR_t(z)-1$. The potential contributions on the left cancel. Sandwiching with $F$ therefore gives \eqref{eq:sandwiched-resolvent-commutator-the7aux2}. Moreover,
$$
\|F[H_\kappa,R_t(z)]F\| \leq 2\|H_\kappa F\|\,\|f\|_\infty\,|\im z|^{-1}.
$$
Since $\widetilde Y_{n,N}$ is compactly supported and $N$ can be chosen sufficiently large, the Helffer--Sj\"ostrand formula \eqref{eq:hs} may now be used and yields
\begin{align}
  \label{eq:HS-Yn}
  F[H_\kappa,Y_n(B_t)]F
  & = \frac{1}{2\pi i} \int_{\mathbb C} \overline\partial\widetilde Y_{n,N}(z)\, F R_t(z)[|D|^\alpha,B_t]R_t(z)F\, dz\wedge d\overline z,
\end{align}
where the localized commutator integral converges absolutely in operator norm.

Note also that $Y_n(B_t)$, $\chi_n(B_t)$, and their first spatial derivatives preserve $H^{\alpha/2}(\R^d)$ since $\alpha\leq2$ and the corresponding multipliers belong to $W^{1,\infty}$. Moreover, for each fixed $n$, the map $t\mapsto X_n(B_t)$ is continuously differentiable in operator norm, with $\partial_t X_n(B_t) = -\frac{2}{t}B_t\chi_n(B_t)^2$. Since $f$ is compactly supported, $t\mapsto f(H_\kappa)\me{-itH_\kappa}$ is also continuously differentiable in operator norm. Thus, the Heisenberg derivative of
$$
\Phi_n(t) = FX_n(B_t)F = FY_n(B_t)F + c_nF^2
$$
and the associated commutator calculations below are first understood as a bounded linear operator from $H^{\alpha/2}(\R^d)$ to $H^{-\alpha/2}(\R^d)$. The identity for the Heisenberg derivative follows from ordinary differentiation for every initial vector in $L^2$, and has the bounded representative
$$
i(H_\kappa F)X_n(B_t)F - iF X_n(B_t)(H_\kappa F) - \frac{2}{t}F B_t\chi_n(B_t)^2F.
$$
We now estimate the Heisenberg derivative of $\Phi_n(t)$.

\begin{lemma}
  \label{the7aux2}
  Let $d\in\N$, $\alpha\in[1,2]\cap(2-d/2,d)$, $\kappa<\kappa_{\rm c}$, and assume the hypotheses on $V$ imposed in Theorem~\ref{the7}. Let $B_t$, $F$, and $\Phi_n(t)$ be defined as above. Then,
  \begin{align}
    \label{eq:the7aux2}
    \begin{split}
      \mathbb D_{H_\kappa}\Phi_n(t)
      & = F \chi_n(B_t) \left[ \frac{\alpha}{t^2} \left( x\cdot|D|^{\alpha-2}D + |D|^{\alpha-2}D\cdot x \right) - \frac{2|x|^2}{t^3} \right] \chi_n(B_t)F \\
      & \quad + \ce_{1,n}(t) + \ce_{2,n}(t)
    \end{split}
  \end{align}
  as a quadratic form identity on $H^{\alpha/2}(\R^d)$, where
  \begin{align}
    \ce_{1,n}(t) & := \frac1{2\pi} F \left(\int_{\C} \bar\partial\widetilde Y_{n,N}(z) (z-B_t)^{-2} \,\ad_{B_t}^2(|D|^\alpha) (z-B_t)^{-1} \,dz\wedge d\bar z \right) F, \\
    \ce_{2,n}(t) & := i F \chi_n(B_t) [\chi_n(B_t),[|D|^\alpha,B_t]] F,
  \end{align}
  with $\widetilde Y_{n,N}(z)$ being an almost analytic extension of $Y_n$ of sufficiently large order $N$.
  The quadratic forms corresponding to $\me{itH_\kappa}\ce_{1,n}(t)\me{-itH_\kappa}$ and $\me{itH_\kappa}\ce_{2,n}(t)\me{-itH_\kappa}$ are estimated in Lemma~\ref{boundr12}.
\end{lemma}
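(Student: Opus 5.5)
The plan is to start from the bounded representative of the Heisenberg derivative recorded just before the lemma:
$$
\mathbb D_{H_\kappa}\Phi_n(t) = iF[H_\kappa,Y_n(B_t)]F - \frac{2}{t}FB_t\chi_n(B_t)^2F .
$$
I will expand the commutator term through the Helffer--Sj\"ostrand representation \eqref{eq:HS-Yn}. The time derivative term already equals $F\chi_n(B_t)\bigl(-2|x|^2/t^3\bigr)\chi_n(B_t)F$, since $B_t$ commutes with $\chi_n(B_t)$. So everything reduces to analysing
$$
\frac{1}{2\pi}\int_{\C}\bar\partial\widetilde Y_{n,N}(z)\,FR_t(z)[|D|^\alpha,B_t]R_t(z)F\,dz\wedge d\bar z .
$$

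First I move the left resolvent past the commutator $C:=[|D|^\alpha,B_t]$. Using $R_t(z)C=CR_t(z)+R_t(z)[B_t,C]R_t(z)$, the integrand splits into two pieces.
\begin{itemize}
\item The piece $CR_t(z)^2$ integrates, via \eqref{eq:hsderivative} with $n=1$, to $CY_n'(B_t)=C\chi_n(B_t)^2$.
\item The piece $R_t(z)[B_t,C]R_t(z)^2$ involves $[B_t,[|D|^\alpha,B_t]]=\ad^2_{B_t}(|D|^\alpha)$ up to sign. Commuting $R_t$ to the right instead, or equivalently symmetrising, gives exactly the displayed $\ce_{1,n}(t)$ with $(z-B_t)^{-2}$ on the left; the prefactor $i/(2\pi i)\cdot$sign $=1/(2\pi)$ has to be checked.
\end{itemize}
Convergence of these integrals in the localized sense uses \eqref{eq:Yn-AA}. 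The resulting terms are bounded operators after sandwiching with $F$, because $\ad^2_{B_t}(|D|^\alpha)$ is the multiplier $-t^{-4}\sum_{j,k}4x_jx_k\,\partial_{\xi_j}\partial_{\xi_k}|\xi|^\alpha(D)$ plus lower order terms. Here the Hardy inequality in Fourier space with $\alpha>2-d/2$ controls $|D|^{\alpha-2}$ against weights; I will only need this at the level of quadratic forms on $H^{\alpha/2}$.

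Next I symmetrise $iC\chi_n^2$. Write
$$
iC\chi_n^2=i\chi_nC\chi_n+i[C,\chi_n]\chi_n .
$$
Since the total expression is selfadjoint, I will average it with its adjoint. This produces $i\chi_n C\chi_n$ together with a double commutator term $i\chi_n[\chi_n,C]$ (up to sign conventions), which is $\ce_{2,n}(t)$. Any leftover antisymmetric parts must cancel against the imaginary part of the $\ce_{1,n}$ contribution; ensuring the bookkeeping closes is the main care point.

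Finally, I compute $iC$ explicitly:
$$
i[|D|^\alpha,|x|^2]/t^2=\frac{\alpha}{t^2}\bigl(x\cdot|D|^{\alpha-2}D+|D|^{\alpha-2}D\cdot x\bigr).
$$
This follows from $[|D|^\alpha,x_j]=-i\alpha|D|^{\alpha-2}D_j$, with symmetric ordering, first on $C_c^\infty(\R^d\setminus\{0\})$ and then extended by the form core property.

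The main obstacle is justifying all identities as forms on $H^{\alpha/2}$: $C$ is unbounded and $|D|^{\alpha-2}D$ has low-frequency singular symbol. For $\alpha\geq1$ that symbol is bounded, and the energy cutoff $F$ together with the weighted estimates of Lemma~\ref{lem8} controls the $x$ factors inside the compact support of $\chi_n(B_t)$.
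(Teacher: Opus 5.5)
Your proposal follows the paper's proof: \eqref{eq:HS-Yn}, commuting one $B_t$-resolvent past $C_t=[|D|^\alpha,B_t]$ so that \eqref{eq:hsderivative} produces $Y_n'(B_t)=\chi_n(B_t)^2$, splitting off the double commutator, and computing $i[|D|^\alpha,B_t]$ explicitly. The bookkeeping you leave open closes at once if you commute the \emph{right} resolvent to the left, $R_t(z)C_tR_t(z)=R_t(z)^2C_t+R_t(z)^2[C_t,B_t]R_t(z)$, which yields $iF\chi_n(B_t)^2C_tF+\ce_{1,n}(t)$ with exactly the prefactor $1/(2\pi)$, and then use $i\chi_n(B_t)^2C_t=i\chi_n(B_t)C_t\chi_n(B_t)+i\chi_n(B_t)[\chi_n(B_t),C_t]$ to obtain $\ce_{2,n}(t)$ directly; your ordering instead produces mirror-image remainders (formally $\ce_{1,n}(t)^*$ and $\ce_{2,n}(t)^*$) that agree with $\ce_{1,n}(t)+\ce_{2,n}(t)$ only in sum, so keep one ordering throughout and no averaging or cancellation argument is needed.
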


We postpone the proof of this lemma after the proof of Theorem~\ref{the7}.
In the next lemma, we estimate the principal term in \eqref{eq:the7aux2}.

\begin{lemma}
  \label{the7aux3}
  Let $d\in\N$, $\alpha\in[1,2]$, $\kappa<\kappa_{\rm c}$, and assume the hypotheses on $V$ imposed in Theorem~\ref{the7}. Let $B_t$, $F$, and $\Phi_n(t)$ be defined as above. Then, there exists $n_0=n_0(d,\alpha,\kappa,\rho,f,V)\in\N$ such that, for all $n\geq n_0$,
  \begin{align}
    \begin{split}
      & F\chi_n(B_t) \left[ \frac{\alpha}{t^2} \left( x\cdot|D|^{\alpha-2}D + |D|^{\alpha-2}D\cdot x \right) - \frac{2|x|^2}{t^3} \right] \chi_n(B_t)F \\
      & \quad \leq -\frac{cR_n^2}{t} F \chi_n(B_t)^2 F + \frac{C}{t^2} + \ce_{3,n}(t),
    \end{split}
  \end{align}
  where $c,C>0$ are independent of $n\geq n_0$, and
  \begin{equation}
    \ce_{3,n}(t) := \frac{\alpha}{t^2} F [G,\chi_n(B_t)] x\cdot|D|^{\alpha-2}D \chi_n(B_t) F + \mathrm{h.c.},
  \end{equation}
  with $G:=g(H_\kappa)$ and $g\in C_c^\infty((0,\infty);\R)$ being equal to $1$ on a neighborhood of $\supp f$. The quadratic form corresponding to $\me{itH_\kappa}\ce_{3,n}(t)\me{-itH_\kappa}$ is estimated in Lemma~\ref{boundr3}.
\end{lemma}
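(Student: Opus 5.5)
We split the velocity term into a bounded-energy piece and a large-$|x|$ piece. The key point is that on $\supp\chi_n(B_t)$ we have $|x|\geq R_nt/2$. Hence $-\frac{2|x|^2}{t^3}\chi_n^2 \leq -\frac{R_n^2}{2t}\chi_n^2$. The first-order term, by contrast, is only of size $|x|/t^2\cdot\||D|^{\alpha-1}\|$ on energy-localized states, which is $\lesssim R_n/t$. So for large $n$ the negative quadratic term dominates the linear one.

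\textbf{Inserting the energy cutoff.} Since $g=1$ near $\supp f$, we have $F=GF=FG$. We write
$$\chi_n(B_t)F=\chi_n(B_t)GF=G\chi_n(B_t)F+[\chi_n(B_t),G]F.$$
We substitute this into the term $\frac{\alpha}{t^2}F\chi_n\, x\cdot|D|^{\alpha-2}D\,\chi_nF$ on the left factor only. This produces exactly $\ce_{3,n}(t)$, together with its hermitian conjugate from the other symmetrized piece; note $[G,\chi_n]=-[\chi_n,G]$. What remains is
$$\frac{\alpha}{t^2}\bigl(F\chi_n G\, x\cdot|D|^{\alpha-2}D\,\chi_nF+\mathrm{h.c.}\bigr).$$
Next we commute $x$ to the left through $G$, or equivalently write $x\cdot|D|^{\alpha-2}D=\sum_j |D|^{\alpha-2}D_j x_j+ $ (a bounded commutator $[x_j,|D|^{\alpha-2}D_j]$). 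That commutator is a multiplier of order $\alpha-2\le0$ whose symbol is homogeneous of degree $\alpha-2$. When $\alpha<2$ it is singular at $0$, but $\chi_n x_j$ sits on the right and we can instead use the form $|D|^{\alpha-2}D\cdot x$ directly. Concretely, we use the symmetric rewriting
$$x\cdot|D|^{\alpha-2}D+|D|^{\alpha-2}D\cdot x=2\re\bigl(x\cdot|D|^{\alpha-2}D\bigr),$$
so the remainder is only a commutator term $\sum_j[x_j,\partial_{\xi_j}|\xi|^\alpha(D)]/\alpha$. That term is a multiplier of order $\alpha-2$, and after spectral localization it is bounded via Hardy in Fourier space when $\alpha>2-d/2$, or trivially when $\alpha=2$. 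Including the prefactor $t^{-2}$, it contributes to $C/t^2$.

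\textbf{Cauchy--Schwarz on the main cross term.} With $u=\chi_nF\psi$, the main term is bounded by $\frac{2\alpha}{t^2}\|xu\|\,\|G|D|^{\alpha-2}D\,\chi_nF\psi\|$. Since $G$ is spectrally localized and $\kappa<\kappa_{\rm c}$, $V$ bounded, we get $\|G|D|^{\alpha-1}\|\lesssim1$: the operator $|D|^{\alpha-1}\langle D\rangle^{-\alpha/2}$ is bounded because $\alpha-1\le\alpha/2$ and $\alpha\ge1$, and $\langle D\rangle^{\alpha/2}G$ is bounded. Hence the cross term is at most
$$\frac{2\alpha C_g}{t^2}\|xu\|\|u\|\leq \frac{\epsilon\|xu\|^2}{t^3}+\frac{C_\epsilon}{t}\|u\|^2 .$$
Combining this with $-\frac{2}{t^3}\|xu\|^2$ and taking $\epsilon=1$ leaves $-\frac1{t^3}\|xu\|^2+\frac{C}{t}\|u\|^2$. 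Since $|x|\ge R_nt/2$ on $\supp\chi_n$, we have $\|xu\|^2\ge \frac{R_n^2t^2}{4}\|u\|^2$. This gives
$$\leq -\Bigl(\frac{R_n^2}{4}-C\Bigr)\frac1t\|u\|^2.$$
Choosing $n_0$ with $R_{n_0}^2\ge 8C$ yields $c=1/8$.

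\textbf{Main obstacle.} The delicate point is justifying these manipulations as forms on $H^{\alpha/2}$: the operators $x\chi_n(B_t)$ and the order-$(\alpha-1)$ multiplier are unbounded, and the commutator remainder must be controlled uniformly in $n$. The operator $x\chi_n(B_t)$ is bounded for fixed $t$, with norm $\lesssim R_nt$, so all expressions are bounded once sandwiched by $F$. The dependence of the constant $C_g$ on $g$, and hence on $f$, is the reason $n_0$ depends on $f$.
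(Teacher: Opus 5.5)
Your overall plan matches the paper's: insert $G$ to split off $\ce_{3,n}(t)$, bound the velocity cross term using $\|GM_j\|<\infty$ with $M_j=|D|^{\alpha-2}D_j$, and absorb it into $-2|x|^2/t^3$ using $|x|\gtrsim R_nt$ on $\supp\chi_n$. The commutator step in between, however, has a gap.

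After inserting $G$, the remaining term is $\frac{\alpha}{t^2}\sum_j F\chi_n G\,x_jM_j\chi_nF$ plus its adjoint. Your Cauchy--Schwarz bound $\frac{2\alpha}{t^2}\|xu\|\,\|GMu\|$ is a bound for $\langle x_ju,GM_ju\rangle$, i.e.\ for the ordering $x_jGM_j$. Passing from one ordering to the other produces $\frac{\alpha}{t^2}F\chi_n[G,x_j]M_j\chi_nF$ plus its adjoint, and you never estimate this term. Commuting $x_j$ through $G$ is not ``equivalent'' to commuting $x_j$ through $M_j$. The operator $\sum_j[x_j,M_j]=i(d+\alpha-2)|D|^{\alpha-2}$ that you discuss is a different object and does not arise in the ordering you actually use.

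The missing ingredient is $[x_j,G]M_j\in\B(L^2)$. The Hardy potential and $V$ commute with $x_j$, and $[x_j,|D|^\alpha]=i\alpha M_j$, so the Helffer--Sj\"ostrand formula gives
\[
[x_j,G]=\frac{\alpha}{2\pi}\int_{\C}\bar\partial\tilde g(z)\,(z-H_\kappa)^{-1}M_j(z-H_\kappa)^{-1}\,dz\wedge d\bar z .
\]
Here $(z-H_\kappa)^{-1}M_j$ and $M_j(z-H_\kappa)^{-1}$ are bounded, with polynomial growth in $|\im z|^{-1}$. This is because $|\xi|^{\alpha-1}\langle\xi\rangle^{-\alpha/2}$ is bounded for $1\le\alpha\le2$, and $(z-H_\kappa)^{-1}$ maps $L^2$ into $H^{\alpha/2}$ when $\kappa<\kappa_{\rm c}$. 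The leftover is then at most $Ct^{-2}\|\chi_nF\psi\|^2\le Ct^{-2}\|\psi\|^2$, with no Hardy inequality needed.

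Suppose you instead really commute $x_j$ past $M_j$, as in your ``symmetric rewriting''. Then the leftover is $-\frac{2\alpha}{t^2}(d+\alpha-2)\im\langle u,G|D|^{\alpha-2}u\rangle$ with $u=\chi_nF\psi$, and your claim that it contributes $C/t^2$ is not justified. For $\alpha<2$ the multiplier $|D|^{\alpha-2}$ is unbounded, and the spectral cutoff in $H_\kappa$ does not localize away from $\xi=0$. Hardy's inequality together with weighted bounds for $G$ only yields
\[
|\langle u,G|D|^{\alpha-2}u\rangle|\lesssim\|\langle x\rangle^{1-\alpha/2}u\|^2\lesssim(R_nt)^{2-\alpha}\|u\|^2 .
\]
That is a contribution of size $R_n^{2-\alpha}t^{-\alpha}\|u\|^2$, not $O(t^{-2})$ uniformly in $n$. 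Since $R_n^{2-\alpha}t^{-\alpha}\le R_nt^{-1}$ for $t\ge1$, it could still be absorbed into the drift $-cR_n^2t^{-1}\|u\|^2$ for large $n$. You would have to argue this explicitly, though, and this route needs Hardy's inequality and weighted estimates for $G$, which the $[x_j,G]$ route avoids.

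The rest of your argument is fine: the bound $\|G|D|^{\alpha-1}\|<\infty$, and the AM--GM absorption using $\|xu\|\ge\frac{R_nt}{2}\|u\|$.
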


We postpone the proof of this lemma after the proof of Lemma~\ref{the7aux2}. At this point, let us note that the quadratic forms corresponding to $\me{itH_\kappa}\ce_{j,n}(t)\me{-itH_\kappa}$, $j\in\{1,2,3\}$, are bounded in terms of $\|\langle x\rangle^2\phi\|$. In order to obtain estimates involving only logarithmic moments, we will use the following lemma.
It states that for large start times $S \geq 1$, the propagation integral in the region $\{|x| \geq \Theta t\}$ beyond a threshold speed $\Theta_0$ is bounded by a constant plus terms decaying algebraically in $S$ for initial data in $\D(\langle x\rangle^2)$.

\begin{lemma}
  \label{lem:late-start}
  Assume the hypotheses of Theorem~\ref{the7}. Set
  \begin{align}
    \label{eq:integrable-remainder-exponent}
    \beta =
    \begin{cases}
      \alpha, & \quad \alpha>1, \\
      \min\{2,1+\rho\}, & \quad \alpha=1 \text{ and } V\neq0, \\
      2, & \quad \alpha = 1 \text{ and } V=0.
    \end{cases}
  \end{align}
  Then, $\beta\in(1,2]$ and there exists $\Theta_0=\Theta_0(d,\alpha,\kappa,\rho,f,V)>0$ such that for all $S\geq1$, $\Theta\geq\Theta_0$, and $\phi\in\D(\langle x\rangle^2)$,
  \begin{align}
    \label{eq:late-start-bound}
    \begin{split}
      \int_S^\infty \|\one_{\{|x|/t\geq\Theta\}}f(H_\kappa) \me{-itH_\kappa}\phi\|^2\frac{dt}{t}
      & \leq C\left(\|\phi\|^2 + S^{-2}\|\langle x\rangle\phi\|^2 + S^{1-\beta}\|\langle x\rangle^2\phi\|^2\right),
    \end{split}
  \end{align}
  where $C$ is independent of $S$, $\Theta$, and $\phi$.
\end{lemma}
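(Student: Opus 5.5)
We propose to prove Lemma~\ref{lem:late-start} by integrating the Heisenberg derivative of $\Phi_n(t)$ over $[S,T]$, summing over the dyadic shells $n\geq n_0$, and letting $T\to\infty$. Set $\phi_t:=\me{-itH_\kappa}\phi$. Then
\begin{align*}
  \langle\phi_T,\Phi_n(T)\phi_T\rangle-\langle\phi_S,\Phi_n(S)\phi_S\rangle
  = \int_S^T\langle\phi_t,\mathbb D_{H_\kappa}\Phi_n(t)\phi_t\rangle\,dt .
\end{align*}
We insert Lemma~\ref{the7aux2} and then Lemma~\ref{the7aux3}. Since $\Phi_n(T)\geq0$, this gives
\begin{align*}
  cR_n^2\int_S^T\|\chi_n(B_t)F\phi_t\|^2\frac{dt}{t}
  \leq \langle\phi_S,\Phi_n(S)\phi_S\rangle+\int_S^T\Big(\frac{C}{t^2}\|\phi\|^2+\sum_{j=1}^3|\langle\phi_t,\ce_{j,n}(t)\phi_t\rangle|\Big)dt .
\end{align*}
Dividing by $R_n^2$ is the key move: everything on the right then carries a factor $R_n^{-2}$ relative to the left, which makes the sum over $n$ converge.

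\textbf{Initial term.} We use $X_n(s)\leq s\,\one(s\geq R_n^2/4)$. By Lemma~\ref{lem:refined-first-moment},
\begin{align*}
  R_n^{-2}\langle\phi_S,\Phi_n(S)\phi_S\rangle
  \leq R_n^{-2}S^{-2}\big\||x|F\phi_S\big\|^2
  \lesssim R_n^{-2}\big(S^{-2}\|\langle x\rangle\phi\|^2+\|\phi\|^2\big).
\end{align*}
This is summable over $n$ and reproduces the first two terms of \eqref{eq:late-start-bound}. The $C/t^2$ term contributes $R_n^{-2}S^{-1}\|\phi\|^2$, which is harmless.

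\textbf{Remainder terms (main obstacle).} We need bounds for $\ce_{1,n},\ce_{2,n},\ce_{3,n}$ of the form
\begin{align*}
  |\langle\phi_t,\ce_{j,n}(t)\phi_t\rangle|\lesssim R_n^{2-\epsilon}\,t^{-\beta}\,\|\langle x\rangle^2\phi\|^2 .
\end{align*}
We take these from Lemmas~\ref{boundr12} and~\ref{boundr3}, which the statement of Lemma~\ref{the7aux2} announces. Once the factor $R_n^{-2}$ from the left is included, such a bound is summable in $n$, and $\int_S^\infty t^{-\beta}\,dt\sim S^{1-\beta}$, producing the last term of \eqref{eq:late-start-bound}. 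The mechanism behind the decay is as follows.
\begin{itemize}
  \item The double commutator $\ad^2_{B_t}(|D|^\alpha)$ contains $t^{-4}\partial_{\xi_j}\partial_{\xi_k}|\xi|^\alpha(D)\,x_jx_k$. We control it via the Helffer--Sj\"ostrand bound \eqref{eq:Yn-AA} and Hardy's inequality in Fourier space, which needs $\alpha>2-d/2$. The resulting weights $\langle x\rangle^2$ are moved onto $\phi$ by Lemma~\ref{lem9} with $\mu\leq2$. The net gain is $t^{-\alpha}$.
  \item For $\alpha=1$ this gain is not integrable. We then commute $G$ through once more, using $d\geq5$. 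The extra decay is $t^{-1}$ when $V=0$ and $t^{-\rho}$ otherwise, which gives $\beta=\min\{2,1+\rho\}$ and $\beta>1$ in all cases.
\end{itemize}
The delicate point is tracking the powers of $R_n$ in the nonlocal commutators $[\chi_n(B_t),[|D|^\alpha,B_t]]$, so that they are strictly beaten by $R_n^2$.

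\textbf{Conclusion.} We choose $\Theta_0\geq 2R_{n_0}$. For $\Theta\geq\Theta_0$,
\begin{align*}
  \one(|x|/t\geq\Theta)\leq\sum_{n\geq n_0}\chi_n(B_t)^2,
\end{align*}
since the plateaus \eqref{eq:dyadic-cutoff-plateau} cover $[R_{n_0},\infty)$. Hence
\begin{align*}
  \|\one(|x|/t\geq\Theta)F\phi_t\|^2\leq\sum_{n\geq n_0}\|\chi_n(B_t)F\phi_t\|^2 .
\end{align*}
Summing the per-shell bounds over $n$ and letting $T\to\infty$ by monotone convergence yields \eqref{eq:late-start-bound}. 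The constant $C$ is independent of $S$ and $\Theta$ because the dependence on $\Theta$ enters only through the range of $n$. Finally, $\beta\in(1,2]$ is immediate from \eqref{eq:integrable-remainder-exponent}, since $\alpha\in(1,2]$ in the first case and $\rho>0$ in the second.
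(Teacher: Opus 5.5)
Your proposal is correct and follows essentially the same route as the paper. Both integrate the Heisenberg derivative of $\Phi_n(t)$ over $[S,T]$ using Lemmas~\ref{the7aux2}, \ref{the7aux3}, \ref{boundr12} and~\ref{boundr3}, divide by $R_n^2$, bound the initial term via $X_n(s)\leq s$ and Lemma~\ref{lem:refined-first-moment}, and sum over the dyadic shells covering $\{|x|/t\geq R_{n_0}\}$. The differences are cosmetic: you ask only for remainder bounds of size $R_n^{2-\epsilon}t^{-\beta}$, whereas the lemmas actually give $R_n^{-2}t^{-\beta}\|\langle x\rangle^2\phi\|^2$ for $\ce_{1,n},\ce_{2,n}$ and $t^{-2}\|\phi\|^2$ for $\ce_{3,n}$; and you take $\Theta_0\geq 2R_{n_0}$ instead of $\Theta_0=R_{n_0}$.
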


Since $\beta>1$, the terms $S^{-2}\|\langle x\rangle\phi\|$ and $S^{1-\beta}\|\langle x\rangle^2\phi\|$ in \eqref{eq:late-start-bound} vanish as $S\to\infty$. Balancing $S$ against a suitable spatial cutoff will allow us to prove \eqref{eq:the7} for $\phi \in \D\left((\log(2+\langle x\rangle))^\gamma\right)$ (instead of for just $\phi\in\D(\langle x\rangle^2)$).
We postpone the proof of this lemma after Lemma~\ref{lem:xK}.

%%%%%%%%%%%%%%%%%%

\subsection{Proof of (\ref{eq:the7}) in Theorem~\ref{the7}}

We now give the proof of \eqref{eq:the7} in Theorem~\ref{the7}.
Write $H=H_\kappa$, $F=f(H)$, and $U(t)=\me{-itH}$. Let $\Theta_0$ be as in Lemma~\ref{lem:late-start}. In the following, we estimate
$$
(\mathcal T\phi)(t) := \one_{\{|x|/t\geq\Theta_0\}}FU(t)\phi, \quad t\geq1,
$$
in $L^2([1,\infty),dt/t;L^2(\R^d))$.

For $j\in\N_0$, we partition $\R^d$ dyadically using the sets $E_j=\{x:2^j\leq\langle x\rangle<2^{j+1}\}$. For $\phi\in L^2(\R^d)$, let $\phi_j=\one_{E_j}\phi$. We have
\begin{align}
  \label{eq:initial-shell-moments}
  \|\langle x\rangle^m\phi_j\| \leq 2^{m(j+1)}\|\phi_j\|, \quad m\geq0.
\end{align}
Let $\beta$ be as in \eqref{eq:integrable-remainder-exponent} and set
$$
p=\frac4{\beta-1}\geq4,
\qquad
S_j=2^{p(j+1)}.
$$
With this choice, we can control $S_j^{-2}2^{2(j+1)}\leq1$ and $S_j^{1-\beta}2^{4(j+1)}=1$. Thus, by Lemma~\ref{lem:late-start} applied to $\phi_j$,
\begin{align}
  \label{eq:initial-shell-late-time}
  \int_{S_j}^\infty\|\mathcal T\phi_j(t)\|^2\frac{dt}{t}
  \lesssim \|\phi_j\|^2.
\end{align}
Fix $\gamma>1/2$ and let $\phi\in\D(\log(2+\langle x\rangle)^\gamma)$.
For a finite partial sum $\phi^{(J)}=\sum_{j=0}^J\phi_j$, write
$$
\mathcal{T}\varphi^{(J)}(t) = \mathcal{U}_J(t) + \mathcal{V}_J(t),
$$
where
\begin{align*}
  \mathcal{U}_J(t) = \sum_{0\leq j\leq J,\, t<S_j}\mathcal{T}\phi_j(t),
  \qquad
  \mathcal{V}_J(t) = \sum_{0\leq j\leq J,\, t\geq S_j}\mathcal{T}\phi_j(t).
\end{align*}
Since $\|\mathcal{T}\| \leq \|f\|_\infty$ uniformly in $t$, and the elements in $\{\varphi_j\}$ are mutually orthogonal,
\begin{equation}
  \|\mathcal{U}_J(t)\|^2
  \leq \|f\|_\infty^2 \left\|\sum_{0\leq j\leq J,\, t<S_j}\phi_j\right\|^2
  = \|f\|_\infty^2\sum_{0\leq j\leq J,\, t<S_j} \|\phi_j\|^2.
\end{equation}
Thus,
\begin{equation}
  \int_1^\infty\|\mathcal{U}_J(t)\|^2 \frac{dt}{t}
  \leq \|f\|_\infty^2 \sum_{j=0}^J \|\phi_j\|^2 \int_1^{S_j}\frac{dt}{t}
  = p\, (\log2)\, \|f\|_\infty^2 \sum_{j=0}^J (j+1)\|\phi_j\|^2
\end{equation}
and hence
\begin{equation}
  \label{eq:bound-UJ}
  \|\mathcal{U}_J\|_{L^2(dt/t;L^2)}
  \lesssim_f \left(\sum_{j=0}^J(j+1)\|\phi_j\|^2\right)^{1/2}.
\end{equation}
By \eqref{eq:initial-shell-late-time} and Cauchy--Schwarz,
\begin{align}
  \label{eq:bound-VJ}
  \begin{split}
    \|\mathcal{V}_J\|_{L^2(dt/t;L^2)}
    & \leq\sum_{j=0}^J\left(\int_{S_j}^\infty \|\mathcal{T}\phi_j(t)\|^2 \frac{dt}{t}\right)^{1/2} 
      \lesssim \sum_{j=0}^J\|\phi_j\| \\
    & \lesssim \left(\sum_{j=0}^\infty(j+1)^{-2\gamma}\right)^{1/2}\left(\sum_{j=0}^J (j+1)^{2\gamma}\|\phi_j\|^2\right)^{1/2} \\
    & \lesssim_\gamma \left(\sum_{j=0}^J(j+1)^{2\gamma}\|\phi_j\|^2\right)^{1/2}
  \end{split}
\end{align}
since $\gamma>1/2$. By \eqref{eq:bound-UJ} and \eqref{eq:bound-VJ},
\begin{equation}
  \|\mathcal{T}\phi^{(J)}\|_{L^2(dt/t;L^2)}
  \lesssim_\gamma \left(\sum_{j=0}^J (j+1)^{2\gamma} \|\phi_j\|^2\right)^{1/2}
  \lesssim_\gamma \|(\log(2+\langle x\rangle))^\gamma \phi\|.
\end{equation}
In the last inequality we used $\log(2+\langle x\rangle)\sim j+1$ on $E_j$, uniformly in $j$.

\iffalse
  Minkowski's inequality and \eqref{eq:initial-shell-integrated-bound} yield
  \begin{align*}
    \|\mathcal T\phi^{(J)}\|_{L^2(dt/t;L^2)}
    & \lesssim \sum_{j=0}^J (j+1)^{1/2} \|\phi_j\| \\
    & \lesssim \left(\sum_{j\geq0}(j+1)^{1-2\gamma}\right)^{1/2} \left(\sum_{j=0}^J(j+1)^{2\gamma}\|\phi_j\|^2\right)^{1/2} \\
    & \lesssim_\gamma \left\|\left(\log(2+\langle x\rangle)\right)^\gamma \phi \right\|
  \end{align*}
  since $\gamma>1$. In the last inequality we used $\log(2+\langle x\rangle)\sim j+1$ on $E_j$, with constants independent of $j$.
\fi

Finally, since $\phi^{(J)}\to\phi$ in $L^2$ and $\|\one_{\{|x|/t\geq\Theta_0\}}FU(t)\|\leq\|f\|_\infty$, we have $\mathcal T\phi^{(J)}(t)\to\mathcal T\phi(t)$ in $L^2$ for every $t\geq1$. Fatou's lemma thus shows \eqref{eq:the7} for $\Theta=\Theta_0$. The same estimate holds for all $\Theta\geq\Theta_0$ by monotonicity.
\qed

\smallskip
Note that a simple application of Minkowski's inequality to estimate $\|\mathcal T\phi^{(J)}\|$ would have required us to take $\gamma>1$. Splitting the evolution into early and late contributions and using uniform boundedness of $\mathcal T$ and orthogonality of the not time-evolved $\phi_j$ allow us to control the ``early contribution'' $t<S_j$ more efficiently.

%%%%%%%%%%%%%%%%%%

\subsection{Proofs of Lemmas~\ref{the7aux2}--\ref{boundr3}}

It remains to prove Lemmas~\ref{the7aux2}--\ref{boundr3}. We begin with the

\begin{proof}[Proof of Lemma~\ref{the7aux2}]
  Recall $F=f(H_\kappa)$ and $B_t=|x|^2/t^2$.
  We write
  $$
  H := H_\kappa,
  \quad
  \chi_t := \chi_n(B_t),
  \quad
  R_t(z) := (z-B_t)^{-1}.
  $$
  We establish the identity as a (spectrally localized) quadratic form identity; the estimates on the remainder forms are proved in Lemma~\ref{boundr12} below.

  \medskip
  \noindent
  {\bf Step 1: Time derivative.}
  As $F$ is independent of $t$ and commutes with $H$,
  \begin{align}
    \label{eq:heisenberg-Phi-first}
    \mathbb D_H \Phi_n(t)
    = F\frac{\partial}{\partial t}X_n(B_t)F + i F[H,X_n(B_t)]F.
  \end{align}
  Since $X_n'(s)=\chi_n(s)^2$, the chain rule yields
  \begin{align}
    \label{eq:time-derivative-X}
    \frac{\partial}{\partial t} X_n(B_t) = -\frac{2|x|^2}{t^3}X_n'(B_t) = -\frac{2|x|^2}{t^3}\chi_t^2.
  \end{align}

  \medskip
  \noindent
  {\bf Step 2: Expansion of $F[|D|^\alpha,B_t]F$.}
  Let
  $$
  C_t := [|D|^\alpha,B_t].
  $$
  The following identities are first understood in the sense of quadratic forms on a common form core, such as $C_c^\infty(\R^d\setminus\{0\})$. The extensions of the final forms are justified by Lemma~\ref{boundr12}.

  First, since $[C_t,R_t(z)] = R_t(z)[C_t,B_t]R_t(z)$, we have
  \begin{align}
    \label{eq:move-C-through-resolvent}
    R_t(z)C_tR_t(z) = R_t(z)^2C_t + R_t(z)^2[C_t,B_t]R_t(z)
  \end{align}
  for the integrand in the Helffer--Sj\"ostrand formula. Moreover,
  $$
  [C_t,B_t] = [[|D|^\alpha,B_t],B_t] = \operatorname{ad}_{B_t}^2(|D|^\alpha).
  $$
  Substituting \eqref{eq:move-C-through-resolvent} into \eqref{eq:HS-Yn} yields
  \begin{align}
    \label{eq:Xnu-commutator-expansion}
    \begin{split}
      F[H,Y_n(B_t)]F
      & = F\left( \frac{1}{2\pi i} \int_{\C} \overline\partial\widetilde Y_{n,N}(z) R_t(z)^2 \,dz\wedge d\overline z \right) C_tF \\
      & \quad + \frac{1}{2\pi i} F\left( \int_{\C} \overline\partial\widetilde Y_{n,N}(z) R_t(z)^2 \operatorname{ad}_{B_t}^2(|D|^\alpha) R_t(z) \,dz\wedge d\overline z
      \right)F.
    \end{split}
  \end{align}
  By the Helffer--Sj\"ostrand formula,
  \begin{align}
    \chi_n(B_t)^2
    = Y_n'(B_t)
    = \frac{1}{2\pi i} \int_{\mathbb C} \overline\partial\widetilde Y_{n,N}(z)R_t(z)^2\, dz\wedge d\overline z.
  \end{align}
  Hence,
  \begin{align}
    \label{eq:i-Xnu-commutator}
    iF[H,Y_n(B_t)]F = i \, F\chi_t^2 C_t F + \ce_{1,n}(t),
  \end{align}
  where
  \begin{align}
    \ce_{1,n}(t) := \frac{1}{2\pi} F\left( \int_{\C} \overline\partial\widetilde Y_{n,N}(z) (z-B_t)^{-2}\,\operatorname{ad}_{B_t}^2(|D|^\alpha) (z-B_t)^{-1} \,dz\wedge d\overline z \right)F.
  \end{align}

  \medskip
  \noindent
  {\bf Step 3: Symmetrization of the principal term.}
  Next, we symmetrize the main term $iF\chi_t^2C_tF$. We have
  \begin{align}
    \label{eq:symmetrize-principal}
    i\chi_t^2 C_t = i \chi_t C_t \chi_t + i \chi_t[\chi_t,C_t].
  \end{align}
  The second term gives, after sandwiching with $F$,
  \begin{align}
    \ce_{2,n}(t) := iF\chi_t [\chi_t,[|D|^\alpha,B_t]] F.
  \end{align}
  It remains to calculate $i\chi_t C_t\chi_t$. Since $[|D|^\alpha,x_j] = -i\alpha|D|^{\alpha-2}D_j$, we obtain
  \begin{align}
    \label{eq:explicit-kinetic-commutator}
    i[|D|^\alpha,B_t]
    = \frac{\alpha}{t^2} \left( x\cdot|D|^{\alpha-2}D + |D|^{\alpha-2}D\cdot x \right).
  \end{align}

  \smallskip
  Combining \eqref{eq:heisenberg-Phi-first}, \eqref{eq:time-derivative-X}, \eqref{eq:i-Xnu-commutator}, \eqref{eq:symmetrize-principal}, and \eqref{eq:explicit-kinetic-commutator}, and using that $\chi_t$ commutes with $|x|^2$, we obtain
  \begin{align}
    \begin{split}
      \mathbb D_{H_\kappa} \Phi_n(t)
      & = F\chi_t \left[ \frac{\alpha}{t^2} \left( x\cdot|D|^{\alpha-2}D + |D|^{\alpha-2}D\cdot x \right) - \frac{2|x|^2}{t^3} \right] \chi_t F \\
      & \quad + \ce_{1,n}(t) + \ce_{2,n}(t).
    \end{split}
  \end{align}
  This proves \eqref{eq:the7aux2} as a quadratic form identity. Lemma~\ref{boundr12} below provides estimates for $\ce_{1,n}(t)$ and $\ce_{2,n}(t)$ and, in particular, justifies the extensions claimed above.
\end{proof}

\begin{proof}[Proof of Lemma~\ref{the7aux3}]
  In this proof, we merely isolate and estimate the leading term from the error term $\ce_{3,n}(t)$, which we estimate in Lemma~\ref{boundr3} below.

  \medskip
  \noindent
  {\bf Step 1: Setup and isolation of the leading term.}
  Recall $F=f(H_\kappa)$ and $G:=g(H_\kappa)$, where $g\in C_c^\infty((0,\infty);\R)$ equals $1$ on a neighborhood of $\supp f$. Thus, $F=FG=GF$. Define
  \begin{align*}
    \chi_t & := \chi_n\left(\frac{|x|^2}{t^2}\right), \\
    M_j & := |D|^{\alpha-2}D_j, \\
    Q & := x \cdot D |D|^{\alpha-2} = \sum_{j=1}^d x_jM_j.
  \end{align*}
  As $F=FG$,
  $$
  F\chi_tQ\chi_tF = F\chi_tGQ\chi_tF + F[G,\chi_t]Q\chi_tF.
  $$
  Taking adjoints and recalling that $F$, $G$, and $\chi_t$ are selfadjoint, we obtain
  \begin{align}
    \label{eq:principal-g-insertion}
    \frac{\alpha}{t^2} F\chi_t(Q+Q^*)\chi_tF
    = \frac{\alpha}{t^2} F\chi_t(GQ+\mathrm{h.c.})\chi_tF + \ce_{3,n}(t),
  \end{align}
  where
  \begin{align}
    \label{eq:R3-definition-corrected}
    \ce_{3,n}(t) := \frac{\alpha}{t^2} F[G,\chi_t]Q\chi_tF+\mathrm{h.c.}
  \end{align}

  \medskip
  \noindent
  {\bf Step 2: Bounding the leading term.}
  We now combine the first term on the right-hand side of \eqref{eq:principal-g-insertion} with the $-2|x|^2/t^3$ term to get
  \begin{align*}
    \mathcal{M}_n(t)
    := \alpha F \chi_t \frac{GQ + \text{h.c.}}{t^2} \chi_t F - F \chi_t^2 \frac{2|x|^2}{t^3} F.
  \end{align*}
  Since $|x|/t \sim R_n$ on $\supp\chi_n(B_t)$,
  \begin{align}
    \label{eq:revised-negative-drift}
    -\frac{2}{t} F\chi_t^2\frac{|x|^2}{t^2}F
    \lesssim -\frac{R_n^2}{t} F\chi_t^2F.
  \end{align}
  For the $GQ+\text{h.c.}$ term, let $u\in L^2(\R^d)$ and $v=\chi_tFu$. Since $Gx_j=x_jG+[G,x_j]$,
  \begin{align*}
    \frac1{t^2} |\langle v,Gx_jM_jv\rangle|
    \leq c\frac{R_n}{t} \|GM_j\|\,\|v\|^2 + \frac1{t^2} \|[G,x_j]M_j\|\,\|v\|^2.
  \end{align*}
  Set
  $$
  c_1 := \sum_{j=1}^d\|GM_j\|,
  \qquad
  c_2 := 2\sum_{j=1}^d\|[x_j,G]M_j\|.
  $$
  Both $c_1$ and $c_2$ are finite. Indeed, since $\alpha\leq2$,
  $$
  \langle H_\kappa\rangle^{-1}M_j
  = \left( \langle H_\kappa\rangle^{-1} \langle D\rangle^{\alpha/2} \right) \cdot \left( \langle D\rangle^{-\alpha/2}M_j \right) \in\mathcal B(L^2),
  $$
  which implies $GM_j\in\mathcal B(L^2)$, i.e., $c_1<\infty$. Furthermore, by the Helffer--Sj\"ostrand formula \eqref{eq:hs}, we get
  $$
  [x_j,G]
  = \frac{\alpha}{2\pi} \int_{\C} \bar\partial\widetilde g(z) (z-H_\kappa)^{-1} M_j (z-H_\kappa)^{-1} \,dz\wedge d\bar z.
  $$
  Hence, $[x_j,G]M_j\in\mathcal B(L^2)$, i.e., $c_2<\infty$. Thus, we conclude
  \begin{align}
    \label{eq:revised-velocity-upper}
    \frac{\alpha}{t^2} F\chi_t (GQ + \mathrm{h.c.}) \chi_tF 
    \lesssim \frac{R_n}{t} F\chi_t^2F + \frac{C}{t^2}.
  \end{align}
  Combining \eqref{eq:revised-negative-drift} and \eqref{eq:revised-velocity-upper}, we obtain
  $$
  \mathcal M_n(t)
  \leq -\frac{1}{t} \left[ C_1 R_n^2 - C_2 R_n \right] F\chi_t^2F + \frac{C_3}{t^2}.
  $$
  Since the expression in square brackets is to leading order $R_n^2$ for large $n$, there exist $N\in\N$ and $c>0$ such that
  $$
  \mathcal M_n(t) \leq -\frac{cR_n^2}{t} F\chi_t^2F + \frac{C_3}{t^2}, \quad n\geq N.
  $$
  Together with \eqref{eq:principal-g-insertion}, this proves the asserted inequality. The estimate on $\ce_{3,n}(t)$ is proved in Lemma~\ref{boundr3}.
\end{proof}

\begin{lemma}
  \label{boundr12}
  In the situation of Lemma~\ref{the7aux2}, assume $\alpha>2-d/2$. Then, uniformly in $n$ and $t\geq1$,
  \begin{align}
    \label{eq:boundr12-basic}
    \sum_{j=1}^2 \left| \left\langle \me{-itH_\kappa}\phi, \ce_{j,n}(t)\me{-itH_\kappa}\phi \right\rangle \right| \lesssim \frac{1}{R_n^2 t^{\alpha}} \|\langle x\rangle^2\phi\|^2.
  \end{align}
  If moreover $\alpha>3-d/2$, then, uniformly in $n$ and $t\geq1$,
  \begin{align}
    \label{eq:boundr12-improved}
    \sum_{j=1}^2 \left| \left\langle \me{-itH_\kappa}\phi, \ce_{j,n}(t)\me{-itH_\kappa}\phi \right\rangle \right| \lesssim \frac{1}{R_n^2} \|\langle x\rangle^2\phi\|^2
    \begin{cases}
      t^{-\min\{2,\alpha+\rho\}}, & V\neq0,\\
      t^{-2}, & V=0.
    \end{cases}
  \end{align}
\end{lemma}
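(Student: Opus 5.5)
Both remainders will be bounded by the same scheme: compute the double commutator explicitly in Fourier space, bound it in operator norm as a sandwich between position weights, and then move those weights onto $\phi$ via Lemma~\ref{lem9}. Recall $[|D|^\alpha,x_j]=-i\alpha|D|^{\alpha-2}D_j=-i\,\partial_{\xi_j}|\xi|^\alpha(D)$. Iterating once more gives
\begin{align*}
  \ad_{B_t}^2(|D|^\alpha)=-\frac{1}{t^4}\sum_{j,k}\left(x_j x_k\,\partial_{\xi_j}\partial_{\xi_k}|\xi|^\alpha(D)+\text{symmetrizations}\right)+\text{lower order},
\end{align*}
where each commutator with $|x|^2$ costs one factor $x$ and one $\xi$-derivative. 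The worst symbol is $\partial_{\xi_j}\partial_{\xi_k}|\xi|^\alpha\sim|\xi|^{\alpha-2}$, which is singular at $\xi=0$ when $\alpha<2$. To control it I will use the Fourier-space Hardy inequality $\||D|^{\alpha-2}\langle x\rangle^{\alpha-2}\|<\infty$, which holds for $2-\alpha<d/2$, i.e.\ $\alpha>2-d/2$. This leaves a factor $\langle x\rangle^{2-\alpha}$ beside $x_jx_k$.

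For $\ce_{1,n}$ I will insert this into the Helffer--Sj\"ostrand integral. The resolvents $(z-B_t)^{-1}$ are multiplication operators. On the relevant region $|x|\lesssim R_nt$ each factor $x/t$ is comparable to $R_n$, and the resolvent powers are controlled by \eqref{eq:Yn-AA}. Collecting everything, the integrand is bounded by $t^{-4}\,\|\langle x\rangle^{2}F\me{-itH_\kappa}\phi\|\cdot t^{2-\alpha}\|\ldots\|$ times $\int|\bar\partial\widetilde Y_{n,N}||\im z|^{-m}\langle z\rangle^k$. The $x$-weights are then moved onto $\phi$ using Lemma~\ref{lem9} with $\mu\le2$, which gives $\|\langle x\rangle^2F\me{-itH}\langle x\rangle^{-2}\|\lesssim t^2$. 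The goal is to make the powers of $t$ net out to $t^{-\alpha}$, and the $R_n$ powers from \eqref{eq:Yn-AA} with suitable $m,k$ net out to $R_n^{-2}$.

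The term $\ce_{2,n}$ is handled in the same way, since $[\chi_t,[|D|^\alpha,B_t]]$ is again a second-order commutator. Here I expand $\chi_t$ through Helffer--Sj\"ostrand via $\widetilde\chi_{n,N}$ and use \eqref{eq:dyadic-chi-AA-integral}; alternatively, I can use \eqref{eq:dyadic-spatial-derivatives} directly to gain $(R_nt)^{-1}$ per derivative.

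For the improved bound \eqref{eq:boundr12-improved} the plan is to expand one step further, writing the commutator as a leading part plus a third-order remainder. The symbol of the remainder is $\sim|\xi|^{\alpha-3}$, so the Hardy inequality now requires $\alpha>3-d/2$ and gains an additional factor $t^{-1}$ relative to the leading part. I then treat the leading part using the energy localization $F=FG$. I write $|D|^\alpha=H_\kappa+\kappa|x|^{-\alpha}+V$ inside $G$. The $H_\kappa$ contribution gives bounded operators, while the Hardy part is homogeneous and so yields an extra $t^{-\alpha}$ on the region $|x|\sim R_nt$. The potential part gives $\langle x\rangle^{-\rho}\lesssim t^{-\rho}$ there, which produces the rate $t^{-\min\{2,\alpha+\rho\}}$; this contribution is absent when $V=0$. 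I expect this step to be the main obstacle. The localization is only approximate: $\chi_t$ does not commute with $G$, and the symbol tails of $[|D|^\alpha,\cdot]$ are nonlocal, so they must be controlled by the $x$-decay of the Fourier kernel together with the dyadic bounds, uniformly in $n$.
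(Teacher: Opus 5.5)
\paragraph{Basic bound.} Your treatment of \eqref{eq:boundr12-basic} is essentially the paper's. It uses Hardy's inequality for $M_{jk}\langle x\rangle^{\alpha-2}$, where $M_{jk}=\partial_{\xi_j}\partial_{\xi_k}|\xi|^\alpha(D)$, then Lemma~\ref{lem9} with $\mu\le2$, then \eqref{eq:Yn-AA} with $k=0$, $m=3$ (resp.\ \eqref{eq:dyadic-chi-AA-integral} with $k=0$, $m=2$ for $\ce_{2,n}$). Two points need correcting.
\begin{itemize}
\item There is no localization to $|x|\lesssim R_nt$ in $\ce_{1,n}(t)$: neither $v_t=F\me{-itH_\kappa}\phi$ nor $(z-B_t)^{-1}$ is localized. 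The $x$-weights are paid for only through Lemma~\ref{lem9}, and $R_n^{-2}$ comes solely from the almost-analytic integral.
\item The identity $\ad_{B_t}^2(|D|^\alpha)=-t^{-4}\sum_{j,k}(M_{jk}x_jx_k+2x_jM_{jk}x_k+x_jx_kM_{jk})$ is exact, with no lower-order terms. Under $\alpha>2-d/2$ alone you must not commute any $x$ past $M_{jk}$, since this creates order-$(\alpha-3)$ multipliers that Hardy cannot control. The middle ordering is bounded by putting $x_j$ on one side and $\langle x\rangle^{2-\alpha}x_k$ on the other, giving $t^{-4}\cdot t\cdot t^{3-\alpha}=t^{-\alpha}$ (using $3-\alpha\le2$).
\end{itemize}

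\paragraph{The gap in the improved bound.} The genuine gap is in \eqref{eq:boundr12-improved}. Hardy's inequality is dilation invariant, so it cannot make the remainder $N_{jk\ell}(D)=[M_{jk},x_\ell]$ gain $t^{-1}$. Trading $N_{jk\ell}(D)$ for $\langle x\rangle^{3-\alpha}$ and applying Lemma~\ref{lem9} gives $t^{-4}\|\langle x\rangle^{3-\alpha}v_t\|\,\|x_kv_t\|\sim t^{-4}\,t^{3-\alpha}\,t=t^{-\alpha}$. That is the same rate as the leading term, and it is not integrable for $\alpha=1$. Likewise, the decay of $\kappa|x|^{-\alpha}$ and $V$ cannot be evaluated ``on $|x|\sim R_nt$''.

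\paragraph{The missing idea.} Split $v_t=Gv_t+Kv_t$ with the \emph{free} cutoff $G=g(|D|^\alpha)$ and $K=g(H_\kappa)-g(|D|^\alpha)$. Apply this both to $x_jM_{jk}x_k$ and to the $N_{jk\ell}$-terms.
\begin{itemize}
\item The symbol $g(|\xi|^\alpha)$ vanishes near $\xi=0$, which $g(H_\kappa)$ does not guarantee. Hence $M_{jk}G$ and $N_{jk\ell}(D)G$ are bounded without Hardy weights, giving $t^{-2}$ and $t^{-3}$.
\item The noncommutativity you flag as the main obstacle is harmless. $G$ has a Schwartz kernel and $|\nabla(x_k(z-B_t)^{-1})|\lesssim|\im z|^{-1}+\langle z\rangle|\im z|^{-2}$ uniformly in $t$. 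Schur's test then bounds $[x_k(z-B_t)^{-1},G]$ independently of $t$, so the commutator pieces are $O(t^{-1-\alpha})$. For the $N$-terms, this is where $\alpha>3-d/2$ enters, via $N_{jk\ell}(D)\langle x\rangle^{\alpha-3}\in\B(L^2)$. The extra $\langle z\rangle$ is absorbed by \eqref{eq:Yn-AA} with $k=1$, $m=4$, which still gives $R_n^{-2}$.
\item The $K$-pieces need the operator bound of Lemma~\ref{lem:xK}, $\|x_kKv_t\|\lesssim\|\phi\|+\one_{\{V\neq0\}}t^{1-\rho}\|\langle x\rangle^{1-\rho}\phi\|$. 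It follows from the resolvent formula for $K$, from Hardy's inequality in the forms $|D|^{1-\alpha}x_k|x|^{-\alpha}\in\B(L^2)$ and $|D|^{-\alpha/2}|x|^{-\alpha/2}\in\B(L^2)$, and from Lemmas~\ref{lem8} and~\ref{lem9}. It yields $t^{-1-\alpha}+t^{-\alpha-\rho}$.
\end{itemize}
So the Hardy potential gains $t^{-1}$, not $t^{-\alpha}$. The gain comes from $\alpha\ge1$ letting $|x|^{-\alpha}$ absorb one factor $x$, not from any spatial localization.
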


\begin{proof}
  Recall $F:=f(H_\kappa)$ and $B_t=|x|^2/t^2$. We abbreviate $H = H_\kappa$ and $P = |D|^\alpha$,
  $$
  R_t(z) := (z-B_t)^{-1},
  \qquad
  \phi_t := \me{-itH}\phi,
  \qquad
  v_t := F\phi_t.
  $$
  Then, with this notation,
  \begin{align}
    \label{eq:boundr12-R1-dyadic}
    \ce_{1,n}(t)
    & = \frac1{2\pi} F \left(\int_{\C} \bar\partial\widetilde Y_{n,N}(z) R_t(z)^2 \ad_{B_t}^2(P) R_t(z) \,dz\wedge d\bar z\right) F
  \end{align}
  and
  \begin{align}
    \label{eq:boundr12-R2-dyadic}
    \ce_{2,n}(t) = i F\chi_n(B_t) [\chi_n(B_t),[P,B_t]] F.
  \end{align}
  
  \medskip
  \noindent
  {\bf Step 1: Expression for $\ad_{B_t}^2(P)$ and proof of \eqref{eq:boundr12-basic}.}
  Let
  $$
  M_{jk} := \partial_{\xi_j}\partial_{\xi_k}|\xi|^\alpha(D)
  = \alpha\delta_{jk}|D|^{\alpha-2} + \alpha(\alpha-2) |D|^{\alpha-4}D_jD_k.
  $$
  By iterating \eqref{eq:explicit-kinetic-commutator},
  \begin{align}
    \label{eq:exact-second-commutator-boundr12-dyadic}
    \ad_{B_t}^2(P)  
    = t^{-4}\ad_{|x|^2}^2(P)
    = -t^{-4}\sum_{j,k=1}^d \left(M_{jk}x_jx_k + 2x_jM_{jk}x_k + x_jx_kM_{jk}\right).
  \end{align}
  Since $\alpha>2-d/2$, we note that, by Hardy's inequality,
  \begin{align}
    \label{eq:Mjk-Hardy-dyadic}
    M_{jk}\langle x\rangle^{\alpha-2} \in\mathcal B(L^2).
  \end{align}
  We now commute the $x_j$ and $x_k$ next to $v_t$ to apply the propagation estimate in Lemma~\ref{lem9}.
  Consider first the middle ordering in \eqref{eq:exact-second-commutator-boundr12-dyadic}. Since $R_t(z)$ commutes with the multiplication operators $x_j$, we get, by \eqref{eq:Mjk-Hardy-dyadic},
  \begin{align}
    \label{eq:boundr12-basic-middle-dyadic}
    \frac1{t^4} \left| \left\langle v_t, R_t(z)^2x_jM_{jk}x_kR_t(z)v_t\right\rangle \right|
    \lesssim \frac{|\im z|^{-3}}{t^4} \|\langle x\rangle^{3-\alpha}v_t\| \|\langle x\rangle v_t\|.
  \end{align}
  By Lemma~\ref{lem9},
  $$
  \|\langle x\rangle^\mu v_t\| \lesssim_f t^\mu\|\langle x\rangle^\mu\phi\|, \quad 0\leq\mu\leq2,
  $$
  and $3-\alpha\leq2$, \eqref{eq:boundr12-basic-middle-dyadic} yields
  \begin{align}
    \label{eq:boundr12-basic-middle-final-dyadic}
    \frac1{t^4} \left| \left\langle v_t, R_t(z)^2x_jM_{jk}x_kR_t(z)v_t \right\rangle \right|
    \lesssim t^{-\alpha}|\im z|^{-3} \|\langle x\rangle^2\phi\|^2.
  \end{align}
  The two outer orderings involving $M_{jk}x_jx_k$ and $x_jx_kM_{jk}$ can be treated without commuting $M_{jk}$ through an $x$-factor. For example,
  \begin{align}
    \label{eq:boundr12-basic-outer-dyadic}
    \begin{split}
      \frac1{t^4} \left| \left\langle v_t, R_t(z)^2M_{jk}x_jx_kR_t(z)v_t \right\rangle\right|
      & \lesssim \frac{|\im z|^{-3}}{t^4} \|\langle x\rangle^2v_t\| \|\langle x\rangle^{2-\alpha}v_t\| \\
      & \lesssim t^{-\alpha} |\im z|^{-3} \|\langle x\rangle^2\phi\|^2.
    \end{split}
  \end{align}
  The term with $x_jx_kM_{jk}$ is estimated in the same way.
  Thus, by \eqref{eq:Yn-AA} with $k=0$ and $m=3$,
  \begin{align}
    \label{eq:boundr12-R1-basic-dyadic}
    |\langle\phi_t,\ce_{1,n}(t)\phi_t\rangle| \lesssim R_n^{-2}t^{-\alpha} \|\langle x\rangle^2\phi\|^2.
  \end{align}
  To estimate $\ce_{2,n}(t)$, set
  $$
  C_t := [P,B_t],
  \qquad
  D_t := [C_t,B_t] = \ad_{B_t}^2(P).
  $$
  Since $\chi_n\in C_c^\infty(\R)$, the Helffer--Sj\"ostrand formula yields
  \begin{align}
    \label{eq:R2-HS-boundr12-dyadic}
    [\chi_n(B_t),C_t] = -\frac1{2\pi i} \int_{\C} \bar\partial\widetilde\chi_{n,N}(z) R_t(z)D_tR_t(z) \,dz\wedge d\bar z.
  \end{align}
  Thus, the same three orderings as in $\mathcal E_{1,n}(t)$ occur, but with two $B_t$-resolvents instead of three. Repeating \eqref{eq:boundr12-basic-middle-dyadic}--\eqref{eq:boundr12-basic-outer-dyadic}, now with $|\im z|^{-2}$ instead of $|\im z|^{-3}$, and using~\eqref{eq:dyadic-chi-AA-integral} with $k=0$ and $m=2$, gives
  \begin{align}
    \label{eq:boundr12-R2-basic-dyadic}
    |\langle\phi_t,\ce_{2,n}(t)\phi_t\rangle| \lesssim R_n^{-2}t^{-\alpha} \|\langle x\rangle^2\phi\|^2.
  \end{align}
  This proves \eqref{eq:boundr12-basic}.

  \medskip
  \noindent
  {\bf Step 2: Preparations for the improved estimate \eqref{eq:boundr12-improved}.}
  To generate more decay in time when $\alpha\leq1$, we avoid commuting the $x_j$ and $x_k$ away from $M_{jk}$ as these yielded the terms $\|\langle x\rangle^\mu v_t\|$ yielding too high powers of $t$. Thus, to control ${\rm ad}_{B_t}^2(P)$, we use the energy localization more efficiently. To this end, we assume $\alpha>3-d/2$. In the following, set $\rho_0:=\min\{\rho,1\}$. Since $|V(x)| \lesssim \langle x\rangle^{-\rho_0}$ and $\min\{2,\alpha+\rho_0\} = \min\{2,\alpha+\rho\}$, we may, for the remainder of the proof, replace $\rho$ by $\rho_0$ and thus assume $0<\rho\leq1$.

  To exploit the energy localization, choose $g\in C_c^\infty((0,\infty);\R)$ such that $g=1$ on a neighborhood of $\supp f$. Set
  $$
  G := g(P),
  \qquad
  K:=g(H)-g(P).
  $$
  Since $g(H)F=F$,
  \begin{align}
    \label{eq:F-GK-decomposition-boundr12}
    v_t = Gv_t + Kv_t.
  \end{align}
  By the Helffer--Sj\"ostrand and resolvent formulae,
  \begin{align}
    \label{eq:K-definition-boundr12}
    K = -\frac1{2\pi i} \int_{\C} \bar\partial\widetilde g(\zeta) (\zeta-P)^{-1} \left( \kappa|x|^{-\alpha}+V \right) (\zeta-H)^{-1} \,d\zeta\wedge d\bar\zeta.
  \end{align}
  In Lemma~\ref{lem:xK} below, we prove
  \begin{align}
    \label{eq:xK-boundr12}
    \|x_k K v_t\| \lesssim \|\phi\| + \one_{\{V\neq0\}} t^{1-\rho} \|\langle x\rangle^{1-\rho}\phi\|.
  \end{align}
  Recalling $R_t(z)=(z-|x|^2/t^2)^{-1}$, we will also use the commutator estimate
  \begin{align}
    \label{eq:m-gP-commutator-boundr12}
    \|[x_k R_t(z),G]\| \lesssim_g |\im z|^{-1} + \langle z\rangle|\im z|^{-2}.
  \end{align}
  Indeed, $G=g(P)$ has a Schwartz convolution kernel because $g$ is supported away from the origin, while
  $$
  \left\|\nabla \left(x_k (z-|x|^2/t^2)^{-1}\right)\right\|_\infty
  \lesssim |\im z|^{-1} + \langle z\rangle|\im z|^{-2}.
  $$
  Thus, \eqref{eq:m-gP-commutator-boundr12} follows from Schur's test.

  \medskip
  \noindent
  {\bf Step 3: The $x_jM_{jk}x_k$ term.}
  We first consider again the middle ordering $x_jM_{jk}x_k$. Using \eqref{eq:F-GK-decomposition-boundr12},
  $$
  x_kR_t(z)v_t = Gx_kR_t(z)v_t + [x_kR_t(z),G]v_t + x_kR_t(z)Kv_t.
  $$
  Since $g$ vanishes in a neighborhood of the origin, $M_{jk}G\in\mathcal B(L^2)$. Hence, by Lemma~\ref{lem9}, the $Gx_kR_t(z)v_t$-part satisfies
  \begin{align}
    \label{eq:boundr12-free-main-dyadic}
    \frac1{t^4} \left| \left\langle v_t, R_t(z)^2x_jM_{jk}G x_kR_t(z)v_t \right\rangle \right|
    \lesssim t^{-2}|\im z|^{-3} \|\langle x\rangle\phi\|^2.
  \end{align}
  For the $[x_kR_t(z),G]v_t$-part, we move $M_{jk}$ to the opposite side and use~\eqref{eq:Mjk-Hardy-dyadic} and~\eqref{eq:m-gP-commutator-boundr12}. Thus, by $3-\alpha\leq2$ and Lemma~\ref{lem9},
  \begin{align}
    \label{eq:boundr12-free-comm-dyadic}
    \frac1{t^4} \left| \left\langle v_t, R_t(z)^2x_jM_{jk} [x_kR_t(z),G]v_t\right\rangle\right| 
    \lesssim t^{-1-\alpha} \left(|\im z|^{-3} + \langle z\rangle|\im z|^{-4}\right) \|\langle x\rangle^2\phi\|^2.
  \end{align}
  Finally, \eqref{eq:xK-boundr12} gives for the $x_kR_t(z)Kv_t$-part
  \begin{align}
    \label{eq:boundr12-K-middle-dyadic}
    \frac1{t^4} \left| \left\langle v_t, R_t(z)^2x_jM_{jk} x_kR_t(z)Kv_t \right\rangle \right|
      \lesssim |\im z|^{-3} \|\langle x\rangle^2\phi\|^2 \left(t^{-1-\alpha} + \one_{\{V\neq0\}}t^{-\alpha-\rho} \right).
  \end{align}
  Since $\alpha\geq1$, \eqref{eq:boundr12-free-main-dyadic}--\eqref{eq:boundr12-K-middle-dyadic} yield
  \begin{align}
    \label{eq:boundr12-middle-improved-dyadic}
    \begin{split}
      & \frac{1}{t^4}\left|\left\langle v_t,R_t(z)^2x_jM_{jk}x_kR_t(z)v_t\right\rangle\right| \\
      & \quad \lesssim \left(|\im z|^{-3} + \langle z\rangle|\im z|^{-4}\right)\|\langle x\rangle^2\phi\|^2
        \begin{cases}
          t^{-\min\{2,\alpha+\rho\}}, & V\neq0,\\
          t^{-2}, & V=0.
        \end{cases}
    \end{split}
  \end{align}
  
  \medskip
  \noindent
  {\bf Step 4: The $x_jx_kM_{jk}$ and $M_{jk}x_jx_k$ terms.}
  It remains to treat the terms involving $x_jx_kM_{jk}+M_{jk}x_jx_k$ in \eqref{eq:exact-second-commutator-boundr12-dyadic}. Commuting one $x$-factor past $M_{jk}$ reduces them to the middle ordering $x_jM_{jk}x_k$ considered in the previous step, together with terms involving
  $$
  N_{jk\ell}(D) := [M_{jk},x_\ell],
  $$
  which is a homogeneous Fourier multiplier of order $\alpha-3$ with smooth bounded angular part. Here we use the stronger hypothesis $\alpha>3-d/2$. By Hardy's inequality,
  \begin{align}
    \label{eq:N-Hardy-boundr12-dyadic}
    N_{jk\ell}(D)\langle x\rangle^{\alpha-3} \in\mathcal B(L^2).
  \end{align}
  Consider, e.g.,
  $$
  \frac{1}{t^4} R_t(z)^2N_{jk\ell}(D)x_kR_t(z)v_t
  = \frac{1}{t^4} R_t(z)^2N_{jk\ell}(D)x_kR_t(z)\left(Gv_t + Kv_t\right),
  $$
  where we used the decomposition of $v_t$ in \eqref{eq:F-GK-decomposition-boundr12}.
  The term in which $G$ is moved next to $N_{jk\ell}(D)$ is bounded by a constant times
  $$
  t^{-3}|\im z|^{-3} \|\langle x\rangle\phi\|\,\|\phi\|,
  $$
  by using Lemma~\ref{lem9} and the boundedness of $N_{jk\ell}(D)G$. The term containing the commutator $[x_kR_t(z),G]$ is, by \eqref{eq:N-Hardy-boundr12-dyadic}, \eqref{eq:m-gP-commutator-boundr12}, and $3-\alpha\leq2$, bounded by a constant times
  \begin{align*}
    t^{-1-\alpha} \left(|\im z|^{-3} + \langle z\rangle|\im z|^{-4}\right) \|\langle x\rangle^2\phi\|^2.
  \end{align*}
  Finally, by \eqref{eq:xK-boundr12},
  \begin{align}
    \label{eq:order-alpha-3-K-dyadic}
      \frac1{t^4} \left| \left\langle v_t, R_t(z)^2N_{jk\ell}(D)x_kR_t(z)Kv_t \right\rangle \right|
      \lesssim |\im z|^{-3} \|\langle x\rangle^2\phi\|^2 \left(t^{-1-\alpha} + \one_{\{V\neq0\}}t^{-\alpha-\rho} \right).
  \end{align}
  The terms with the remaining ordering are treated similarly.

  \medskip
  \noindent
  {\bf Step 5: Conclusion.}
  Plugging the bounds of Steps~3 and~4 in \eqref{eq:exact-second-commutator-boundr12-dyadic} shows
  \begin{align}
    \label{eq:all-orderings-improved-boundr12-dyadic}
    \begin{split}
      & \left| \left\langle v_t, R_t(z)^2 {\rm ad}_{B_t}^2(|D|^\alpha) R_t(z)v_t \right\rangle \right| \\
      & \quad \lesssim \left(|\im z|^{-3} + \langle z\rangle|\im z|^{-4}\right) \|\langle x\rangle^2\phi\|^2
        \begin{cases}
          t^{-\min\{2,\alpha+\rho\}}, & V\neq0,\\
          t^{-2}, & V=0.
        \end{cases}
    \end{split}
  \end{align}
  Integrating in $z$ and using \eqref{eq:Yn-AA} with $k\in\{0,1\}$ and $m\in\{3,4\}$, we obtain
  \begin{align}
    \label{eq:R1-final-improved-dyadic}
    |\langle\phi_t,\ce_{1,n}(t)\phi_t\rangle|
    \lesssim R_n^{-2} \|\langle x\rangle^2\phi\|^2
    \begin{cases}
      t^{-\min\{2,\alpha+\rho\}}, & V\neq0,\\
      t^{-2}, & V=0.
    \end{cases}
  \end{align}
  
  For $\ce_{2,n}(t)$ we use \eqref{eq:R2-HS-boundr12-dyadic}. The same second commutator $D_t=[[P,B_t],B_t]$ as in the previous steps occurs, but, compared to $\ce_{1,n}$, there is one fewer $B_t$-resolvent. Thus, the estimates from Steps~3 and~4 hold with $|\im z|^{-3} + \langle z\rangle|\im z|^{-4}$ replaced by $|\im z|^{-2} + \langle z\rangle|\im z|^{-3}$. Applying \eqref{eq:dyadic-chi-AA-integral} with $k\in\{0,1\}$ and $m\in\{2,3\}$ also yields
  \begin{align}
    \label{eq:R2-final-improved-dyadic}
    |\langle\phi_t,\ce_{2,n}(t)\phi_t\rangle|
    \lesssim R_n^{-2} \|\langle x\rangle^2\phi\|^2
    \begin{cases}
      t^{-\min\{2,\alpha+\rho\}}, & V\neq0, \\
      t^{-2}, & V=0.
    \end{cases}
  \end{align}
  Combining \eqref{eq:R1-final-improved-dyadic} and \eqref{eq:R2-final-improved-dyadic} proves \eqref{eq:boundr12-improved}.
\end{proof}

\begin{lemma}
  \label{boundr3}
  In the situation of Lemma~\ref{the7aux3}, one has
  \begin{align}
    \label{eq:boundr3-weaker-alpha}
    \left|
      \left\langle \me{-itH_\kappa}\phi,
      \ce_{3,n}(t)\me{-itH_\kappa}\phi\right\rangle
    \right|
    \lesssim t^{-2}\|\phi\|^2,
    \qquad \phi\in L^2(\R^d)
  \end{align}
  uniformly in $n\geq n_0$ and $t\geq1$.
\end{lemma}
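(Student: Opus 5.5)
We need to bound $\ce_{3,n}(t)=\frac{\alpha}{t^2}F[G,\chi_t]Q\chi_tF+\mathrm{h.c.}$ with $G=g(H_\kappa)$, $Q=\sum_j x_jM_j$, and $M_j=|D|^{\alpha-2}D_j$. Since the prefactor is already $t^{-2}$, it suffices to show that the operator $F[G,\chi_t]Q\chi_tF$ is bounded on $L^2$, uniformly in $n\geq n_0$ and $t\geq1$. The estimate for $\me{itH_\kappa}\ce_{3,n}(t)\me{-itH_\kappa}$ then follows immediately from unitarity. The difficulty is the unbounded factor $x_j$ inside $Q$. We will pay for it with the decay of $[G,\chi_t]$, which is of order $(R_nt)^{-1}$, and with the localization $|x|\lesssim R_nt$ on $\supp\chi_t$.

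\textbf{Step 1: factor out the position.} Write $Q\chi_t=\sum_j M_jx_j\chi_t+\sum_j[x_j,M_j]\chi_t$. The commutator $[x_j,M_j]$ is a homogeneous multiplier of order $\alpha-2\le0$. It is $x$-independent but singular at the origin when $\alpha<2$. We handle it by writing $F=GF$ and placing $G$ on the side of the multiplier. One cannot commute $G$ across $\chi_t$ for free, so instead we use $\chi_tF=\chi_tGF$ and the term-by-term strategy below. For the main part, $x_j\chi_t=(R_nt)\,\tilde\chi_{t,j}$, where $\tilde\chi_{t,j}(x)=\frac{x_j}{R_nt}\chi_n(|x|^2/t^2)$ is a bounded smooth function. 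Its derivatives are $\lesssim(R_nt)^{-|\beta|}$ by \eqref{eq:dyadic-spatial-derivatives}. It therefore suffices to show
$$
\|[G,\chi_t]M_j\tilde\chi_{t,j}F\|\lesssim (R_nt)^{-1},
$$
together with the boundedness of the lower-order piece.

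\textbf{Step 2: the commutator $[G,\chi_t]$.} Apply the Helffer--Sj\"ostrand formula \eqref{eq:hs} to $G$:
$$
[G,\chi_t]=\frac1{2\pi i}\int\bar\partial\tilde g(z)\,(z-H_\kappa)^{-1}[H_\kappa,\chi_t](z-H_\kappa)^{-1}\,dz\wedge d\bar z .
$$
Since $V$ and $|x|^{-\alpha}$ commute with $\chi_t$, we have $[H_\kappa,\chi_t]=[|D|^\alpha,\chi_t]$. By \eqref{eq:first-moment-Fourier-bound} with $a=\chi_t$, rescaled as in Step~1 of Lemma~\ref{lem:refined-first-moment}, this gives
$$
\|[|D|^\alpha,\chi_t]u\|\lesssim (R_nt)^{-1}\|u\|_{H^{\alpha-1}}+(R_nt)^{-\alpha}\|u\|.
$$
Combined with $\|(z-H_\kappa)^{-1}\|_{L^2\to H^{\alpha-1}}\lesssim\langle z\rangle|\im z|^{-1}$, which holds because $\alpha-1\le\alpha/2$, this yields $\|[G,\chi_t]\|\lesssim (R_nt)^{-1}$. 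We need slightly more, namely that the right resolvent absorbs $M_j$. So we write $(z-H_\kappa)^{-1}M_j\tilde\chi_{t,j}F$ and commute $\tilde\chi_{t,j}$ to the left of $M_j$.

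\textbf{Step 3: absorbing $M_j$ and the cutoffs.} We have $(z-H_\kappa)^{-1}M_j\in\B(L^2)$, with norm $\lesssim\langle z\rangle^{c}|\im z|^{-1}$. This follows exactly as for $c_1$ in the proof of Lemma~\ref{the7aux3}: $\langle D\rangle^{-\alpha/2}M_j$ is bounded for $\alpha\ge1$, and $(z-H_\kappa)^{-1}\langle D\rangle^{\alpha/2}$ is bounded. The same argument gives $M_j(z-H_\kappa)^{-1}$ bounded, which makes the ordering harmless.

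Inserting this, we get
$$
\|[G,\chi_t]M_j\tilde\chi_{t,j}F\|\lesssim\int|\bar\partial\tilde g|\,|\im z|^{-1}\,\|[|D|^\alpha,\chi_t](z-H_\kappa)^{-1}M_j\|\,d^2z .
$$
Here $[|D|^\alpha,\chi_t]$ acts on $(z-H_\kappa)^{-1}M_jw$. The $H^{\alpha-1}$ norm of this vector is controlled by $\|\langle D\rangle^{\alpha-1}(z-H_\kappa)^{-1}M_j\|$. That quantity is bounded because $(\alpha-1)+(\alpha-1)\le\alpha$, i.e. $\alpha\le2$, via $(z-H_\kappa)^{-1}:H^{-\alpha/2}\to H^{\alpha/2}$. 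Choosing $N$ large makes the $z$-integral finite. The result is a bound $(R_nt)^{-1}+(R_nt)^{-\alpha}\lesssim(R_nt)^{-1}$, which cancels the factor $R_nt$ from Step~1.

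The lower-order term $[x_j,M_j]\chi_tF$ is treated the same way. Its symbol is bounded at high frequency, and near zero frequency it is of order $|\xi|^{\alpha-2}$. We write it as $[G,\chi_t](z\text{-resolvent})[x_j,M_j]$. We then use $\langle D\rangle^{-\alpha/2}|D|^{\alpha-2}$, which is bounded only if $\alpha\ge2$, so that route fails at low frequency. We therefore instead commute: $[G,\chi_t][x_j,M_j]\chi_tF=[G,\chi_t][x_j,M_j]\chi_tG F$, and use Hardy's inequality $|D|^{\alpha-2}\langle x\rangle^{\alpha-2}\in\B$ (valid since $\alpha>2-d/2$). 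The resulting weight $\langle x\rangle^{2-\alpha}\chi_t\lesssim (R_nt)^{2-\alpha}$ is more than compensated by $\|[G,\chi_t]\|\lesssim (R_nt)^{-1}$ and $t^{-2}$, at least for $\alpha\ge1$.

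\textbf{Main obstacle.} The delicate point is this low-frequency singularity of the order-$(\alpha-2)$ multipliers, together with making sure the $x_j$-growth is exactly cancelled by the single power of $(R_nt)^{-1}$ from $[|D|^\alpha,\chi_t]$. If the low-frequency piece resists the Hardy argument, the fallback is to exploit that $G=g(H_\kappa)$ with $g$ supported away from $0$. Concretely, we would replace $G$ by $g(P)$ up to the difference $K=g(H_\kappa)-g(P)$, as in Step~2 of the proof of Lemma~\ref{boundr12}. Then $g(P)$ kills low frequencies, and $K$ carries an additional decaying factor $\kappa|x|^{-\alpha}+V$.
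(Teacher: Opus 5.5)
Your argument is correct under the hypotheses of Theorem~\ref{the7}, but it routes the weight $x_j$ differently from the paper.

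\textbf{Your route.} You commute $x_j$ to the right of $M_j$ and absorb it into the cutoff via $|x_j\chi_t|\le 4R_nt$. Two facts then close the estimate.
\begin{itemize}
\item[(i)] $\|[G,\chi_t]M_j\tilde\chi_{t,j}F\|\lesssim (R_nt)^{-1}$. Your Helffer--Sj\"ostrand and Sobolev-counting argument gives this, via $(z-H_\kappa)^{-1}M_j\colon L^2\to H^{\alpha-1}$ and $\|[P,\chi_t]\|_{H^{\alpha-1}\to L^2}\lesssim (R_nt)^{-1}$.
\item[(ii)] Control of the order-$(\alpha-2)$ multiplier $[x_j,M_j]$, which is singular at $\xi=0$.
\end{itemize}

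\textbf{The paper's route.} The paper keeps $x_j$ to the left of $M_j$ and proves $\sup_{L\geq1}\|x_j[q_L,G]F\|<\infty$. It moves $x_j$ through the resolvents, which produces only $[x_j,H_\kappa]=i\alpha M_j$. That multiplier has order $\alpha-1$ and is bounded at low frequency since $\alpha\geq1$. It also uses $x_j[q_L,P]=[x_jq_L,P]-[x_j,P]q_L$. The remaining factor $M_jq_LF$ is handled by $M_jF\in\B(L^2)$ and $\|[M_j,q_L]\|\lesssim L^{1-\alpha}$.

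\textbf{What each ordering buys.} With the paper's ordering, no low-frequency-singular multiplier ever appears. So no Hardy inequality is needed, and the bound holds for all $\alpha\in[1,2]\cap(0,d)$, as Lemma~\ref{the7aux3} is stated. Your step $|D|^{\alpha-2}\langle x\rangle^{\alpha-2}\in\B(L^2)$ requires $\alpha>2-d/2$. The Hardy bound you invoke is false at $(d,\alpha)=(2,1)$, although that case is excluded in Theorem~\ref{the7}. In exchange, your route makes the mechanism more transparent: growth $R_nt$ from the support is cancelled by decay $(R_nt)^{-1}$ from the commutator.

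\textbf{Lower-order term.} Your treatment here is more convoluted than necessary. The detour through $\langle D\rangle^{-\alpha/2}|D|^{\alpha-2}$ and the insertion of $G$ play no role, and the $g(P)$ fallback is not needed. What closes it is $\|[G,\chi_t]\|\lesssim (R_nt)^{-1}$ times $\|[x_j,M_j]\langle x\rangle^{\alpha-2}\|\,\|\langle x\rangle^{2-\alpha}\chi_t\|_\infty\lesssim (R_nt)^{2-\alpha}$. This gives a contribution $\lesssim t^{-2}(R_nt)^{1-\alpha}\leq t^{-2}$.
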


\begin{proof}
  We abbreviate
  $$
  H:=H_\kappa,
  \qquad
  P:=|D|^\alpha,
  \qquad
  q_L(x):=q(|x|/L),
  \qquad L:=R_nt\geq1.
  $$
  Thus, $q_L=\chi_n(B_t)$. Recall $F=f(H)$, $G=g(H)$. Then, in this notation,
  $$
  \ce_{3,n}(t) = \frac{\alpha}{t^2} \sum_{j=1}^d \left(F[G,q_L] x_j\right) \left(|D|^{\alpha-2}D_j q_L F\right) + \mathrm{h.c.}.
  $$
  We first consider the first factor and prove
  \begin{align}
    \label{eq:xqG-first-order}
    \sup_{L\geq1}\|x_j[q_L,G]F\|<\infty,
    \quad j=1,...,d.
  \end{align}

  \medskip
  \noindent
  {\bf Step 1: Commutator bounds for the kinetic energy.}
  To prepare the proof of \eqref{eq:xqG-first-order}, we first show that, uniformly for $L\geq1$,
  \begin{align}
    \label{eq:qL-P-first-order}
    \|[q_L,P]u\| & \lesssim L^{-1}\|u\|_{H^{\alpha-1}},\\
    \label{eq:xqL-P-first-order}
    \|x_j[q_L,P]u\| & \lesssim \|u\|_{H^{\alpha-1}}.
  \end{align}
  To verify these estimates, note that if $a$ is a Schwartz function, then Plancherel's theorem, \eqref{eq:P-commutator} and Young's inequality yield \eqref{eq:qL-P-first-order}, as before. Applying this first with $a=q_L$, for which $\widehat q_L(\zeta)=L^d\widehat q(L\zeta)$ and $\left\||\zeta|^\alpha\widehat q_L\right\|_1 \lesssim L^{-\alpha}\leq L^{-1}$, yields \eqref{eq:qL-P-first-order}.

  Next let $a_{L,j}(x):=x_jq_L(x)$. Then, similarly as before, $\left\||\zeta|^\alpha\widehat a_{L,j}\right\|_1 \lesssim L^{1-\alpha}\leq1$. Thus, by \eqref{eq:first-moment-Fourier-bound},
  \begin{align}
    \label{eq:aLj-P-first-order}
    \|[a_{L,j},P]u\|\lesssim\|u\|_{H^{\alpha-1}}.
  \end{align}
  Next,
  \begin{align}
    \label{eq:xq-commutator-identity-first-order}
    x_j[q_L,P]=[x_jq_L,P]-[x_j,P]q_L
  \end{align}
  with $[x_j,P]=i\alpha|D|^{\alpha-2}D_j$ being a Fourier multiplier of order $\alpha-1$. Moreover, multiplication by $q_L$ is uniformly bounded on $H^{\alpha-1}$ because $0\leq\alpha-1\leq1$ and
  $$
  \sup_{L\geq1} \left(\|q_L\|_\infty+\|\nabla q_L\|_\infty\right)<\infty.
  $$
  Combining this observation with \eqref{eq:aLj-P-first-order} and \eqref{eq:xq-commutator-identity-first-order} proves \eqref{eq:xqL-P-first-order}.

  \medskip
  \noindent
  {\bf Step 2: Proof of \eqref{eq:xqG-first-order}.}
  Write $R(z):=(z-H)^{-1}$. Since $\kappa<\kappa_{\rm c}$ and $V$ is bounded, we have $\mathcal D((H+c)^{1/2})=H^{\alpha/2}(\R^d)$ for $c>0$ sufficiently large. Since $\alpha-1\leq\alpha/2$, we have
  \begin{align}
    \label{eq:localized-resolvent-Halpha-1}
    \|R(z)F\|_{L^2\to H^{\alpha-1}} \lesssim_{z,f}1
  \end{align}
  with at most polynomial growth in $|\im z|^{-1}$ and $\langle z\rangle$. Moreover, with
  $$
  K_j:=[x_j,H]=i\alpha|D|^{\alpha-2}D_j,
  $$
  the same argument yields
  \begin{align}
    \label{eq:Kj-resolvent-first-order}
    \|K_jR(z)\|+\|R(z)K_j\|\lesssim_z1,
  \end{align}
  again with at most polynomial growth in $|\im z|^{-1}$ and $\langle z\rangle$.

  By the Helffer--Sj\"ostrand formula,
  \begin{align}
    \label{eq:first-order-HS-qG}
    [q_L,G]F = \frac1{2\pi i}\int_\C \bar\partial\widetilde g(z) R(z)[q_L,P]R(z)F\,dz\wedge d\bar z.
  \end{align}
  Since $x_jR(z)=R(z)x_j+R(z)K_jR(z)$, the integrand after multiplication from the left by $x_j$ equals $\bar\partial\widetilde g(z)$ times
  \begin{align}
    \label{eq:x-first-order-HS-integrand}
    x_jR(z)[q_L,P]R(z)F = R(z)x_j[q_L,P]R(z)F + R(z)K_jR(z)[q_L,P]R(z)F.
  \end{align}
  The first term on the right is uniformly bounded in $L$ by \eqref{eq:xqL-P-first-order} and \eqref{eq:localized-resolvent-Halpha-1}. The second is uniformly bounded by \eqref{eq:qL-P-first-order}, \eqref{eq:localized-resolvent-Halpha-1}, and \eqref{eq:Kj-resolvent-first-order}. All bounds have only polynomial growth in $|\im z|^{-1}$ and $\langle z\rangle$. Thus, choosing the almost-analytic extension in \eqref{eq:first-order-HS-qG} of sufficiently high order and integrating proves \eqref{eq:xqG-first-order}.

  \medskip
  \noindent
  {\bf Step 3: Estimate of $\ce_{3,n}(t)$.}
  It remains to control $|D|^{\alpha-2}D_j q_LF$. Set
  $$
  M_j:=|D|^{\alpha-2}D_j,
  \qquad \beta:=\alpha-1\in[0,1].
  $$
  Then,
  \begin{align}
    \label{eq:MjF-bounded-weaker-alpha}
    M_jF = \left(M_j\langle D\rangle^{-\alpha/2}\right) \left(\langle D\rangle^{\alpha/2}F\right) \in\mathcal B(L^2)
  \end{align}
  since both factors are bounded since $\alpha-1\leq\alpha/2$.
  We also have, uniformly for $L\geq1$,
  \begin{align}
    \label{eq:Mj-qL-commutator-weaker-alpha}
    \|[M_j,q_L]\|\lesssim L^{-\beta}.
  \end{align}
  Indeed, for $0<\beta<1$, the convolution kernel of $M_j$ is bounded by $C|x|^{-d-\beta}$, while
  $$
  |q_L(x)-q_L(y)| \lesssim \min\left\{1,\frac{|x-y|}{L}\right\}.
  $$
  Schur's test yields \eqref{eq:Mj-qL-commutator-weaker-alpha}. For $\beta=0$, $M_j$ is a Riesz transform which is $L^2$-bounded. For $\beta=1$, $M_j=D_j$ and \eqref{eq:Mj-qL-commutator-weaker-alpha} follows from $[D_j,q_L]=-i\partial_jq_L$.

  Commuting $q_L$ past $M_j$ in the definition of $\ce_{3,n}(t)$, its first non-adjoint half is the sum over $j$ of
  \begin{align*}
    \ce_{3,1}^{(j)}(t) & := \frac\alpha{t^2}F[G,q_L]x_jq_LM_jF, \\
    \ce_{3,2}^{(j)}(t) & := \frac\alpha{t^2}F[G,q_L]x_j[M_j,q_L]F.
  \end{align*}
  Let $\phi_t:=\me{-itH}\phi$. By \eqref{eq:xqG-first-order} and \eqref{eq:MjF-bounded-weaker-alpha},
  \begin{align*}
    \left| \langle\phi_t,\ce_{3,1}^{(j)}(t)\phi_t\rangle \right|
    \lesssim t^{-2}\|\phi\|^2.
  \end{align*}
  Likewise, \eqref{eq:xqG-first-order} and \eqref{eq:Mj-qL-commutator-weaker-alpha} yield
  \begin{align*}
    \left| \langle\phi_t,\ce_{3,2}^{(j)}(t)\phi_t\rangle \right|
    \lesssim t^{-2}L^{1-\alpha}\|\phi\|^2
    \lesssim t^{-2}\|\phi\|^2
  \end{align*}
  since $L=R_nt\geq1$ and $\alpha\geq1$. Summing over $j\in\{1,...,d\}$ and adding the Hermitian conjugates concludes the proof \eqref{eq:boundr3-weaker-alpha}.
\end{proof}

We now give the proof of \eqref{eq:xK-boundr12}.

\begin{lemma}
  \label{lem:xK}
  Assume the hypotheses and notation of Step~2 of the proof of Lemma~\ref{boundr12}. Then, with $\rho\in(0,1]$,
  \begin{align}
    \label{eq:xK}
     \|x_kKv_t\| \lesssim_{f,V} \|\phi\| + \one_{\{V\ne0\}}t^{1-\rho} \|\langle x\rangle^{1-\rho}\phi\|, \quad t\geq1.
  \end{align}
\end{lemma}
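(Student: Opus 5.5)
We start from the Helffer--Sj\"ostrand representation \eqref{eq:K-definition-boundr12},
$$
K = -\frac1{2\pi i}\int_\C \bar\partial\widetilde g(\zeta)(\zeta-P)^{-1}\left(\kappa|x|^{-\alpha}+V\right)(\zeta-H)^{-1}\,d\zeta\wedge d\bar\zeta,
$$
and multiply from the left by $x_k$. We commute $x_k$ through the free resolvent using
$$
x_k(\zeta-P)^{-1}=(\zeta-P)^{-1}x_k+(\zeta-P)^{-1}[x_k,P](\zeta-P)^{-1},
$$
where $[x_k,P]=i\alpha|D|^{\alpha-2}D_k$. The commutator term is bounded by a polynomial in $|\im\zeta|^{-1}$ and $\langle\zeta\rangle$ because $\alpha\geq1$. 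Combined with the boundedness of $(\kappa|x|^{-\alpha}+V)(\zeta-H)^{-1}$, which follows from Hardy's inequality and $\kappa<\kappa_{\rm c}$, it contributes $\lesssim\|v_t\|\leq\|f\|_\infty\|\phi\|$. We take an almost analytic extension of high enough order to integrate all polynomial losses. It therefore remains to bound
$$
(\zeta-P)^{-1}x_k\left(\kappa|x|^{-\alpha}+V\right)(\zeta-H)^{-1}v_t .
$$

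\emph{Hardy part.} The function $x_k|x|^{-\alpha}$ is bounded by $|x|^{1-\alpha}\leq1$ for $|x|\geq1$. Near the origin it behaves like $|x|^{1-\alpha}$, which is locally dominated by $|x|^{-\alpha/2}$ since $1-\alpha\geq-\alpha/2$ for $\alpha\leq2$. Hence $x_k|x|^{-\alpha}\langle D\rangle^{-\alpha/2}$ is bounded by Hardy's inequality. Since $\langle D\rangle^{\alpha/2}(\zeta-H)^{-1}$ is bounded with polynomial dependence on $\zeta$, the Hardy part contributes $\lesssim\|\phi\|$, with no moment on $\phi$ needed.

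\emph{Potential part.} Here $|x_kV(x)|\lesssim\langle x\rangle^{1-\rho}$, and this part is absent when $V=0$. We write
$$
x_kV(\zeta-H)^{-1}v_t=(x_kV\langle x\rangle^{\rho-1})\,\langle x\rangle^{1-\rho}(\zeta-H)^{-1}\langle x\rangle^{\rho-1}\,\langle x\rangle^{1-\rho}v_t .
$$
The first factor is bounded. The middle factor is bounded by Lemma~\ref{lem8} with $\mu=1-\rho\in[0,1)$, with polynomial growth in $\zeta$. For the last factor, Lemma~\ref{lem9} with $\mu=1-\rho$ and $v_t=f(H)\me{-itH}\phi$ gives $\|\langle x\rangle^{1-\rho}v_t\|\lesssim t^{1-\rho}\|\langle x\rangle^{1-\rho}\phi\|$ for $t\geq1$. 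Integrating over $\zeta$ with the decay of $\bar\partial\widetilde g$ yields \eqref{eq:xK}.

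\emph{Main obstacle.} The delicate step is justifying the commutation of the unbounded weight $x_k$ through $(\zeta-P)^{-1}$ and past the singular factor $|x|^{-\alpha}$ on a suitable domain. We plan to handle it as in Lemma~\ref{lem:refined-first-moment}: replace $x_k$ by the cutoff version $x_k\eta(x/L)$, for which every bound above is uniform in $L$, and then let $L\to\infty$ using Fatou's lemma. One must also check that $x_k|x|^{-\alpha}$ is controlled by $\langle D\rangle^{\alpha/2}$ in all dimensions allowed by the hypotheses. This holds because $\alpha<d$ makes the Hardy inequality for $|x|^{-\alpha/2}$ available.
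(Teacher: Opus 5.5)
Most of your argument is sound and follows the paper's structure. This includes the Helffer--Sj\"ostrand representation of $K$, the identity $x_kR_0(\zeta)=R_0(\zeta)x_k+i\alpha R_0(\zeta)^2|D|^{\alpha-2}D_k$, and Lemmas~\ref{lem8} and~\ref{lem9} with $\mu=1-\rho$ for the $x_kV$ term. For the main Hardy term, you use the pointwise bound $|x_k|\,|x|^{-\alpha}\leq 1+|x|^{-\alpha/2}$ together with $R(\zeta)\colon L^2\to H^{\alpha/2}$. This is correct, and simpler than the paper's factorization through the Riesz potential $|D|^{1-\alpha}$.

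The gap is in the commutator term. There you assert that $(\kappa|x|^{-\alpha}+V)(\zeta-H)^{-1}$ is bounded on $L^2$ ``by Hardy's inequality and $\kappa<\kappa_{\rm c}$''. For the Hardy part this is false. Hardy's inequality with $\kappa<\kappa_{\rm c}$ only controls the form domain. It gives boundedness of $|x|^{-\alpha/2}(\zeta-H)^{-1}$, equivalently $|x|^{-\alpha}\colon H^{\alpha/2}\to H^{-\alpha/2}$, but not $|x|^{-\alpha}\colon\D(H)\to L^2$. For example, take $\alpha=2$ and $d=3$, which is admissible since $2>3-d/2$, and $0<\kappa<1/4$. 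Then functions in $\D(H)$ behave like $|x|^{-\sigma}$ at the origin, with $\sigma=\frac12-\sqrt{\frac14-\kappa}>0$. Hence $|x|^{-2}u\notin L^2$ near $0$. The energy cutoff $f(H)$ does not improve the behavior at the origin, so $|x|^{-\alpha}(\zeta-H)^{-1}v_t$ need not even lie in $L^2$.

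The missing idea is that the commutator term carries enough extra smoothing to absorb half of the Hardy weight. Split $|x|^{-\alpha}=|x|^{-\alpha/2}\cdot|x|^{-\alpha/2}$ and write
\[
R_0(\zeta)^2|D|^{\alpha-2}D_k\,|x|^{-\alpha}R(\zeta)v_t=\bigl(R_0(\zeta)^2|D|^{\alpha-2}D_k|D|^{\alpha/2}\bigr)\bigl(|D|^{-\alpha/2}|x|^{-\alpha/2}\bigr)\bigl(|x|^{-\alpha/2}R(\zeta)v_t\bigr).
\]
Each factor is bounded:
\begin{itemize}
\item The first is a bounded Fourier multiplier with polynomial dependence on $\zeta$. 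It has order $-1-\alpha/2$ at high frequencies and is bounded near $\xi=0$ since $\alpha\geq1$.
\item The second is bounded as the adjoint of Hardy's inequality $\||x|^{-\alpha/2}u\|\lesssim\||D|^{\alpha/2}u\|$, which uses $\alpha<d$.
\item The third is bounded since $R(\zeta)$ maps $L^2$ into $H^{\alpha/2}$.
\end{itemize}
This is precisely the paper's factorization, and it gives the $\mathcal O(\|\phi\|)$ bound you wanted.

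The split is also compatible with your cutoff regularization $x_k\eta(x/L)$. By \eqref{eq:coordinate-cutoff-commutator}, $[P,x_k\eta(x/L)]$ maps $H^{\alpha/2}\subset H^{\alpha-1}$ into $L^2$ uniformly in $L$. Meanwhile, $R_0(\zeta)|x|^{-\alpha/2}$ maps $L^2$ into $H^{\alpha/2}$ by the same dual Hardy bound.
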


\begin{proof}
  In the following, we write
  \begin{align*}
    P=|D|^\alpha,
    \qquad
    H=P-\frac{\kappa}{|x|^\alpha}-V.
  \end{align*}
  Recall also the notation
  $$
  K := g(H)-g(P),
  \qquad
  v_t:=f(H)\me{-itH}\phi,
  $$
  where $g=1$ near $\supp f$.
  Writing
  $$
  R_0(\zeta):=(\zeta-P)^{-1}
  \qquad
  R(\zeta):=(\zeta-H)^{-1},
  $$
  we may thus write
  \begin{align}
    \label{eq:K-resolvent}
    K=-\frac{1}{2\pi i}\int_{\C}\bar\partial\widetilde g(\zeta) R_0(\zeta)\left(\frac{\kappa}{|x|^\alpha}+V\right) R(\zeta)\,d\zeta\wedge d\bar\zeta.
  \end{align}
  We use
  $$
  x_kR_0(\zeta) = R_0(\zeta)x_k + i\alpha R_0(\zeta)^2|D|^{\alpha-2}D_k
  $$
  and consider in the following the contributions from $\kappa/|x|^\alpha$ and $V$ separately.
  For the term in which $x_k$ is multiplied with the Hardy potential, we write
  \begin{align}
    \label{eq:xHardy-factor}
    R_0(\zeta)x_k|x|^{-\alpha}R(\zeta)v_t
    & = \left(R_0(\zeta)|D|^{\alpha-1}\right) \left(|D|^{1-\alpha}x_k|x|^{-\alpha}\right) R(\zeta)v_t.
  \end{align}
  The first factor is bounded by polynomials of $|\im\zeta|^{-1}$ and $\langle\zeta\rangle$ since $\alpha-1\leq\alpha/2$. The last factor in~\eqref{eq:xHardy-factor} is bounded using the spectral localization with $f(H)$. Finally, consider the second factor. It is trivially bounded if $\alpha=1$. If $\alpha-1<d/2$, then the Riesz potential $|D|^{1-\alpha}$ has a positive kernel. Hence, for $u\in C_c^\infty$, we have
  $$
  \left| |D|^{1-\alpha}\frac{x_k}{|x|}|x|^{1-\alpha}u\right|
  \leq |D|^{1-\alpha}|x|^{1-\alpha}|u|.
  $$
  Thus, by Hardy's inequality,
  $$
  \left\||D|^{1-\alpha}(x_k/|x|)|x|^{1-\alpha}u\right\|_2 \leq C\|u\|_2,
  $$
  which shows boundedness of the second factor in \eqref{eq:xHardy-factor}.
  
  For the $R_0(\zeta)^2|D|^{\alpha-2}D_k$ term acting on the Hardy potential, we factorize
  \begin{align}
    \label{eq:comm-Hardy-factor}
    \begin{split}
      & R_0(\zeta)^2|D|^{\alpha-2}D_k|x|^{-\alpha}R(\zeta)v_t \\
      & \quad = \left(R_0(\zeta)^2|D|^{\alpha-2}D_k|D|^{\alpha/2}\right) \left(|D|^{-\alpha/2}|x|^{-\alpha/2}\right) \left(|x|^{-\alpha/2}R(\zeta)v_t\right).
    \end{split}
  \end{align}
  The first factor is a bounded Fourier multiplier, the second is bounded by Hardy's inequality, and the third is bounded since $R(\zeta):L^2\to H^{\alpha/2}$ is bounded as $\kappa<\kappa_{\rm c}$. Thus, the contributions from the Hardy potential are $\mathcal{O}(\|\phi\|)$.

  For the term in which $x_k$ is multiplied with \(V\), we use $|x_kV(x)|\lesssim\langle x\rangle^{1-\rho}$ and Lemmas~\ref{lem8} and~\ref{lem9} (recalling $\rho\in(0,1]$) to obtain
  \begin{align*}
    \|R_0(\zeta)x_kV R(\zeta)v_t\|
    \lesssim_\zeta \|\langle x\rangle^{1-\rho}R(\zeta)f(H)\me{-itH}\phi\| 
    \lesssim_\zeta t^{1-\rho} \|\langle x\rangle^{1-\rho}\phi\|
  \end{align*}
  with at most polynomial growth in $|\im\zeta|^{-1}$ and $\langle\zeta\rangle$.
  Finally,
  $$
  R_0(\zeta)^2|D|^{\alpha-2}D_kV R(\zeta)f(H) \in \mathcal B(L^2)
  $$
  since $R_0(\zeta)^2|D|^{\alpha-2}D_k\in\B(L^2)$, $V\in L^\infty$, and $R(\zeta)f(H)\in\B(L^2)$. This contribution is also $\mathcal{O}(\|\phi\|)$.

  Since all of the above bounds have at most polynomial growth in $|\im\zeta|^{-1}$ and $\langle\zeta\rangle$, choosing the almost-analytic extension in \eqref{eq:K-resolvent} of sufficiently high order, the $\zeta$-integral is finite. This concludes the proof of~\eqref{eq:xK}.
\end{proof}

We finally give the

\begin{proof}[Proof of Lemma~\ref{lem:late-start}]
  Let $H=H_\kappa$, $F=f(H)$, $U(t)=\me{-itH}$ and choose $n_0$ sufficiently large that Lemmas~\ref{the7aux2} and~\ref{the7aux3}, together with the estimates for $\ce_{j,n}(t)$, hold uniformly for $n\geq n_0$. Then, for fixed $n$,
  $$
  t\mapsto Q_n(t)=\langle FU(t)\phi,X_n(B_t)FU(t)\phi\rangle
  $$
  is continuously differentiable for every $\phi\in L^2$. Indeed, $HF$ is bounded, and $t\mapsto X_n(B_t)$ is continuously differentiable in operator norm on $(0,\infty)$ with $\partial_tX_n(B_t)=-2t^{-1} B_t\chi_n(B_t)^2$.
  Thus, $Q_n'(t)=\langle U(t)\phi,\mathbb D_{H}\Phi_n(t)U(t)\phi\rangle$ is the Heisenberg derivative considered in Lemmas~\ref{the7aux2} and~\ref{the7aux3} in the sense of quadratic forms.
  By Lemmas~\ref{boundr12} and~\ref{boundr3},
  $$
  \sum_{j=1}^3 |\langle U(t)\phi,\mathcal E_{j,n}(t)U(t)\phi\rangle|
  \lesssim R_n^{-2}t^{-\beta}\|\langle x\rangle^2\phi\|^2 + t^{-2}\|\phi\|^2.
  $$
  For $\phi\in\D(\langle x\rangle^2)$, these bounds and Lemmas~\ref{the7aux2} and~\ref{the7aux3} imply
  \begin{align}
    \label{eq:differential-ineq-dyadic}
    Q_n'(t) \leq \, -\frac{cR_n^2}{t}\|\chi_n(B_t)FU(t)\phi\|^2 + \frac C{t^2}\|\phi\|^2 + CR_n^{-2}t^{-\beta}\|\langle x\rangle^2\phi\|^2.
  \end{align}
  We now integrate \eqref{eq:differential-ineq-dyadic} over $t\in[S,T]$. Since $Q_n(T)\geq0$ and $\beta>1$, letting $T\to\infty$ yields
  \begin{align}
    \label{eq:shell-integral-pre}
    R_n^2\int_S^\infty \|\chi_n(B_t)FU(t)\phi\|^2\frac{dt}{t}
    \lesssim Q_n(S)+S^{-1}\|\phi\|^2 + R_n^{-2}S^{1-\beta}\|\langle x\rangle^2\phi\|^2.
  \end{align}
  The mutually disjoint shells $\{R_n\leq|x|/t<R_{n+1}\}$ cover $\{|x|/t\geq R_{n_0}\}$. As $\chi_n(B_t)=1$ on the corresponding shell, Tonelli's theorem and \eqref{eq:shell-integral-pre} yield
  \begin{align*}
    \int_S^\infty \|\one_{\{|x|/t\geq R_{n_0}\}}FU(t)\phi\|^2\frac{dt}{t}
    \lesssim \sum_{n\geq n_0}R_n^{-2} Q_n(S) + S^{-1}\|\phi\|^2 + S^{1-\beta}\|\langle x\rangle^2\phi\|^2.
  \end{align*}
  Here, we already used $\sum_{n\geq n_0}R_n^{-2}\lesssim1$ for the second and third summands. Since $X_n(s)\leq s$ for $s\geq0$,
  $$
  \sum_{n\geq n_0}R_n^{-2}Q_n(S)
  \lesssim S^{-2} \|xFU(S)\phi\|^2
  \lesssim S^{-2}\|\langle x\rangle\phi\|^2 + \|\phi\|^2,
  $$
  where the second inequality follows from Lemma~\ref{lem:refined-first-moment}. This proves \eqref{eq:late-start-bound} for $\Theta_0=R_{n_0}$, and hence for every $\Theta\geq\Theta_0$ by monotonicity of the exterior projections.
\end{proof}

%%%%%%%%%%%%%%%%%%%%%%%%%%%%%%%%%%%%%%%%
%%%%%%%%%%%%%%%%%%%%%%%%%%%%%%%%%%%%%%%%

\section{Proof of (\ref{eq:the7cor}) in Theorem~\ref{the7}}
\label{s:proofthe7cor}

The proof is inspired by that of~\cite[Theorem~2.34]{Isozaki2023}.
As before, we may and do without loss of generality assume that $f\in C_c^\infty(0,\infty)$ is real-valued. We write $H=H_\kappa$, $P=|D|^\alpha$, $F=f(H)$, and $U(t)=\me{-itH}$. We first prove the conclusion for $\phi\in\D(\langle x\rangle^\epsilon)$ for given fixed $\epsilon>0$.
Fix an admissible $\Theta_{0}>0$ for \eqref{eq:the7} in Theorem~\ref{the7} to hold, and set $\Theta_{1}=2\Theta_{0}$. Choose a nondecreasing function $\chi\in C^\infty(\R;[0,1])$ with
$$
\chi(s)=0\quad \text{for } s\leq\Theta_{0}^2,
\qquad
\chi(s)=1\quad \text{for } s\geq\Theta_{1}^2.
$$
For $t\geq1$, set
$$
B_t=\frac{|x|^2}{t^2},
\qquad
M_t=\chi(B_t),
\qquad
g(t)=\|M_t^{1/2}FU(t)\phi\|^2.
$$
We first prove \eqref{eq:the7cor} for $\phi\in\D(\langle x\rangle^\epsilon)$ and at the end extend it by a density argument to all $\phi\in L^2(\R^d)$.

\medskip
\noindent
{\bf Step 1: Reduction to an estimate for $\|g'\|_\infty$.}
Since
$$
0\leq g(t) \leq \left\|\one_{\{|x|/t\geq\Theta_{0}\}} FU(t)\phi\right\|^2,
$$
the time-integrated maximal velocity estimate \eqref{eq:the7} in Theorem~\ref{the7} yields
\begin{align}
  \label{eq:integrated-maximal-velocity-apriori}
  \int_1^\infty g(t)\,\frac{dt}{t}<\infty \quad\text{for } \phi\in\D(\langle x\rangle^\epsilon).
\end{align}
Since $\chi(s)=1$ for $s\geq\Theta_{1}^2$, every $\Theta\geq\Theta_{1}$ satisfies
$$
\left\|\one_{\{|x|/t\geq\Theta\}}FU(t)\phi\right\|^2 \leq g(t).
$$
Hence, it remains to show $g(t)\to0$ as $t\to\infty$. To this end, define $G(\tau)=g(\me{\tau})$ for $\tau\geq0$. Then $G\geq0$ and $G\in L^1([0,\infty))$. To estimate $G(\tau)$, we use for every $\delta>0$,
\begin{align}
  \label{eq:G-bound-integral-derivative}
  G(\tau) \leq \frac{1}{\delta}\int_\tau^{\tau+\delta}G(s)\,ds + \frac{\delta}{2}\|G'\|_\infty
\end{align}
as a consequence of the mean value theorem. We claim that $g$ is continuously differentiable and
\begin{equation}
  \label{eq:pointwise-time-derivative-bound}
  |g'(t)|\leq \frac{C_{f,\chi}}{t}\|\phi\|^2, \quad t\geq1.
\end{equation}
Once we have shown this, we can in \eqref{eq:G-bound-integral-derivative} first let $\tau\to\infty$, using \eqref{eq:integrated-maximal-velocity-apriori}, and then $\delta\searrow0$, to get $\lim_{\tau\to\infty}G(\tau)=0$, and hence $\lim_{t\to\infty}g(t)=0$.

\medskip
\noindent
{\bf Step 2: Proof of \eqref{eq:pointwise-time-derivative-bound}.}
In the following, we justify the Heisenberg derivative
\begin{align}
  \label{eq:g-pointwise-derivative}
  \frac{d}{dt} \left(U(t)^* F M_t F U(t)\right)
  = U(t)^* \left(F \, \frac{dM_t}{dt}\, F + i F[P,M_t]F \right)U(t),
\end{align}
and estimate its right-hand side.
Since $\kappa<\kappa_{\rm c}$ and $V$ is bounded, there is $c>0$ such that $H+c\geq1$ and $\D((H+c)^{1/2})=H^{\alpha/2}(\R^d)$. Since $f$ is compactly supported and $\alpha-1\leq\alpha/2$, it follows that $HF$ is bounded on $L^2$ and
\begin{align}
  \label{eq:pointwise-spectral-regularity}
  \|Fu\|_{H^{\alpha-1}}\leq C_f\|u\|, \quad u\in L^2(\R^d).
\end{align}
We first estimate the $[P,M_t]$ term. To this end, we write $M_t=1+a_t$, where $a_t(x)=a(x/t)$ with
$$
a(x)=\chi(|x|^2)-1
$$
being in $C_c^\infty(\R^d)$. For $1\leq\alpha\leq2$, the bound \eqref{eq:P-commutator} and the Fourier representation of the commutator yield, by Young's inequality,
\begin{align*}
  \|[P,a_t]u\|
  \lesssim \left\| |\zeta|\widehat a_t(\zeta)\right\|_{L^1} \|u\|_{H^{\alpha-1}} + \left\||\zeta|^\alpha\widehat a_t(\zeta)\right\|_{L^1} \|u\| 
  \lesssim t^{-1}\|u\|_{H^{\alpha-1}} + t^{-\alpha}\|u\|.
\end{align*}
Here we used $\widehat a_t(\zeta)=t^d\widehat a(t\zeta)$. Thus, \eqref{eq:pointwise-spectral-regularity} implies
\begin{align}
  \label{eq:pointwise-commutator-bound}
  \|[P,a_t]F\| \leq C_{f,\chi}t^{-1},\quad t\geq1.
\end{align}
Therefore, $F[P,a_t]F$ extends to a bounded operator with
$$
\|F[P,a_t]F\| \leq C_{f,\chi}t^{-1}, \quad t \geq1.
$$
Moreover, $t\mapsto M_t$ is continuously differentiable in operator norm on $(0,\infty)$, with
$$
\frac{d}{dt} M_t=-\frac{2}{t}B_t \chi'(B_t),
\qquad
\left\|\frac{d}{dt} M_t\right\| \leq \frac{2}{t}\sup_{s\in\R}|s\chi'(s)|.
$$
Since $f$ has compact support, $t\mapsto FU(t)$ is also continuously differentiable in operator norm. This shows~\eqref{eq:g-pointwise-derivative} and the previous estimates yield \eqref{eq:pointwise-time-derivative-bound}.

\medskip
\noindent
{\bf Step 3: Extension from $\D(\langle x\rangle^\epsilon)$ to $L^2(\R^d)$.}
Finally, let $\phi\in L^2(\R^d)$ be arbitrary and choose $\phi_n\in\D(\langle x\rangle^\epsilon)$ with $\phi_n\to\phi$ in $L^2$. The uniform bound
$$
\left\|\one_{\{|x|/t\geq\Theta\}}FU(t)\right\| \leq\|f\|_\infty
$$
and the conclusion already proved for $\phi_n$ imply
$$
\limsup_{t\to\infty} \left\|\one_{\{|x|/t\geq\Theta\}}FU(t)\phi\right\| \leq\|f\|_\infty\|\phi-\phi_n\|.
$$
Letting $n\to\infty$ concludes the proof of \eqref{eq:the7cor}.
\qed

%%%%%%%%%%%%%%%%%%%%%%%%%%%%%%%%%%%%%%%%
%%%%%%%%%%%%%%%%%%%%%%%%%%%%%%%%%%%%%%%%

\section{Proof of Theorem~\ref{asympcomplhardyplusperturbationnonexistence}}
\label{s:proofnonexistence}

We prove Theorem~\ref{asympcomplhardyplusperturbationnonexistence} via contradiction. That is, we assume that the wave operator $W(H_\kappa,L_\kappa)$ exists. To arrive at a contradiction, we first establish the existence of suitable test functions. The following lemma is stronger than necessary, but shows the usefulness of Lemma~\ref{lem8}.

\begin{lemma}
  \label{lem:wave-test-vector}
  Let $\alpha\in[1,2]\cap(0,d)$ and $\kappa<\kappa_{\rm c}$. If $d=2$, assume $\alpha>1$.
  Let $I=[a,b]$ with some $0<a<b$. Then, there exist $0\ne\phi\in L^2(\R^d)$ and $f\in C_c^\infty(I)$ such that
  $$
  \phi = f(L_\kappa)\phi,\qquad
  \phi\in\D(\langle x\rangle^2)\cap\D(\langle A\rangle^2).
  $$
\end{lemma}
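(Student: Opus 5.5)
We construct $\phi$ by applying a spectral cutoff to a nice function. Fix $f\in C_c^\infty((a,b))$ real-valued and not identically zero, and choose $f_1\in C_c^\infty((a,b))$ with $f_1=1$ on a neighborhood of $\supp f$. We set $\phi:=f(L_\kappa)\psi$ with $\psi\in C_c^\infty(\R^d\setminus\{0\})$. Then $f_1(L_\kappa)\phi=\phi$, so $f_1$ plays the role of the function in the statement (renaming $f_1$ as $f$).

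\textbf{Nontriviality.} We need $\psi$ with $f(L_\kappa)\psi\neq0$. The operator $L_\kappa$ with $\kappa<\kappa_{\rm c}$ is homogeneous of degree $\alpha$ under dilations and has purely absolutely continuous spectrum $[0,\infty)$. Hence the spectral projection $\one_{J}(L_\kappa)$ is nonzero for any open $J\subset(a,b)$ on which $f\neq0$. Since $C_c^\infty(\R^d\setminus\{0\})$ is dense in $L^2$, some $\psi$ has $f(L_\kappa)\psi\neq0$.

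\textbf{$\phi\in\D(\langle x\rangle^2)$.} Here we use Lemma~\ref{lem8} with $V=0$ and $\mu=2$. This is allowed since $\alpha\geq1$, $d\geq3$ or $d=2$ with $\alpha>1$, and $d=1$ is excluded by $\alpha<d$. We write $f(L_\kappa)$ through the Helffer--Sj\"ostrand formula \eqref{eq:hs}:
\[
\langle x\rangle^2 f(L_\kappa)\psi=\frac1{2\pi i}\int_\C\bar\partial\tilde f(z)\,\langle x\rangle^2(z-L_\kappa)^{-1}\langle x\rangle^{-2}\,\langle x\rangle^2\psi\,dz\wedge d\bar z .
\]
Lemma~\ref{lem8} bounds the weighted resolvent by $Z(z,2)$, which grows polynomially in $|\im z|^{-1}$ and $\langle z\rangle$. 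Choosing the almost analytic extension of high enough order makes the integral converge absolutely, because $\langle x\rangle^2\psi\in L^2$. A closedness argument, for instance with cutoff weights $\langle x\rangle^2/(1+\epsilon|x|^2)$ and Fatou's lemma, then gives $\phi\in\D(\langle x\rangle^2)$.

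\textbf{$\phi\in\D(\langle A\rangle^2)$.} Let $\delta_\lambda$ be the unitary dilation $(\delta_\lambda u)(x)=\lambda^{d/2}u(\lambda x)$, and let $W(s)=\me{isA}$ be the group it generates ($\lambda=\me{s}$). By homogeneity, $W(s)^* L_\kappa W(s)=\me{\alpha s}L_\kappa$, so
\[
W(s)^*f(L_\kappa)W(s)=f(\me{\alpha s}L_\kappa),
\]
which is smooth in $s$ in norm on compact sets. Formally, $[A,f(L_\kappa)]=i\alpha L_\kappa f'(L_\kappa)$ and $\ad_A^2 f(L_\kappa)$ is of the same form with $f$ replaced by $\alpha^2 s\,(sf')'(s)$ evaluated at $L_\kappa$; both are bounded. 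Hence
\[
A^2\phi=f(L_\kappa)A^2\psi+2[A,f(L_\kappa)]A\psi+[A,[A,f(L_\kappa)]]\psi .
\]
Each term lies in $L^2$ because $\psi\in C_c^\infty(\R^d\setminus\{0\})\subset\D(A^2)$.

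The rigorous justification is that $\R\ni s\mapsto W(s)\phi=f(\me{-\alpha s}L_\kappa)W(s)\psi$ (up to sign conventions) is $C^2$ in $L^2$. Both factors are $C^2$: the first in norm, the second strongly. This gives $\phi\in\D(A^2)$.

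\textbf{Main obstacle.} The main obstacle is the $\langle x\rangle^2$ moment. It relies entirely on the $\mu=2$ case of Lemma~\ref{lem8}, which is exactly where the extra assumptions on $(d,\alpha)$ enter, and on making the Helffer--Sj\"ostrand integral converge against the polynomial growth of $Z(z,2)$. The dilation part is routine.
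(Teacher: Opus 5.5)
Your proposal is correct and follows essentially the same route as the paper. You take a spectral cutoff of some $\psi\in C_c^\infty(\R^d\setminus\{0\})$, get nontriviality from the spectrum $[0,\infty)$ of $L_\kappa$ plus density, obtain $\D(\langle x\rangle^2)$ from Helffer--Sj\"ostrand combined with Lemma~\ref{lem8} at $\mu=2$, and obtain $\D(A^2)$ from the double-commutator expansion. Your explicit use of the dilation identity $W(s)^*f(L_\kappa)W(s)=f(\me{\alpha s}L_\kappa)$ is simply a concrete justification of the paper's statement that $(z-L_\kappa)^{-1}\in C^\infty(A)$.
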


\begin{proof}
  Choose $f_0,f\in C_c^\infty(I)$ with $f=1$ near $\supp f_0$ and $f_0\not\equiv0$. Since $L_\kappa$ has purely absolutely continuous spectrum $[0,\infty)$ (as a consequence of Mourre theory and the virial theorem; see, e.g., \cite{IshidaMerz2026}), we have $f_0(L_\kappa)\ne0$. Since $C_c^\infty(\R^d\setminus\{0\})$ is dense in $L^2$, choose $\psi\in C_c^\infty(\R^d\setminus\{0\})$ with $\phi:=f_0(L_\kappa)\psi\ne0$. Then $f(L_\kappa)\phi=\phi$. By the Helffer--Sj\"ostrand formula and Lemma~\ref{lem8} with weights at most two, we have $\phi\in\D(\langle x\rangle^2)$. By $i[L_\kappa,A]=\alpha L_\kappa$, we have $(L_\kappa-z)^{-1}\in C^\infty(A)$ with $z\notin\R$. Hence, expanding $A^2\phi=A^2 f_0(L_\kappa)\psi$ in commutators, using the boundedness of
  $$
  [A,f_0(L_\kappa)]=i\alpha Lf_0'(L_\kappa),
  \qquad
  [A,[A,f_0(L_\kappa)]]=-\alpha^2\left(L_\kappa f_0'(L_\kappa) + L_\kappa^2 f_0''(L_\kappa)\right),
  $$
  and $\psi\in\D(A^2)$ shows $\phi\in\D(A^2)$.
\end{proof}

\begin{proof}[Proof of Theorem~\ref{asympcomplhardyplusperturbationnonexistence}]
  Write $H_\kappa=L_\kappa-V^L$ and recall that $P_{\rm ac}(L_\kappa)$ is the identity.
  Suppose, for contradiction, that
  $$
  W = \slim_{t\to\infty}\me{itH_\kappa}\me{-itL_\kappa}
  $$
  exists. Choose the nonzero $\phi$ and $f$ from Lemma~\ref{lem:wave-test-vector}, and set $u_t=\me{-itL_\kappa}\phi$. Choose $\lambda$ so large that
  $$
  R_\lambda = (\lambda+H_\kappa)^{-1},
  \qquad
  0 < R_\lambda\leq\mu^{-1},
  \qquad
  \mu>K_2^2/K_1.
  $$
  For $0<t_1<t_2$, Duhamel's formula and the intertwining property yield
  \begin{align}
    \label{eq:wave-Duhamel}
    \langle W\phi, (\me{it_2H_\kappa}\me{-it_2L_\kappa}-\me{it_1H_\kappa}\me{-it_1L_\kappa})\phi\rangle = I_1(t_1,t_2)+I_2(t_1,t_2),
  \end{align}
  where
  \begin{align}
    \label{eq:wave-I1}
    & I_1 = -i\int_{t_1}^{t_2} \langle u_t, (\lambda+L_\kappa)R_\lambda V^Lu_t\rangle\,dt, \\
    \label{eq:wave-I2}
    & I_2 = -i\int_{t_1}^{t_2} \langle \me{-itH_\kappa}(W-\me{itH_\kappa}\me{-itL_\kappa})(\lambda+L_\kappa)\phi, R_\lambda V^L u_t\rangle\,dt.
  \end{align}
  We now estimate $|I_1|$ from below and $|I_2|$ from above, and begin with $I_1$.
  By the resolvent formula,
  \begin{align}
    \label{eq:wave-positive-resolvent}
    \langle u_t,(\lambda+L_\kappa)R_\lambda V^Lu_t\rangle
    = \langle u_t, V^Lu_t\rangle + \langle V^Lu_t, R_\lambda V^Lu_t\rangle.
  \end{align}
  If $V^L\geq K_1\langle x\rangle^{-1}$, both terms have the required positive sign. If $V^L\leq-K_1\langle x\rangle^{-1}$, the absolute value of the second term is bounded from above by
  $$
  \mu^{-1}K_2^2\langle u_t, \langle x\rangle^{-2}u_t\rangle
  \leq \mu^{-1}K_2^2\langle u_t, \langle x\rangle^{-1}u_t\rangle.
  $$
  Since $\mu>K_2^2/K_1$, the integrand in \eqref{eq:wave-I1} has a fixed sign and
  \begin{align}
    \label{eq:I1-first-lower}
    |I_1| \geq \left(K_1-\frac{K_2^2}{\mu}\right) \int_{t_1}^{t_2} \langle u_t, \langle x\rangle^{-1}u_t\rangle\,dt.
  \end{align}
  Choose $\Theta\geq1$ at least as large as the threshold in the maximal velocity estimate \eqref{eq:the7} in Theorem~\ref{the7} for $L_\kappa$ and $f$, and set $P_t=\one\left(|x|<\Theta t\right)$. Since $\langle x\rangle^{-1}\geq(\sqrt2\Theta t)^{-1}$ on $\ran P_t$ and $\|u_t\|=\|\phi\|$,
  $$
  \langle u_t, \langle x\rangle^{-1}u_t\rangle
  \geq \frac{1}{\sqrt2\Theta t} \|P_tu_t\|^2
  = \frac{1}{\sqrt2\Theta t}\left(\|\phi\|^2 - \|(1-P_t)u_t\|^2\right).
  $$
  By Theorem~\ref{the7},
  $$
  \int_1^\infty \|(1-P_t)u_t\|^2\,\frac{dt}{t}
  \leq C \|\langle x\rangle^2\phi\|^2.
  $$
  Thus, with $c_0=\frac{1}{\sqrt2\Theta} \left(K_1-\frac{K_2^2}{\mu}\right)>0$,
  \begin{align}
    \label{eq:I1-final}
    |I_1| \geq c_0\|\phi\|^2 \log(t_2/t_1) - C_0\|\langle x\rangle^2\phi\|^2.
  \end{align}

  \smallskip
  We now estimate $I_2$. For every $\epsilon>0$, the assumed existence of $W$ provides $t_1(\epsilon)$ such that, for $t\geq t_1$,
  $$
  \| (W-\me{itH_\kappa}\me{-itL_\kappa})(\lambda+L_\kappa)\phi\|
  \leq \epsilon \|(\lambda+L_\kappa)\phi\|.
  $$
  Let $\chi\in C^\infty(\R;[0,1])$ be the cutoff in Lemma~\ref{lem:commutators-spectral-cutoff}, i.e., $\chi(s)=0$ for $s\leq\theta/2$ and $\chi(s)=1$ for $s\geq\theta$. Splitting $1=\left(1-\chi(A/t)\right)+\chi(A/t)$, the minimal velocity estimate (Theorem~\ref{the6} with $V=0$) with $\nu=2$, $\|(1-\chi(A/t))u\|\leq\|\one\left(\frac{A}{t}\leq\theta\right)u\|$, and $\phi\in\D(\langle A\rangle^2)$ give
  \begin{align}
    \label{eq:I2-low-A}
    \int_{t_1}^{t_2} \|R_\lambda V^L(1-\chi(A/t))u_t\|\,dt
    \leq C \|\langle A\rangle^2\phi\|.
  \end{align}
  For the $\chi(A/t)$ term, Lemma~\ref{lem:commutators-spectral-cutoff} gives
  \begin{align*}
    \|R_\lambda V^L \chi(A/t)u_t\|
    & \leq \|\chi(A/t)R_\lambda V^L u_t\| + C t^{-1} \|\phi\| \\
    & \leq  \frac{2}{\theta t} \|AR_\lambda V^L f(L_\kappa)u_t\| + Ct^{-1} \|\phi\|
      \leq Ct^{-1} \|\phi\|.
  \end{align*}
  In the last line we used $u_t=f(L_\kappa)u_t$ and \eqref{eq:ARVf}. Therefore,
  \begin{align}
    \label{eq:I2-final}
    |I_2| \leq \epsilon C\|(\lambda+L_\kappa)\phi\| \left( \|\langle A\rangle^2\phi\| + \|\phi\| \log(t_2/t_1) \right).
  \end{align}
  The left side of \eqref{eq:wave-Duhamel} is bounded uniformly in $t_1,t_2$. Choose $\epsilon>0$ so small that
  $$
  c_0 \|\phi\|^2 - \epsilon C\|(\lambda+L_\kappa)\phi\| \|\phi\|>0.
  $$
  Then \eqref{eq:I1-final} and \eqref{eq:I2-final} force the right side of \eqref{eq:wave-Duhamel} to tend to infinity as \(t_2\to\infty\), a contradiction. Hence, $W(H_\kappa,L_\kappa)$ does not exist.
\end{proof}

%%%%%%%%%%%%%%%%%%%%%%%%%%%%%%%%%%%%%%%% 
%%%%%%%%%%%%%%%%%%%%%%%%%%%%%%%%%%%%%%%%

\appendix
\section{Necessity of a logarithmic moment in (\ref{eq:the7})}
\label{s:sharp}

We show that \eqref{eq:the7} cannot hold with a weight $\log(2+\langle x\rangle)^\gamma$, $\gamma\in[0,1/2)$. To see this, let $\kappa=V=0$, $0\neq f\in C_c^\infty(0,\infty)$, and choose $\psi$ with $\hat\psi \in C_c^\infty(\R^d)$ supported away from the origin such that $v:=f(L_0)\psi\neq0$. Then, with $m:=\|v\|>0$, $A:=\|xv\|$, and $B:=\alpha\||D|^{\alpha-2}Dv\|$,
$$
\|x\me{-itL_0}v\| \leq A+Bt, \quad t\geq0.
$$
For $R>0$, let $\psi_R(x)=\psi(x-Re_1)$ with $e_1$ denoting the first canonical unit vector in $\R^d$. By translation invariance, $f(L_0)\me{-itL_0}\psi_R(x)=(\me{-itL_0}v)(x-Re_1)$. If $|x|\leq\Theta t$ and $\Theta t<R$, then $|x-Re_1|\geq R-\Theta t$. Thus,
$$
\|\one_{\{|x|\leq\Theta t\}}f(L_0)\me{-itL_0}\psi_R\|
\leq \frac{A+Bt}{R-\Theta t}.
$$
Choose $c>0$ such that $\Theta c\leq1/2$ and $2Bc\leq m/4$. For $R$ large enough that $2A/R\leq m/4$, the last expression is at most $m/2$ for $1\leq t\leq cR$. Thus, by orthogonality,
$$
\int_1^{cR} \|\one_{\{|x|/t\geq\Theta\}}f(L_0)\me{-itL_0}\psi_R\|^2\,\frac{dt}{t}
\geq \frac{3m^2}{4}\log(cR).
$$
Since $\|\log(2+\langle x\rangle)^\gamma\psi_R\|^2 \lesssim (\log R)^{2\gamma}$, this shows that \eqref{eq:the7} cannot hold for $\gamma\in[0,1/2)$.

In fact, there exists $\phi\in L^2(\R^d)$ for which the left-hand side of \eqref{eq:the7} is infinite: consider
$$
(T_N\phi)(t):=\one_{[1,N]}(t)\one_{\{|x|/t\geq\Theta\}}f(L_0)\me{-itL_0}\phi
$$
as bounded operators from $L^2(\R^d)$ into $L^2([1,\infty),dt/t;L^2(\R^d))$. If the full $t$-integral was finite for every $\phi$, then $\sup_N\|T_N\phi\|<\infty$ for all $\phi$. Thus, by the uniform boundedness principle, we would obtain $\sup_N\|T_N\|<\infty$. Taking $R=N/c$ in the preceding lower bound contradicts this conclusion as $N\to\infty$.

%%%%%%%%%%%%%%%%%%%%%%%%%%%%%%%%%%%%%%%%
%%%%%%%%%%%%%%%%%%%%%%%%%%%%%%%%%%%%%%%%

\section*{Acknowledgments}

A.~I.~was supported by JSPS KAKENHI Grant Numbers JP21K03279, JP26K06834.
K.~M.~thanks Katsushika Division, Institute of Arts and Sciences of Tokyo University of Science for their hospitality during a stay in April and May 2026, when part of this work was carried out.
We are grateful to Gian Michele Graf for fruitful discussions.
The authors acknowledge the use of AI tools for the final proofreading of the manuscript. All mathematical arguments and proofs in the final manuscript were checked and written by the authors.

% \bibliographystyle{alpha}
% \bibliography{coulomb}

\def\cprime{$'$}

\end{document}